\newcommand{\isom}{\cong}
\newcommand{\FF}{\mathbb{F}}
\newcommand{\PP}{\mathbb{P}}
\newcommand{\ZZ}{\mathbf{Z}}
\newcommand{\oO}{\mathrm{O}}
\newcommand{\cK}{\mathscr{K}}
\newcommand{\bbmu}{\mu\!\!\!\!\mu}
\newcommand{\Mul}{\mathbf{M}}
\newcommand{\mul}{\mathbf{m}}
\newcommand{\Sqr}{\mathbf{S}}
\newcommand{\wx}{\omega}
\newcommand{\wy}{\overline{\wx}}
\newcommand{\ccomma}{\raisebox{0.4ex}{,}}
\newcommand{\ignore}[1]{}
\newcommand{\exclude}[1]{}
\title{
Efficient arithmetic on elliptic curves\\in characteristic 2}
\author{David Kohel}
\institute{
Institut de Math{\'e}matiques de Luminy\\
Universit{\'e} d'Aix-Marseille\\
163, avenue de Luminy, Case 907\\
13288 Marseille Cedex 9\\ France}
\begin{document}

\maketitle

\begin{abstract}
We present normal forms for elliptic curves over a field of characteristic~$2$ 
analogous to Edwards normal form, and determine bases of addition laws, which 
provide strikingly simple expressions for the group law.  
We deduce efficient algorithms for point addition and scalar multiplication 
on these forms.  The resulting algorithms apply to any elliptic curve over a 
field of characteristic~2 with a 4-torsion point, via 
an isomorphism with one of the normal forms.
We deduce algorithms for 
duplication in time $2\Mul + 5\Sqr + 2\mul_c$  
and for addition of points in time $7\Mul + 2\Sqr$, 
where $\Mul$ is the cost of multiplication, $\Sqr$ the cost of squaring, and 
$\mul_c$ the cost of multiplication by a constant. 
By a study of the Kummer curves $\cK = E/\{[\pm1]\}$, we develop 
an algorithm for scalar multiplication with point recovery 
which computes the multiple of a point $P$ with 
$4\Mul + 4\Sqr + 2\mul_c + \mul_t$ per bit where $\mul_t$ is 
multiplication by a constant that depends on $P$.
\end{abstract}

\section{Introduction}

The last five years have seen significant improvements in the efficiency 
of known algorithms for arithmetic on elliptic curves, spurred by the 
introduction of the Edwards model~\cite{Edwards} and its 
analysis~\cite{BernsteinLange-Edwards,BernsteinEtAl-TwistedEdwards,Hisil-EdwardsRevisited}.  
Previously, it had been recognized that alternative models of elliptic 
curves could admit efficient arithmetic~\cite{ChudnovskyBrothers}, 
but the fastest algorithms could be represented in terms of functions 
on elliptic curves embedded in $\PP^2$ as Weierstrass models.  

Among the best alternative models one finds a common property of symmetry.
They admit a large number of (projective) linear automorphisms, often 
given by signed or scaled coordinate permutations.  An elliptic curve with 
$j$-invariant $j \ne 0, 12^3$ admits only $\{[\pm1]\}$ as automorphism group 
{\it fixing the identity element}.
However, as a genus 1 curve, it also admits translations by rational points, 
and a translation morphism $\tau_Q(P) = P + Q$ on $E$ is projectively linear,
i.e.\ induced by a linear transformation of the ambient projective space, 
if and only if $E$ is a degree $n$ model determined by a complete linear 
system in $\PP^{n-1}$ and $Q$ is in the $n$-torsion subgroup.  As a consequence 
the principal models of cryptographic interest are elliptic curves in $\PP^2$ 
with rational $3$-torsion points (e.g.\ the Hessian models) and in $\PP^3$ with 
$2$-torsion or $4$-torsion points (e.g.\ the Jacobi quadratic intersections 
and Edwards model), and unfortunately, the latter models do not have good 
reduction to characteristic $2$.  The present work aims to fill this gap.

A rough combinatorial explanation for the role of symmetry in efficiency 
is the following.  Suppose that the sum of $x = (x_0:\dots:x_r)$ and 
$y = (y_0:\dots:y_r)$  is expressed by polynomials $(p_0(x,y):\dots:p_r(x,y))$ 
of low bidegree, say $(2,2)$, in $x_i$ and $y_j$.  Such polynomials form a 
finite dimensional space. A translation morphism $\tau$ given by scaled 
coordinate transformation on $E$ determines a new tuple
$
(p_0(\tau(x),\tau^{-1}(y)):\dots,p_r(\tau(x),\tau^{-1}(y))).
$
If $(p_0(x,y):\dots:p_r(x,y))$ is an eigenvector for this transformation
then it tends to have few monomials.  In the case of Hessian, Jacobi, 
Edwards, and similar models, there exist bases of eigenvector polynomial 
addition laws such that the $p_j$ achieve the minimal value of two terms. 

Section~\ref{sec:Symmetries} recalls several results, observations,
and conclusions of Kohel~\cite{Kohel} on symmetries of elliptic curves 
in their embeddings.
As illustration, Section~\ref{sec:EdwardsModel} recalls the main 
properties of the Edwards model as introduced by Edwards~\cite{Edwards}, 
reformulated by Bernstein and Lange~\cite{BernsteinLange-Edwards} 
with twists by Bernstein et al.~\cite{BernsteinEtAl-TwistedEdwards}, 
and properties of its arithmetic described in Hisil et 
al.~\cite{Hisil-EdwardsRevisited} and Bernstein and 
Lange~\cite{BernsteinLange-Completed}.

This background motivates the introduction and classification of 
new models for elliptic curves in Section~\ref{sec:AxiomsD4}, 
based on imitation of the desired properties of Edwards curves, and 
in Section~\ref{sec:NormalForms} we present new elliptic curve models,  
the $\ZZ/4\ZZ$-normal form and the split $\bbmu_4$-normal form, 
which satisfy these properties.  
In Section~\ref{sec:Isomorphisms} we classify all symmetric quartic 
elliptic curves in $\PP^3$ with a rational $4$-torsion point, 
up to projective linear isomorphism.\!\footnote{%
Note that any quartic plane model has a canonical extension to a 
nonsingular quartic model in $\PP^3$ by extending to a complete linear system.}  
In particular we prove that any such curve is linearly isomorphic 
to one of these two models. In Section~\ref{sec:AdditionLaws} we determine 
the polynomial addition laws and resulting complexity for arithmetic 
on these forms.  Finally Section~\ref{sec:KummerCurves} develops 
models for the Kummer curve $\cK = E/\{[\pm1]\}$ and exploits an 
embedding of $E$ in $\cK^2$ in order to develop a Montgomery ladder 
for scalar multiplication with point recovery.  
Section~\ref{sec:Conclusion} summarizes the new complexity results 
for these models in comparison with previously known models and 
algorithms.  An appendix gives the addition laws for a descended 
$\bbmu_4$-normal form that allows us to save on multiplications by 
constants involved in the curve equation.

\vspace{2mm}
\noindent{\bf Notation.}

In what follows we use $\Mul$ and $\Sqr$ for the complexity of multiplication 
and squaring, respectively, in the field $k$, and $\mul_c$ for a multiplication 
by a fixed (possibly small) constant $c$ (or constants $c_i$).\!\footnote{%
When the small constant is a bounded power of a fixed constant we omit 
the squarings or products entailed in its construction and continue to 
consider $c^{O(1)}$ a fixed constant.}  
For the purposes of complexity analysis we ignore field additions.

When describing a morphism $\varphi:X \rightarrow Y$ given by polynomial maps, 
we write 
$$
\varphi(x) = 
\left\{
\begin{array}{c}
\big(p_{1,0}(x):\dots:p_{1,n}(x)\big),\\
\vdots \\
\big(p_{m,0}(x):\dots:p_{m,n}(x)\big),\\
\end{array}
\right.
$$
to indicate that each of the tuples of polynomials 
$(p_{i,0}(x),\dots,p_{i,n}(x))$ defines the morphism 
on an open neighborhood $U_i \subset X$, namely on 
the complement of the common zeros $p_{i,0}(x) 
= \cdots p_{i,m}(x) = 0$, that any two agree on 
the intersections $U_i \cap U_j$, and that the 
union of the $U_i$ is all of $X$. 

For the projective coordinate functions on $\PP^r$, 
with $r > 3$, we use $X_i$ and so $x = (X_0:\dots:X_r)$ 
represents a generic point.
We also use $X_i$ for their restriction to a curve $E$, 
in which case the $X_i$ are defined modulo the defining 
ideal of $E$.  In the product $\PP^r \times \PP^r$, we 
continue to write $x$ for the first coordinate and use 
$(x,y)$ for a generic point in $\PP^r \times \PP^r$, 
where $y = (Y_0:\dots:Y_r)$.

\section{Elliptic curves with symmetries}
\label{sec:Symmetries}

We consider conditions for an elliptic curve embedding in 
$\PP^r$ to admit many projective linear transformations, 
or symmetries.  In what follows, we recall standard 
definitions and conclusions drawn from Kohel~\cite{Kohel}
(reformulated here without the language of invertible 
sheaves).  The examples of Hessian curves and Edwards 
curves\footnote{%
In particular my discussions of symmetries with Bernstein 
and Lange motivated a study of symmetries in the unpublished 
work~\cite{BernsteinKohelLange} (see the EFD~\cite{EFD}) 
on twisted Hessian curves, picked up by Joye and Rezaeian 
Farashahi~\cite{JoyeReza} after posting to the EFD).
This further led the author to develop a general framework 
for symmetries and to classify the linear action of torsion
in~\cite{Kohel}.}
play a pivotal role in motivating~\cite{Kohel} and 
further examples 
(see Bernstein and Lange~\cite{BernsteinLange-Completed},
Joye and Rezaeian Farashahi~\cite{JoyeReza}, 
Kohel~\cite[Section 8]{Kohel})
suggest that such symmetries go hand-in-hand 
with efficient forms for their arithmetic.\!\footnote{%
By efficient forms, we mean sparse polynomials 
expressions with small coefficients.  These may or may 
not yield the most efficient {\it algorithms}, as seen 
in comparing the evaluation of similarly sparse addition 
laws for the Edwards and $\ZZ/4\ZZ$-normal forms.}

The automorphism group of an elliptic curve $E$ is a finite group, 
and if $j(E) \ne 0, 12^3$, this group is $\{\,[\pm1]\,\}$. 
Inspection of standard projective models for elliptic curves 
shows that the symmetry group can be much greater.
The disparity is explained by the existence of subgroups of 
rational torsion. The automorphism group of an elliptic 
curve is defined to be the automorphisms of the curve which 
fix the identity point, which does not include translations. 
For any rational torsion point $T$, the translation-by-$T$ 
map $\tau_T$ is an automorphism of the curve, which may 
give rise to the additional symmetries. 

We restrict to models of elliptic curves given by complete 
linear systems of a given degree~$d$.  Basically, such a 
curve is defined by $E \subset \PP^r$ such that $r = d-1$, 
$E$ is not contained in any hyperplane, and any hyperplane 
$H$ intersects $E$ in exactly $d$ points, counted with 
multiplicities.  For embedding degree~$3$, such a curve is 
given by a single homogeneous form $F(X,Y,Z)$ of degree~$3$, 
and for degree~$4$ we have an intersection of two quadrics 
in $\PP^3$.  Quartic plane models formally lie outside of 
this scope --- they are neither nonsingular nor given by a 
complete linear system --- but determine a unique degree~4 
elliptic curve in $\PP^3$ after completing the basis of 
functions. As in the case of the Edwards curve, we 
always pass to this model to apply the theory. 

\begin{definition}
\label{def:symmetric_embedding}
Let $E \subset \PP^r$ be an elliptic curve embedded with 
respect to a complete linear system.  We say that $E$ is 
a symmetric model if $[-1]$ is induced by a projective 
linear transformation of $\PP^r$. 
\end{definition}

We next recall a classification of symmetric embeddings 
of elliptic curves (cf.~Kohel~\cite[Lemma 2]{Kohel} for 
the statement in terms of invertible sheaves). 

\begin{lemma}
\label{lem:divisor_class}
Let $E \subset \PP^r$ be an elliptic curve over $k$ embedded with 
respect to a complete linear system.  There exists a point $S$ 
in $E(k)$ such that for any hyperplane $H$ in $\PP^r$ not 
containing $E$, the set of 
points in the intersection $E \cap H = \{ P_0,\dots,P_r \}$, 
in $E(\bar{k})$, counted with multiplicity, sum to $S$.  
The model is symmetric if and only if $S$ is in the subgroup 
$E[2]$ of $2$-torsion points. 
\end{lemma}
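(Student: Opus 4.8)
The plan is to reinterpret the hyperplane sections as the effective divisors of a fixed line bundle and to deduce both assertions from the group-law description of linear equivalence on $E$. Since $E \subset \PP^r$ is embedded by a complete linear system, I would fix the associated line bundle $\cL$ of degree $d = r+1$, for which $\PP^r$ is the projective space attached to $H^0(E,\cL)$. A hyperplane $H$ not containing $E$ then pulls back to a nonzero section of $\cL$ whose divisor of zeros is exactly the intersection cycle $E \cap H$; hence the sections $E \cap H$ are precisely the effective divisors of $|\cL|$, all lying in one divisor class $[D]$ of degree $d$.

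For the first assertion I would use the summation map $\Sigma\big(\sum_i n_i P_i\big) = \sum_i [n_i]P_i$, where the right-hand side is formed with the group law on $E$. The standard isomorphism $\mathrm{Pic}^0(E) \isom E$ says that two divisors of equal degree are linearly equivalent if and only if they have the same image under $\Sigma$. Consequently every divisor in $|\cL|$ has one and the same sum $S := \Sigma(D) \in E(\bar{k})$, which is the common value in the statement. To see $S \in E(k)$, note that $\cL$ is defined over $k$, so $[D]$ is $\mathrm{Gal}(\bar{k}/k)$-stable and $\Sigma$ is Galois-equivariant (the group law and $O$ being defined over $k$); thus $S^\sigma = S$ for every $\sigma$, so $S$ is rational.

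For the equivalence I would invoke the criterion that an automorphism $\phi$ of $E$ is induced by a projective linear transformation of $\PP^r$ compatible with the embedding if and only if $\phi^*\cL \isom \cL$: in that case $\phi$ acts on $H^0(E,\cL)$ and hence linearly on $\PP^r$, and conversely a linear transformation preserving $E$ carries hyperplane sections to hyperplane sections. Applying this to the involution $\phi = [-1]$, the pullback divisor $[-1]^*D = \sum_i n_i[-P_i]$ has the same degree $d$ and satisfies $\Sigma([-1]^*D) = -S$. By the criterion above, $[-1]^*\cL \isom \cL$ holds exactly when $-S = S$, i.e.\ when $2S = O$, i.e.\ when $S \in E[2]$. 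Since the model is symmetric precisely when $[-1]$ is projectively linear, this gives the stated equivalence.

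The step needing the most care is the extension criterion $\phi^*\cL \isom \cL \Leftrightarrow \phi$ extends linearly, and keeping the direction of pullback straight so that $[-1]$ produces $-S$ rather than $S$; here the fact that $[-1]$ is its own inverse removes any ambiguity in computing $[-1]^*D$. Everything else reduces to the group-law criterion for linear equivalence, so I anticipate no substantial computation.
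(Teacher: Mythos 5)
Your proof is correct, and it is essentially the paper's own approach: the paper gives no proof of this lemma, recalling it instead from Kohel~\cite[Lemma 2]{Kohel}, where it is stated and proved in the language of invertible sheaves, and your argument --- hyperplane sections as the divisors of $|\cL|$, Abel--Jacobi summation to produce the common sum $S$ with Galois equivariance for rationality, and the criterion that an automorphism $\phi$ is induced by a linear transformation if and only if $\phi^*\cL \isom \cL$, applied to $\phi = [-1]$ to get $2S = \oO$ --- is exactly that sheaf-theoretic argument. No gaps to report.
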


\begin{definition}
\label{def:embedding_divisor}
Let $E$ be a degree $d$ embedding in $\PP^r$ with respect to 
a complete linear system, and let $S$ be the point as in the 
previous lemma.  We define the {\it embedding divisor class} 
of $E$ to be $(d-1)(\oO) + (S)$. 
\end{definition}

\exclude{
\noindent{\bf Examples.}
The Weierstrass model $Y^2Z = X^3 + aXZ^2 + bZ^3$ with base 
point $\oO = (0:1:0)$ is symmetric with embedding divisor class 
$3(\oO)$.  The same is true for the Hessian model:
$$
X^3 + Y^3 + Z^3 = c X Y Z,
$$
with $\oO = (0:-1:1)$, which admits a large number of symmetries 
coming from the $3$-torsion subgroup. We note that the Legendre 
family:
$$
Y^2Z = X(X-Z)(X-\lambda Z),
$$
has full $2$-torsion subgroup, but the translation maps do not 
act linearly, as explained by Lemma~\ref{lem:translation_linear} 
below.

Among degree~$4$ models, the families with large symmetry groups 
are the Jacobi model:
$$
a X_0^2 + X_1^2 = X_2^2,\
b X_0^2 + X_2^2 = X_3^2,\
c X_0^2 + X_3^2 = X_1^2,
$$
where $a + b + c = 0$ and with $\oO = (0:1:1:1)$ (with full 
$2$-torsion subgroup), with embedding divisor $4(\oO)$. 
Similarly the split Edwards model (the original model of 
Edwards with canonical embedding in $\PP^3$), 
$$
X_0^2 - a^2 X_2^2 = X_1^2 - a^2 X_3^2,\ X_0 X_3 = X_1 X_2,
$$
with $\oO = (a:0:1:0)$, has a large subgroup of $4$-torsion 
points which act by linear translations.  The embedding divisor 
is $3(\oO) + (S)$, where $S$ is the $2$-torsion point $(a:0:-1:0)$. 
Neither of these models has good reduction to characteristic~2. 

Finally in this article, we are interested in the $\ZZ/4\ZZ$-normal 
form 
$$
X_0^2 - X_1^2 + X_2^2 - X_3^2 = e X_0 X_2 = e X_1 X_3,
$$
with $\oO = (1:0:0:1)$, and embedding divisor $3(\oO) + (S)$, 
where $S = (0:1:1:0)$, and the $\bbmu_4$-normal form
$$
X_0^2 - X_2^2 = c^2 X_1 X_3,\
X_1^2 - X_3^2 = c^2 X_0 X_2,
$$
with $\oO = (c:1:0:1)$ and embedding divisor $3(\oO)+(S)$ where 
$S = (c:-1:0:-1)$ ($= \oO$ in characteristic 2), which have large 
symmetry groups (coming from $4$-torsion) and good reduction at $2$. 
\vspace{1mm}
}

We describe here the classification of elliptic curves with 
projective embedding, up to {\it linear} isomorphism, rather 
than isomorphism.\!\footnote{%
In recent cryptographic literature, there has been a trend 
to refer to existence of a {\it birational equivalence}. 
In the context of elliptic curves, by definition nonsingular 
projective curves, this concept coincides with isomorphism, 
and we want to identity the subclass of isomorphisms which 
are linear with respect to the coordinate functions of the 
given embedding.}
The notion of isomorphisms given by linear transformations plays 
an important role in the addition laws, since such a change 
of variables gives an isomorphism between the respective spaces 
of addition laws of fixed bidegree $(m,n)$, as described in 
Kohel~\cite[Section 7]{Kohel}.
For a point $T$, we denote the translation-by-$T$ morphism 
by $\tau_T$, given by $\tau_T(P) = P + T$. We now recall 
the classification of symmetries which arise from the 
group law~\cite[Lemma 5]{Kohel}. 

\begin{lemma}
\label{lem:translation_linear}
Let $E \subset \PP^r$ be embedded with respect to the complete 
linear system of degree $d$ and let $T$ be in $E(\bar{k})$.
The translation-by-$T$ morphism is induced by a projective 
linear automorphism of $\PP^r$ if and only if $dT = \oO$.
\end{lemma}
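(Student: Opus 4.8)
The plan is to pass through the divisor-class description of the complete linear system and reduce the linearity of $\tau_T$ to a linear equivalence of divisors on $E$, which is then decided by a single group-law computation. First I would identify the coordinate functions concretely. Since $E \subset \PP^r$ is embedded by a complete linear system of degree $d = r+1$, a hyperplane section is an effective divisor $D$ of degree $d$ whose class is $(d-1)(\oO)+(S)$ by Lemma~\ref{lem:divisor_class}, and the restrictions $X_0,\dots,X_r$ of the coordinate functions span the Riemann--Roch space $L(D) = \{\,f : \div(f) + D \geq 0\,\}$. Riemann--Roch on the genus-$1$ curve $E$ gives $\dim L(D) = d = r+1$, so the system is exactly $L(D)$ and the $X_i$ are a basis; this identifies the ambient space with $\PP(L(D)^*)$, so that projective linear automorphisms correspond to invertible linear maps of $L(D)$.

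Next I would reformulate linearity as an equivalence of divisor classes. The translation $\tau_T$ is induced by a projective linear automorphism of $\PP^r$ precisely when the embedding by the $X_i$ and the composite embedding by the pulled-back functions $\tau_T^* X_i$ differ by a linear change of coordinates. Because the system is complete, $GL_{r+1}$ acts transitively on bases of $L(D)$, so this holds if and only if the functions $\tau_T^* X_i$, after clearing a common factor, again form a basis of $L(D)$ --- equivalently, if and only if $\tau_T^* L(D) = L(\tau_T^* D)$ coincides with $L(D)$ up to multiplication by a single rational function, i.e.\ $\tau_T^* D \sim D$. Concretely, if $\tau_T$ is linear the $\tau_T^* X_i$ are linear combinations of the $X_j$ and hence lie in $L(D)$; conversely, given $\tau_T^* D \sim D$ one chooses $g$ with $\div(g) = \tau_T^* D - D$ and checks that multiplication by $g$ carries $L(D)$ isomorphically onto $\tau_T^* L(D)$, producing the required linear map on coordinates (the $\tau_T^* X_i$ have no common zero, since $\tau_T$ is an isomorphism and the $X_i$ do not).

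Finally I would compute the class $\tau_T^* D$ by a direct group-law calculation. Writing $D = \sum n_i (Q_i)$, the fiber of $\tau_T$ over $Q_i$ is $Q_i - T$, so $\tau_T^* D = \sum n_i (Q_i - T)$, which has the same degree $d$ but group-law sum $S - dT$ in place of $S$. Since linear equivalence of divisors on an elliptic curve is governed exactly by degree together with the group-law sum, we obtain $\tau_T^* D \sim D$ if and only if $S - dT = S$, that is $dT = \oO$. Combining this with the previous step yields the claim.

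I expect the main obstacle to be the middle step: carefully justifying the equivalence between ``induced by a projective linear automorphism'' and the divisor relation $\tau_T^* D \sim D$. This requires using completeness of the linear system to pin the ambient space down as $\PP(L(D)^*)$, so that linear automorphisms really do correspond to automorphisms of $L(D)$, and then tracking the scalar function $g$ relating $L(\tau_T^* D)$ to $L(D)$ while verifying that the pulled-back sections still define a morphism --- base-point freeness being automatic here because $\tau_T$ is an isomorphism of the already-embedded curve.
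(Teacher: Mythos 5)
Your proof is correct, but note that the paper itself contains no proof of this lemma: it is recalled from Kohel~\cite[Lemma 5]{Kohel}, where it is established in the language of invertible sheaves --- translation by $T$ extends to a linear automorphism if and only if $\tau_T^*\mathcal{L} \isom \mathcal{L}$ for $\mathcal{L} = \mathcal{O}_E(1)$, and the kernel of the homomorphism $T \mapsto \tau_T^*\mathcal{L} \otimes \mathcal{L}^{-1}$ into $\mathrm{Pic}^0(E)$ is exactly $E[d]$ when $\deg\mathcal{L} = d$. Your argument is a faithful translation of that mechanism into divisor language: the reduction of linearity to $\tau_T^*D \sim D$ (using completeness to identify $\PP^r$ with $\PP(L(D)^*)$, and multiplication by $g$ with $\div(g) = \tau_T^*D - D$ to realize the linear change of coordinates) is the divisor-level counterpart of $\tau_T^*\mathcal{L} \isom \mathcal{L}$, while the computation $\mathrm{sum}(\tau_T^*D) = S - dT$ together with the degree-and-sum criterion for linear equivalence replaces the sheaf-theoretic kernel computation. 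What your route buys is self-containedness in exactly the paper's chosen idiom (the author explicitly reformulates \cite{Kohel} ``without the language of invertible sheaves,'' and you show the lemma needs only Riemann--Roch plus Abel--Jacobi on a genus-one curve); what the sheaf formulation buys is generality, since the same kernel computation is the theorem of the square on an arbitrary abelian variety, the setting of the structural results of Lange--Ruppert~\cite{LangeRuppert} and \cite{Kohel} invoked elsewhere. Two cosmetic points: strictly speaking the $X_i$ are sections rather than rational functions, so ``the $X_i$ span $L(D)$'' should read that the $X_i/X_0$ span $L(D)$ for $D = \div(X_0)$ (your later manipulation with $g$ is already consistent with this); and since $T$ is only assumed to lie in $E(\bar{k})$, the equivalence should be read over $\bar{k}$, which is harmless as the statement concerns a linear automorphism of $\PP^r$ over the algebraic closure.
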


Similarly, we recall the classification of projective linear 
isomorphisms between curves in $\PP^r$ 
(see Kohel~\cite[Lemma 3]{Kohel} for a slightly stronger formulation). 

\begin{lemma}
\label{lem:embedding_class}
Let $E_1$ and $E_2$ be isomorphic elliptic curves embedded 
in $\PP^r$ with respect to complete linear systems of the 
same degree $d$. An isomorphism $\varphi: E_1 \rightarrow E_2$ 
is induced by a projective linear transformation if and 
only if $\varphi(S_1) = S_2$, where $S_i \in E_i(k)$ 
determine the embedding divisor classes $(d-1)(\oO) + (S_i)$ 
of the embeddings.
\end{lemma}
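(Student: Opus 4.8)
The plan is to reduce the geometric condition --- existence of a linear transformation of $\PP^r$ restricting to $\varphi$ --- to a condition on line bundles, and then to convert that into the stated condition on points by means of the group law (Abel's theorem). Write $L_i = \mathcal{O}_{E_i}(1)$ for the very ample line bundle cutting out the hyperplane sections of $E_i$; by hypothesis each $E_i$ is embedded by the complete linear system $|L_i|$, so the homogeneous coordinate functions $X_0,\dots,X_r$ restrict to a basis of $H^0(E_i, L_i)$. By Lemma~\ref{lem:divisor_class} together with Definition~\ref{def:embedding_divisor}, the divisor class of $L_i$ is $(d-1)(\oO) + (S_i)$.

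First I would establish the equivalence
$$
\varphi \text{ is induced by a projective linear transformation} \iff \varphi^* L_2 \cong L_1.
$$
The forward direction is immediate: a linear transformation $A$ of $\PP^r$ carrying $E_1$ onto $E_2$ and restricting to $\varphi$ sends hyperplanes to hyperplanes, hence identifies hyperplane sections of $E_2$ with those of $E_1$, giving $\varphi^* L_2 \cong L_1$. For the converse, a chosen isomorphism $\varphi^* L_2 \cong L_1$ induces a linear isomorphism $H^0(E_2, L_2) \xrightarrow{\sim} H^0(E_1, \varphi^* L_2) \cong H^0(E_1, L_1)$ on global sections. Because both embeddings are complete, these section spaces are exactly the spaces of ambient linear forms, so this linear isomorphism projectivizes to a transformation $A$ of $\PP^r$, and naturality of the morphism attached to a complete linear system shows that $A$ restricts to $\varphi$ on $E_1 \to E_2$.

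I expect the converse in this step to be the main obstacle: one must verify that the abstract isomorphism of section spaces is genuinely realized by a linear map of $\PP^r$ recovering $\varphi$ on points, and not merely by some unrelated linear map. This is precisely where completeness of the linear system is essential --- it guarantees that $X_0,\dots,X_r$ span all of $H^0(E_i,L_i)$, so the projective embedding is functorially determined by the pair $(E_i, L_i)$ and the induced $A$ agrees with $\varphi$.

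Finally I would translate $\varphi^* L_2 \cong L_1$ into the point condition. Since $\varphi$ is an isomorphism of elliptic curves it preserves $\oO$ and the group law, and $\varphi^*$ acts on divisors by $(Q) \mapsto (\varphi^{-1}(Q))$; hence $\varphi^* L_2$ has divisor class $(d-1)(\oO) + (\varphi^{-1}(S_2))$. By Abel's theorem, two divisors of degree $d$ on $E_1$ are linearly equivalent precisely when they have the same image under the group summation map, and the classes of $L_1$ and of $\varphi^* L_2$ sum to $S_1$ and $\varphi^{-1}(S_2)$ respectively. Thus $\varphi^* L_2 \cong L_1$ if and only if $S_1 = \varphi^{-1}(S_2)$, that is, $\varphi(S_1) = S_2$, which together with the equivalence above proves the lemma.
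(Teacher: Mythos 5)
Your proof is correct, and it takes essentially the approach the paper intends: the paper does not prove this lemma itself but recalls it from Kohel~\cite[Lemma 3]{Kohel}, where the statement is exactly the sheaf-theoretic equivalence you establish first (namely that $\varphi$ is linear if and only if $\varphi^* L_2 \cong L_1$, using completeness of the linear systems to realize the isomorphism of section spaces as an ambient linear map), the paper's version being its announced ``reformulation without the language of invertible sheaves.'' Your second step --- translating $\varphi^* L_2 \cong L_1$ into $\varphi(S_1) = S_2$ via Abel's theorem and the fact that $\varphi(\oO) = \oO$ --- is precisely the bridge between Kohel's formulation and the statement as given here, so nothing is missing.
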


\noindent{\bf Remark.}
By definition, an isomorphism $\varphi: E_1 \rightarrow E_2$ 
of elliptic curves takes the identity of $E_1$ to the 
identity of $E_2$.  It may be possible to define a projective 
linear transformation from $E_1$ to $E_2$ which does not 
respect the group identities (hence is not a group isomorphism). 

\section{Properties of the Edwards normal form}
\label{sec:EdwardsModel}

In this section we suppose that $k$ is a field of characteristic 
different from~2. 
To illustrate the symmetry properties of the previous section 
and motivate the analogous construction in characteristic~2, 
we recall the principal properties of the Edwards normal form, 
summarizing work of Edwards~\cite{Edwards}, 
Hisil et al.~\cite{Hisil-EdwardsRevisited}, and 
Bernstein and Lange~\cite{BernsteinLange-Completed}.
We follow the definitions and notation of Kohel~\cite{Kohel},
defining the twisted Edwards normal form $E/k$ in $\PP^3$: 
\begin{equation*}
\label{eqn:EdwardsModel}
c X_1^2 + X_2^2 = X_0^2 + d X_3^2,\ X_0 X_3 = X_1 X_2,\ \oO = (1:0:1:0). 
\end{equation*}

\subsection*{Edwards model for elliptic curves}

In 2007, Edwards introduced a new model for elliptic curves~\cite{Edwards}, 
defined by the affine model 
$$
x^2 + y^2 = a^2(1 + z^2),\ z = xy,
$$ 
over any field $k$ of characteristic different from $2$.  
The complete linear system associated to this degree~$4$ model has basis 
$\{1,x,y,z\}$ such that the image $(1:x:y:z)$ is a nonsingular projective 
model in $\PP^3$:
$$
X_1^2 + X_2^2 = a^2 (X_0^2 + X_3^2),\ X_0 X_3 = X_1 X_2,
$$
with identity $\oO = (a:0:1:0)$, as a family of curves over $k(a)$. 
We hereafter refer to this model as the split Edwards model. 
Bernstein and Lange~\cite{BernsteinLange-Edwards} introduced a rescaling 
to descend to $k(d) = k(a^4)$, and subsequently (with Joye, 
Birkner, and Peters~\cite{BernsteinEtAl-TwistedEdwards}) a quadratic twist 
by $c$, to define the twisted Edwards model with $\oO = (1:0:1:0)$:
$$
c X_1^2 + X_2^2 = X_0^2 + d X_3^2,\ X_0 X_3 = X_1 X_2.
$$
The twisted Edwards model in this form appears in Hisil et 
al.~\cite{Hisil-EdwardsRevisited} (as extended Edwards coordinates), 
which provides the most efficient arithmetic. We next recall the 
principal properties of the Edwards normal form (with $c = 1$). 
\vspace{2mm}

\subsection*{Symmetry properties}

\begin{enumerate}
\item
The embedding divisor class is $3(\oO) + (S)$ where $S = 2T$. 
\item 
The point $T = (1:-1:0:0)$ is a rational $4$-torsion point. 
\item
The translation--by--$T$ and inverse morphisms are given by: 
$$
\begin{array}{r}
\tau_T(X_0:X_1:X_2:X_3) = (X_0:-X_2:X_1:-X_3),\\
{[-1]}(X_0:X_1:X_2:X_3) = (X_0:-X_1:X_2:-X_3). 
\end{array}
$$
\item
The model admits a factorization $s \circ (\pi_1 \times \pi_2)$ 
through $\PP^1 \times \PP^1$, where 
$$
\hspace{-4mm}
\pi_1(X_0:X_1:X_2:X_3) = 
\left\{
\begin{array}{l}
(X_0:X_1), \\
(X_2:X_3)
\end{array}
\right.
\ccomma\ 
\pi_2(X_0:X_1:X_2:X_3) = 
\left\{
\begin{array}{l}
(X_0:X_2), \\
(X_1:X_3).
\end{array}
\right.
$$
and $s$ is the Segre embedding
$$
s((U_0:U_1),(V_0:V_1)) = (U_0 V_0 : U_1 V_0 : U_0 V_1 : U_1 V_1).
$$
\end{enumerate}
{\bf Remark.} The linear expression for $[-1]$ implies that 
the embedding is symmetric.
This linearity is a consequence of the form of the embedding divisor 
$3(\oO)+(S)$, in view of Lemma~\ref{lem:divisor_class}. 
In addition the two projections are symmetric, in the sense that they are 
stable under $[-1]$.  This is due to the fact that the divisors $2(O) = 
\pi_1^*(\infty)$ and $(O)+(T) = \pi_2^*(\infty)$ are symmetric.

\subsection*{A remarkable factorization}

Hisil et al.~\cite{Hisil-EdwardsRevisited} discovered amazingly 
simple bilinear rational expressions for the affine addition laws, 
which can be described as a factorization of the addition laws 
through the isomorphic curve in $\PP^1 \times \PP^1$ (see 
Bernstein and Lange~\cite{BernsteinLange-Completed} for further 
properties). As a consequence of the symmetry of the embedding 
and its projections, the composition of the addition morphism
$
\mu : E \times E \longrightarrow E
$
with each of the projections $\pi_i: E \rightarrow \PP^1$ admits a basis 
of {\it bilinear} defining polynomials. For $\pi_1 \circ \mu$ 
$\pi_1 \circ \mu$, respectively, we have 
\begin{center}
\scalebox{0.96}{
$
\left\{ 
\begin{array}{l}
  ( X_0 Y_0 + d X_3 Y_3,\; X_1 Y_2 + X_2 Y_1 ), \\
  ( c X_1 Y_1 + X_2 Y_2,\; X_0 Y_3 + X_3 Y_0 )
\end{array} 
\right\}
\mbox{ and }
\left\{ 
\begin{array}{l}
( X_1 Y_2 - X_2 Y_1,\; -X_0 Y_3 + X_3 Y_0 ), \\
( X_0 Y_0 - d X_3 Y_3,\; -c X_1 Y_1 + X_2 Y_2 )
\end{array}
\right\}\cdot
$
}
\end{center}
Addition laws given by polynomial maps of bidegree $(2,2)$ are 
recovered by composing with the Segre embedding.  
This factorization led the author to prove dimension formulas 
for these addition law projections and classify the exceptional 
divisors~\cite{Kohel}. In particular, this permits 
one to prove {\it a priori} the form of the exceptional divisors 
described in Bernstein and Lange~\cite[Section 8]{BernsteinLange-Completed},  
show that these addition laws span all possible addition laws 
of the given bidegree, and conclude their completeness.  

\section{Axioms for a $D_4$-linear model}
\label{sec:AxiomsD4}

The previous sections motivate the study of symmetric quartic models 
of elliptic curves with a rational $4$-torsion point $T$.  
For such a model, we obtain a $4$-dimensional linear representation of 
$
D_4 \isom \langle [-1] \rangle \ltimes \langle \tau_T \rangle,
$
induced by the action on the linear automorphisms of $\PP^3$.
Here we give characterizations of elliptic curve models for which 
this representation is given by coordinate permutation. 

Suppose that $E/k$ is an elliptic curve with $\mathrm{char}(k) = 2$ and $T$ a 
rational $4$-torsion point.  In view of the previous lemmas and the properties 
of Edwards' normal form, we consider reasonable hypotheses for a characteristic~$2$ 
analog.  We note that in the Edwards model, $\tau_T$ acts by signed coordinate 
permutation, which we replace with a permutation action in characteristic $2$.  
\begin{enumerate}
\item
\label{cond1}
The embedding of $E \rightarrow \PP^3$ is a quadratic intersection.
\item
\label{cond2}
$E$ has a rational $4$-torsion point $T$.
\item
\label{cond3}
The group $\langle [-1]\rangle \ltimes \langle \tau_T \rangle \isom D_4$ 
acts by coordinate permutation, and in particular 
$
\tau_T(X_0:X_1:X_2:X_3) = (X_3:X_0:X_1:X_2).
$
\item
\label{cond4}
There exists a symmetric factorization of $E$ through $\PP^1 \times \PP^1$.
\end{enumerate}
Combining conditions~\ref{cond3} and \ref{cond4}, we assume that $E$ lies 
in the skew-Segre image $X_0 X_2 = X_1 X_3$ of $\PP^1 \times \PP^1$.
In order for the representation of $\tau_T$ to stabilize the image of 
$\PP^1 \times \PP^1$, we have
$$
\PP^1 \times \PP^1 \longrightarrow \PP^3,
$$
whose image is $X_0 X_2 = X_1 X_3$, in isomorphism with $\PP^1 \times \PP^1$
by the projections
$$
\pi_1(X_0:X_1:X_2:X_3) = 
\left\{
\begin{array}{l}
(X_0:X_1), \\
(X_3:X_2),
\end{array}
\right.
\ccomma\ 
\pi_2(X_0:X_1:X_2:X_3) = 
\left\{
\begin{array}{l}
(X_0:X_3), \\
(X_1:X_2).
\end{array}
\right.
$$
Secondly, up to isomorphism, there are {\it two} permutation representations 
of $D_4$, both having the same image.  The two representations are distinguished 
by the image of $[-1]$, up to coordinate permutation, being one of the two
$$
\begin{array}{l}
{[-1]}(X_0:X_1:X_2:X_3) = (X_3:X_2:X_1:X_0) \mbox{ or } (X_0:X_3:X_2:X_1).
\end{array}
$$
Considering the form of the projection morphisms $\pi_1$ and $\pi_2$, we see 
that only the first of the possible actions of $[-1]$ stabilizes $\pi_1$ and 
$\pi_2$, while the second exchanges them.  In the next section we are able 
to write down a normal form with $D_4$-permutation action associated to each 
of the possible actions of $[-1]$. 

\section{Normal forms}
\label{sec:NormalForms}

The objective of this section is to introduce elliptic curve models 
which satisfy the desired axioms of the previous section.  
After their definition we list their main properties, whose proof 
is essentially immediate from the symmetry properties of the model. 
We first present the objects of study over a general field $k$ 
before passing to $k$ of characteristic~$2$.
Additional details of their construction can be found in the talk 
notes~\cite{AGCT2011} where they were first introduced.

\begin{definition}
\label{def:Z4-NormalForm}
An elliptic curve $E/k$ in $\PP^3$ is said to be in 
{\it $\ZZ/4\ZZ$-normal form} if it is given by the equations 
$$
X_0^2 - X_1^2 + X_2^2 - X_3^2 = e X_0 X_2 = e X_1 X_3,
$$
with identity $\oO = (1:0:0:1)$.
\end{definition}

The $\ZZ/4\ZZ$-normal form is the unique model, up to linear 
isomorphism (see Theorem~\ref{thm:normal_form_isomorphism}), 
satisfying the complete set of axioms of the previous section.
If we drop the condition for the factorization through 
$\PP^1 \times \PP^1$ (condition~\ref{cond4}), we obtain 
the following normal form, which admits the alternative 
action of $[-1]$. 

\begin{definition}
\label{def:Mu4-NormalForm}
An elliptic curve $C/k$ in $\PP^3$ is said to be in 
{\it split $\bbmu_4$-normal form} if it is given by the equations 
$$
X_0^2-X_2^2 = c^2\,X_1 X_3,\; 
X_1^2-X_3^2 = c^2\,X_0 X_2,
$$
with identity $\oO = (c:1:0:1)$.  
\end{definition}

These normal forms both have good reduction in characteristic~$2$.
The $\ZZ/4\ZZ$-normal form admits a rational $4$-torsion point 
$T = (1:1:0:0)$, and the isomorphism 
$$
\langle T \rangle
  = \{ (1:0:0:1), (1:1:0:0), (0:1:1:0), (0:0:1:1) \}
\isom \ZZ/4\ZZ 
$$
gives the name to curves in this form. 

On the split $\bbmu_4$-normal form, the point $T = (1:c:1:0)$ is 
a rational $4$-torsion point, and if $\mathrm{char}(k) \ne 2$ and 
there exists a primitive $4$-th root of unity $i$ in $k$, then 
$R = (c:i:0,-i)$ is a rational $4$-torsion point (dual to $T$ 
under the Weil pairing) such that $\langle T, R \rangle = C[4]$. 
The subgroup 
$$
\langle R \rangle = 
  \{ (c:1:0:1), (c:i:0:-i), (c:-1:0:-1), (c:-i:0:i) \}
\isom \bbmu_4 
$$
is a group (scheme) isomorphic to the group (scheme) $\bbmu_4$ of 
$4$-th roots of unity, which gives the name to this normal form.  
The nonsplit variant (see Remark following Corollary~\ref{cor:mu4-duplication-complexity})
descends to any subfield containing $c^4$, does not necessarily 
have a rational $4$-torsion point, but in the application to 
elliptic curves over finite fields of characteristic~2, every 
such model can be put in the split form. 
The action of the respective points $T$ by translation gives 
the coordinate permutation action which we desire, the dual 
subgroup $\langle{R}\rangle$ degenerates in characteristic~$2$ 
to the identity group $\{\oO\} = \{(c:1:0:1)\}$, and the 
embedding divisor $3(\oO)+(S)$, where $S = 2R$, degenerates to $4(\oO)$. 
Hereafter we consider these models only over a field of characteristic~2.

We now formally state and prove the main symmetry properties of 
the new models over a field of characteristic~$2$ with analogy 
to the Edwards model.

\begin{theorem}
\label{thm:Z4Symmetries}
Let $E/k$ be a curve in $\ZZ/4\ZZ$-normal form over a field of characteristic~$2$.
\begin{enumerate}
\item
The embedding divisor class is $3(\oO) + (S)$ where $S = (0:1:1:0) = 2T$.
\item
The point $T = (1:1:0:0)$ is a rational $4$-torsion point.
\item
The translation--by--$T$ and inverse morphisms are given by:
$$
\begin{array}{r}
\tau_T(X_0:X_1:X_2:X_3) = (X_3:X_0:X_1:X_2),\\
{[-1]}(X_0:X_1:X_2:X_3) = (X_3:X_2:X_1:X_0).
\end{array}
$$
\item
$E$ admits a factorization through $\PP^1 \times \PP^1$, where
$$
\hspace{-4mm}
\pi_1(X_0:X_1:X_2:X_3) = 
\left\{
\begin{array}{l}
(X_0:X_1),\\
(X_3:X_2),
\end{array}
\right.
\ccomma\ 
\pi_2(X_0:X_1:X_2:X_3) = 
\left\{
\begin{array}{l}
(X_0:X_3),\\
(X_1:X_2).
\end{array}
\right.
$$
\end{enumerate}
\noindent
More precisely, if $(U_0,U_1)$ and $(V_0,V_1)$ are the coordinate functions 
on $\PP^1 \times \PP^1$, the product morphism $\pi_1 \times \pi_2$ determines 
an isomorphism $E \rightarrow E_1$, where $E_1$ is the curve
$
(U_0 + U_1)^2(V_0 + V_1)^2 = c\,U_0 U_1 V_0 V_1,
$ 
whose inverse is the 
restriction of the skew-Segre embedding
$
((U_0:U_1),(V_0:V_1)) \longrightarrow (U_0 V_0 : U_1 V_0 : U_1 V_1: U_0 V_1).
$
\end{theorem}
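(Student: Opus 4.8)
The plan is to establish parts~(2) and~(3) first, since the coordinate-permutation descriptions of $\tau_T$ and $[-1]$ drive everything else, and then to deduce~(1) and~(4). I would begin by recording that in characteristic~$2$ the defining equations read $X_0^2+X_1^2+X_2^2+X_3^2 = eX_0X_2 = eX_1X_3$, and check by direct substitution that the cyclic shift $\rho\colon(X_0:X_1:X_2:X_3)\mapsto(X_3:X_0:X_1:X_2)$ and the reversal $\sigma\colon(X_0:X_1:X_2:X_3)\mapsto(X_3:X_2:X_1:X_0)$ each preserve $E$: the symmetric quartic is fixed, and the two bilinear forms $eX_0X_2$ and $eX_1X_3$ are interchanged, hence equal on $E$. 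Thus $\rho,\sigma$ restrict to automorphisms of the curve $E$, with $\rho(\oO)=T=(1:1:0:0)$ and $\sigma(\oO)=\oO$.

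The crux is to identify $\rho$ with $\tau_T$ and $\sigma$ with $[-1]$, and this is the step I expect to be the main obstacle, since the naive identification could be off by a sign. For $\rho$ I would use the criterion that a nontrivial automorphism of a genus-$1$ curve is a nonzero translation exactly when it is fixed-point free: writing any curve automorphism as $\tau_Q\circ\beta$ with $\beta\in\mathrm{Aut}(E,\oO)$, the map $P\mapsto\beta(P)-P$ is a nonzero homomorphism when $\beta\neq\mathrm{id}$, hence surjective, producing a fixed point, so only nonzero translations are fixed-point free. A fixed point of $\rho$ in $\PP^3$ forces the scaling $\lambda$ to satisfy $\lambda^4=1$, and in characteristic~$2$ this gives $\lambda=1$ and the single fixed point $(1:1:1:1)$, which lies on $E$ only if $e=0$. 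Hence $\rho$ is fixed-point free on $E$ and equals $\tau_T$. Then $\rho^4=\mathrm{id}$ yields $4T=\oO$, while $\rho^2$, the nontrivial permutation $(X_0:X_1:X_2:X_3)\mapsto(X_2:X_3:X_0:X_1)$, cannot act trivially on the nondegenerate curve $E$, so $2T\neq\oO$; this proves~(2) and shows $2T=\rho^2(\oO)=(0:1:1:0)$ is a nonzero $2$-torsion point. In particular $E$ is ordinary, so $j(E)\neq0$ and $\mathrm{Aut}(E,\oO)=\{[\pm1]\}$; since $\sigma$ fixes $\oO$ and acts nontrivially on $E$, it must be $[-1]$, completing~(3).

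For~(1), the linearity of $[-1]=\sigma$ makes the model symmetric, so Lemma~\ref{lem:divisor_class} places the embedding point $S$ in $E[2]=\{\oO,2T\}$; to decide between these I would compute one hyperplane section, most cheaply $E\cap\{X_0=0\}$. Using $e\neq0$ this degenerates to the two points $2T=(0:1:1:0)$ and $3T=(0:0:1:1)$, each with multiplicity two, whose sum in the group law is $2\cdot(2T)+2\cdot(3T)=10T=2T$. Hence $S=2T$, and by Definition~\ref{def:embedding_divisor} the embedding divisor class is $3(\oO)+(S)$ with $S=(0:1:1:0)$ as claimed.

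Finally, for~(4) and the explicit factorization, I would first verify that $\pi_1$ and $\pi_2$ are defined on all of $E$: the two coordinate representations of each agree precisely because $X_0X_2=X_1X_3$ holds on $E$, and they have no common base point. The skew-Segre map $s$ sends $((U_0:U_1),(V_0:V_1))$ to $(U_0V_0:U_1V_0:U_1V_1:U_0V_1)$; substituting these into the curve equations makes $X_0X_2=X_1X_3$ an identity and turns the symmetric quartic into $(U_0+U_1)^2(V_0+V_1)^2=e\,U_0U_1V_0V_1$, which is exactly $E_1$ (with $c=e$). A short check that $(\pi_1\times\pi_2)\circ s$ and $s\circ(\pi_1\times\pi_2)$ are the respective identities, again invoking $X_0X_2=X_1X_3$, shows these maps are mutually inverse and gives the asserted isomorphism $E\cong E_1$. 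Once the identification of $\rho$ and $\sigma$ in~(3) is in hand, the remaining parts reduce to these short intersection-theoretic and polynomial computations.
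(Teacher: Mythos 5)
Your proposal is correct and follows essentially the same route as the paper's proof: identify the coordinate permutations with $\tau_T$ and $[-1]$ by noting that the former is fixed-point free with $\tau_T(\oO)=T$ while the latter fixes $\oO$, deduce the $4$-torsion from $\tau_T^4=\mathrm{id}$, determine the embedding divisor class by summing a single hyperplane section, and verify the factorization through $\PP^1\times\PP^1$ by direct substitution into the skew-Segre map. The only differences are cosmetic: you fill in details the paper leaves implicit (the fixed-point-free criterion for translations, the unique projective fixed point $(1:1:1:1)$ of the cyclic shift in characteristic $2$, ordinarity forcing $\mathrm{Aut}(E,\oO)=\{[\pm1]\}$), and you use the section $X_0=0$, giving $2(2T)+2(3T)$ with sum $10T=2T$, where the paper uses $X_0+X_1+X_2+X_3=0$, which cuts out $\langle T\rangle$ with multiplicity one at each point and sum $6T=2T$.
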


\begin{proof}
The correctness of the forms for $[-1]$ and $\tau_T$ follow 
from the fact that they are automorphisms, that the asserted 
map for $[-1]$ fixes $\oO$ and that for $\tau_T$ has no fixed 
point, and that $\tau_T(\oO) = T$. 
Since $\tau_T^4 = 1$, it follows that $T$ is $4$-torsion. 
The hypersurface $X_0+X_1+X_2+X_3=0$ cuts out the subgroup 
$\langle{T}\rangle \isom \ZZ/4\ZZ$, which determines the 
embedding divisor class as $3(\oO) + (S)$ where 
$S = \oO + T + 2T + 3T = 2T \in E[2]$.  The factorization is 
determined by the automorphism group, and the image curve can 
be verified by elementary substitution.
\qed
\end{proof}

\begin{lemma}
\label{lem:Z4Weierstrass}
The $\ZZ/4\ZZ$-normal form is isomorphic to a curve in 
Weierstrass form $Y(Y + X)Z = X(X + c^{-1}Z)^2$.
The linear map $(X:Y:Z) = (X_1+X_2:X_2:c(X_0 + X_3))$ 
defines the isomorphism except at $\oO$. 
\end{lemma}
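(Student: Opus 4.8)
The plan is to verify that the stated linear map carries the defining relations of the $\ZZ/4\ZZ$-normal form onto the Weierstrass relation, and then to promote this birational correspondence to an isomorphism using smoothness of both curves. First I would rewrite the curve equation in characteristic~$2$: since squaring is additive, $X_0^2 - X_1^2 + X_2^2 - X_3^2 = (X_0+X_1+X_2+X_3)^2$, so the model becomes
$$
(X_0+X_1+X_2+X_3)^2 = c\,X_0 X_2 = c\,X_1 X_3,
$$
writing $c$ for the curve parameter as in Theorem~\ref{thm:Z4Symmetries}; in particular $X_0 X_2 = X_1 X_3$.

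Next I would record the pullbacks of the target coordinates. Setting $X = X_1 + X_2$, $Y = X_2$, and $Z = c(X_0 + X_3)$, one finds in characteristic~$2$ the clean identities $Y + X = X_1$ and $X + c^{-1}Z = X_0 + X_1 + X_2 + X_3$. Substituting into the two sides of the Weierstrass relation gives $Y(Y+X)Z = c\,X_1 X_2 (X_0 + X_3)$ and $X(X + c^{-1}Z)^2 = (X_1 + X_2)(X_0 + X_1 + X_2 + X_3)^2$. The key computation is to expand the latter using the curve equation in its two guises, applying $(X_0+\cdots+X_3)^2 = c\,X_0 X_2$ to the $X_1$-term and $(X_0+\cdots+X_3)^2 = c\,X_1 X_3$ to the $X_2$-term, which yields $c\,X_0 X_1 X_2 + c\,X_1 X_2 X_3 = c\,X_1 X_2(X_0 + X_3)$, matching the first side exactly. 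This shows the image of $E$ lies on the Weierstrass curve $W: Y(Y+X)Z = X(X+c^{-1}Z)^2$, which expands (using $(X+c^{-1}Z)^2 = X^2 + c^{-2}Z^2$) to the ordinary elliptic curve $y^2 + xy = x^3 + c^{-2}x$ in affine form.

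It remains to see that the map is an isomorphism, not merely a morphism onto $W$. The three linear forms $X_1+X_2$, $X_2$, $X_0 + X_3$ have common zero locus on $E$ equal to the single point $(1:0:0:1) = \oO$ (forcing $X_1 = X_2 = 0$ and $X_0 = X_3$), which is why the given formula degenerates precisely at $\oO$; since $E$ is a smooth projective curve the rational map extends to a morphism $\varphi: E \to W$. To conclude $\deg\varphi = 1$ I would exhibit the rational inverse directly from the relations above: $X_2 = Y$, $X_1 = X + Y$, $X_0 + X_3 = c^{-1}Z$, and then from $X_0 X_2 = X_1 X_3$ solve $X_0 = c^{-1}(X+Y)Z/X$, recovering all four coordinates. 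This displays $\varphi$ as birational, and a birational map between smooth projective curves is an isomorphism.

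The main obstacle is not the algebra of the substitution step, which is short and essentially forced, but the bookkeeping needed to rule out a nontrivial degree: one must confirm that $\oO$ is a base point of multiplicity one, so that the moving part of this degree-$4$ linear subsystem has degree $3$, matching a plane cubic and forcing degree one. Exhibiting the explicit rational inverse is the most economical way to settle this, since it simultaneously proves birationality and pins down the inverse correspondence; checking finally that $\varphi(\oO) = (0:1:0)$ is the identity of $W$ confirms that $\varphi$ is a group isomorphism and not merely a linear isomorphism of the underlying curves.
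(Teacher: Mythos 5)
Your main line of argument is correct and, apart from one false side claim addressed below, it proves the lemma; it is also more self-contained than the paper's own proof, which invokes Kohel~\cite[Lemma~3]{Kohel} for the existence of a linear map, dismisses the verification of its exact form as routine, and identifies the exceptional locus by the single observation that $X_1 = X_2 = 0$ meets $E$ only at $\oO$. Your substitution computation (splitting $(X_1+X_2)(X_0+X_1+X_2+X_3)^2$ by applying $c\,X_0X_2$ to the $X_1$-term and $c\,X_1X_3$ to the $X_2$-term) and your explicit rational inverse $X_2 = Y$, $X_1 = X+Y$, $X_0 = c^{-1}(X+Y)Z/X$, $X_3 = c^{-1}YZ/X$ are both correct, and exhibiting the inverse is a legitimate --- arguably preferable --- way to settle degree one, since the paper records no inverse at all. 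The only ingredient you use silently is nonsingularity of the target cubic, needed before invoking ``birational between smooth projective curves is an isomorphism''; this is immediate here, as the affine model $y^2 + xy = x^3 + c^{-2}x$ has discriminant $c^{-4} \ne 0$.

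The genuine error is your closing sentence: the extended morphism does \emph{not} satisfy $\varphi(\oO) = (0:1:0)$, and the ``final check'' you propose would fail if carried out. Substituting $2T = (0:1:1:0)$ into the linear map gives $(X_1+X_2 : X_2 : c(X_0+X_3)) = (0:1:0)$; since $\varphi$ is an isomorphism, hence injective, $\varphi(\oO)$ cannot also equal $(0:1:0)$. In fact $\varphi(\oO) = (0:0:1)$, the unique rational $2$-torsion point $(x,y)=(0,0)$ of $W$: taking the uniformizer $u = (X_0+X_3)/X_0$ at $\oO$, the curve equation gives $X_1/X_0 = c^{-1}u^2 + O(u^3)$, so the image point is $\big(c^{-1}u^3 : c^{-1}u^2 : cu\big)$ up to units, which tends to $(0:0:1)$ as $u \to 0$. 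Equivalently, $\div(X_0+X_3) = 3(2T) + (\oO)$ on $E$, so the flex line $Z = 0$ of $W$ (meeting $W$ only at $(0:1:0)$, with multiplicity $3$) pulls back to $2T$, not to $\oO$. Thus $\varphi$ is an isomorphism of curves but not of pointed curves, which is all the lemma claims --- compare the paper's remark after Lemma~\ref{lem:Mu4Weierstrass} that these maps extend to isomorphisms with no base-point-free linear representative; a group isomorphism is obtained as $\varphi \circ \tau_{2T}$, equivalently by composing $\varphi$ with translation by $(0:0:1)$ on $W$. Delete or correct that final sentence and your proof is complete.
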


\begin{proof}
The existence of a linear map is implied by Kohel~\cite[Lemma~3]{Kohel}, 
and the exact form of this map can be easily verified.  
The exceptional divisor of the given rational map follows 
since $X_1 = X_2 = 0$ only meets the curve at $\oO$. 
\qed
\end{proof}

\begin{theorem}
\label{thm:Mu4Symmetries}
Let $C/k$ be a curve in $\bbmu_4$-normal form over a field of characteristic~$2$.
\vspace{-2mm}
\begin{enumerate}
\item
The embedding divisor class of $C$ is $4(\oO)$.
\item
The point $T = (1:c:1:0)$ is a rational $4$-torsion point.
\item
The translation--by--$T$ and inverse morphisms are given by:
$$
\begin{array}{r}
\tau_T(X_0:X_1:X_2:X_3) = (X_3:X_0:X_1:X_2),\\ 
{[-1]}(X_0:X_1:X_2:X_3) = (X_0:X_3:X_2:X_1).
\end{array}
$$
\end{enumerate}
\end{theorem}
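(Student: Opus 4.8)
The plan is to mirror the proof of Theorem~\ref{thm:Z4Symmetries}: first check that the two asserted coordinate maps are linear automorphisms of $C$, then identify them group-theoretically, and only afterwards read off the embedding divisor class from the symmetry that results.

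First I would verify by direct substitution that the cyclic shift $\sigma(X_0:X_1:X_2:X_3) = (X_3:X_0:X_1:X_2)$ and the transposition $\rho(X_0:X_1:X_2:X_3) = (X_0:X_3:X_2:X_1)$ preserve the defining ideal. In characteristic~$2$ the two quadrics read $(X_0+X_2)^2 = c^2 X_1 X_3$ and $(X_1+X_3)^2 = c^2 X_0 X_2$; the shift $\sigma$ interchanges them, while $\rho$ fixes the first and fixes the second (the passage from $X_1^2 - X_3^2$ to $X_3^2 - X_1^2$ being immaterial in characteristic~$2$). Hence both are linear automorphisms of $C$. A short computation gives $\sigma(\oO) = T$ and shows $\sigma$ has no fixed point: a fixed point would force all coordinates equal, and $(1:1:1:1)$ fails the first quadric since $c \ne 0$.

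Next I would identify the maps. Any automorphism of $C$ as a curve has the form $\sigma = \tau_Q \circ \alpha$ with $Q = \sigma(\oO)$ and $\alpha$ fixing $\oO$; a point $P$ is fixed by $\sigma$ precisely when $(\alpha - 1)(P) = -Q$. If $\alpha \ne 1$ then $\alpha - 1$ is a nonzero isogeny, hence surjective on $\bar{k}$-points, so $\sigma$ would have a fixed point. Since our $\sigma$ has none and $\sigma(\oO) = T$, it follows that $\alpha = 1$, i.e.\ $\sigma = \tau_T$; as $\sigma$ is a $4$-cycle, $\tau_T^4 = 1$ with $T$ of exact order~$4$, giving part~(2) and the first line of~(3). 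In particular $2T$ is a nontrivial rational $2$-torsion point, so $C$ is ordinary and $j(C) \ne 0 = 12^3$; therefore $\mathrm{Aut}(C,\oO) = \{\,[\pm1]\,\}$. Since $\rho$ fixes $\oO$ and is not the identity on $C$ (it moves $T$), we get $\rho = [-1]$, completing~(3).

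Finally, for the embedding divisor class I would exploit the symmetry just proved. Because $[-1]$ is linear the model is symmetric, so Lemma~\ref{lem:divisor_class} gives $S \in C[2]$ and lets me compute the section summing to $S$ on any convenient hyperplane. I would take the $[-1]$-stable hyperplane $X_0 = X_2$: substituting forces $X_1 X_3 = 0$ and $X_1 + X_3 = c X_0$, whose only solutions are $T = (1:c:1:0)$ and $3T = (1:0:1:c)$. Since $[-1]$ fixes $X_0 - X_2$ it preserves this section and interchanges $T$ with $-T = 3T$, so the section is $m(T) + m(-T)$ for some $m$, which sums to $\oO$ under the group law. Hence $S = \oO$ and the embedding divisor class is $4(\oO)$, proving~(1). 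The main obstacle is exactly this last step, namely distinguishing $S = \oO$ (class $4(\oO)$) from $S = 2T$ (class $3(\oO)+(S)$, as occurs for the $\ZZ/4\ZZ$-form); the conjugate-pair argument is what makes the characteristic-$2$ degeneration $S = 2R \mapsto \oO$ precise while sidestepping any local multiplicity computation.
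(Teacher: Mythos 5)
Your proof is correct, and its overall skeleton matches the paper's: verify that the two coordinate maps preserve the quadrics, identify the cyclic shift with $\tau_T$ via fixed-point-freeness and $\sigma(\oO)=T$, identify the transposition with $[-1]$ as a nontrivial automorphism fixing $\oO$, and read off the embedding divisor class from a hyperplane section via Lemma~\ref{lem:divisor_class}. The genuine divergence is in part (1). The paper intersects $C$ with the coordinate hyperplanes $X_i = 0$, each of which meets the curve in a \emph{single} point $T_{i+2} = (i+2)T$ (e.g.\ $X_2 = 0$ forces $X_1 = X_3$ from the second quadric and then $X_0 = cX_1$ from the first, leaving only $\oO$), so the section is $4(T_{i+2})$ by degree alone and sums to $4(i+2)T = \oO$ with no multiplicity analysis whatsoever. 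You instead chose the $[-1]$-stable hyperplane $X_0 + X_2 = 0$, whose support $\{T, -T\}$ is a conjugate pair, and then needed the symmetry argument to force equal multiplicities; that step is genuinely load-bearing in your route, since a section of the form $3(T) + (-T)$ would sum to $2T \ne \oO$, and you were right to supply it rather than wave at it. So the paper's choice of hyperplane buys a shorter computation, while your choice buys a structural argument that sidesteps local multiplicities, as you note. For parts (2)--(3) your argument is a rigorous expansion of what the paper compresses into one sentence by analogy with Theorem~\ref{thm:Z4Symmetries}: the decomposition $\sigma = \tau_Q \circ \alpha$ with $\alpha - 1$ surjective whenever $\alpha \ne 1$, and the identification $\mathrm{Aut}(C,\oO) = \{[\pm1]\}$ from ordinariness (a rational point of exact order $2$, namely $2T$, rules out supersingularity, so $j(C) \ne 0 = 12^3$ in characteristic~$2$) are precisely the justifications the paper leaves implicit, and both are sound.
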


\begin{proof}
As in Theorem~\ref{thm:Z4Symmetries}, the correctness of 
automorphisms is implied by action on the points $\oO$ 
and $T$, and the relation $\tau_T^4 = 1$ shows that $T$ 
is $4$-torsion. 
Since the hyperplanes $X_i = 0$ cut out the divisors 
$4(T_{i+2})$ where $T_{k} = kT$, and $T$ is $4$-torsion, 
this gives the form of the embedding divisor class.
\qed
\end{proof}

\begin{lemma}
\label{lem:Mu4Weierstrass}
An elliptic curve in split $\bbmu_4$-normal form is isomorphic 
to the curve $Y(Y + X)Z = X(X + c^{-2}Z)^2$ in Weierstrass form.  
The linear map $(X:Y:Z) = (c(X_1+X_3):X_0+cX_1+X_2:c^4X_2)$
defines the isomorphism except at $\oO$. 
\end{lemma}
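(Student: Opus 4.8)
The plan is to follow the proof of Lemma~\ref{lem:Z4Weierstrass} almost verbatim, since the two statements are strictly parallel. The existence of \emph{some} linear map realizing a plane Weierstrass model as the image of the split $\bbmu_4$-normal form is guaranteed by Kohel~\cite[Lemma~3]{Kohel}: by Theorem~\ref{thm:Mu4Symmetries} the embedding divisor class of $C$ is $4(\oO)$, while a Weierstrass model has class $3(\oO)$, so the inclusion of linear systems $|3(\oO)| \subseteq |4(\oO)|$ exhibits the cubic model as a linear projection of $C$ centered at $\oO$. What remains is therefore to confirm that the specific tuple $(X:Y:Z) = (c(X_1+X_3):X_0+cX_1+X_2:c^4X_2)$ realizes this projection and to pin down its locus of indeterminacy.

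First I would verify that the image lands on the stated curve. Working in characteristic~$2$, the defining equations of $C$ read $X_0^2+X_2^2 = c^2 X_1 X_3$ and $X_1^2+X_3^2 = c^2 X_0 X_2$. The simplification that makes the computation tractable is the identity $X^2 = c^2(X_1+X_3)^2 = c^2(X_1^2+X_3^2) = c^4 X_0 X_2 = X_0 Z$, valid modulo the second quadric. Substituting this repeatedly, I would expand $Y(Y+X)Z - X(X+c^{-2}Z)^2$ as a homogeneous cubic in the $X_i$ and reduce it modulo the ideal generated by the two quadrics, checking that the remainder is zero. This polynomial reduction is the routine but delicate core of the argument, and is the step I expect to be the main obstacle, as it requires tracking both relations simultaneously through the cubic expansion.

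Finally I would identify the exceptional locus. The three coordinate forms vanish simultaneously only at the single point $(c:1:0:1)=\oO$ (forcing $X_2 = 0$, $X_1 = X_3$, $X_0 = cX_1$), which lies on $C$, so the formulas define a morphism on $C \setminus \{\oO\}$. Because $C$ is a smooth curve this morphism extends across $\oO$, which is sent to the point at infinity $(0:1:0)$, the identity of the Weierstrass model. Since the existence of a linear isomorphism is already furnished by Kohel~\cite[Lemma~3]{Kohel} (equivalently, projection of a nondegenerate quartic from one of its points is birational onto its cubic image, and a birational morphism of smooth projective curves is an isomorphism), the verified formula is the asserted isomorphism, exactly as in Lemma~\ref{lem:Z4Weierstrass}.
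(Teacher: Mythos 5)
Your proposal is correct and follows essentially the same route as the paper, which likewise invokes Kohel~\cite[Lemma~3]{Kohel} for the existence of the linear map, states that the explicit formula ``can be easily verified,'' and obtains the exceptional locus from the observation that $X_2 = 0$ meets the curve only at $\oO$. Your additional details --- the reduction identity $X^2 = X_0 Z$, the direct computation that the three forms vanish simultaneously exactly at $(c:1:0:1) = \oO$, and the extension of the morphism across $\oO$ --- simply flesh out the steps the paper leaves implicit.
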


\begin{proof}
As above, the existence of a linear map is implied by 
Kohel~\cite[Lemma~3]{Kohel}, and the exact form of this map 
can be easily verified.  The exceptional divisor of the given 
rational map follows since $X_2 = 0$ only meets the curve at $\oO$. 
\qed
\end{proof}

\noindent{\bf Remark.} 
The rational maps of Lemma~\ref{lem:Z4Weierstrass} and~\ref{lem:Mu4Weierstrass} 
extend to isomorphisms, but there is no base-point free linear representative 
for these isomorphisms. 

\section{Isomorphisms with normal forms}
\label{sec:Isomorphisms}

Let $E_{c^2}$ denote an elliptic curve in $\ZZ/4\ZZ$-normal form and 
$C_{c}$ a curve in $\bbmu_4$-normal form. By Lemmas~\ref{lem:Z4Weierstrass} 
and~\ref{lem:Mu4Weierstrass}, the curves $E_{c^2}$ and $C_{c}$ are isomorphic, 
but by classification of their embedding divisor classes in 
Theorems~\ref{thm:Z4Symmetries} and~\ref{thm:Mu4Symmetries}, 
it follows from Lemma~\ref{lem:translation_linear} that there is no 
linear isomorphism between them.  In this section we obtain a 
classification of curves over with rational $4$-torsion point and 
make the isomorphism explicit for $E_{c^2}$ and $C_{c}$.

\begin{theorem}
\label{thm:normal_form_isomorphism}
Let $X/k$ be an elliptic curve over a field~$k$ of characteristic~$2$, 
with identity $\oO$ and $k$-rational point $T$ of order $4$, and suppose
that $c$ is an element of $k$ such that $j(X) = c^8$.
\begin{enumerate}
\item
There exists a unique isomorphism of $X$ over $k$ to a curve $E_{c^2}$ in 
$\ZZ/4\ZZ$-normal form sending $\oO$ to $(1:0:0:1)$ and $T$ to $(1:1:0:0)$.
\item
There exists a unique isomorphism of $X$ over $k$ to a curve $C_{c}$ in split 
$\bbmu_4$-normal form sending $\oO$ to $(c:1:0:1)$ and $T$ to $(1:c:1:0)$. 
\end{enumerate}  
If $X$ is embedded as a symmetric quartic model in $\PP^3$, then either the 
isomorphism of $X$ with $E_{c^2}$ or the isomorphism with $C_{c}$ is induced 
by a linear automophism of $\PP^3$. 
\end{theorem}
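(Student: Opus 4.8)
The plan is to prove the two existence-and-uniqueness statements through Weierstrass theory and then read off linearity from Lemma~\ref{lem:embedding_class}. First I would note that the hypotheses force $X$ to be ordinary: the point $2T$ is a nontrivial rational $2$-torsion point, which a supersingular curve in characteristic~$2$ does not possess, so $j(X) = c^8 \neq 0$ and $\mathrm{Aut}(X) = \{[\pm1]\}$. By Lemmas~\ref{lem:Z4Weierstrass} and~\ref{lem:Mu4Weierstrass}, both $E_{c^2}$ and $C_c$ are isomorphic over $k$ to a Weierstrass curve of the shape $W : y^2 + xy = x^3 + c^{-4}x$ (expanding the defining cube in characteristic~$2$), and one checks $W$ has $j$-invariant $c^8$; a routine change of variables brings $W$ to $y^2 + xy = x^3 + c^{-8}$.

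The crux — and the step I expect to be the main obstacle — is to show that the isomorphism $X \cong W$, which exists over $\bar{k}$ because the $j$-invariants agree, is already defined over $k$, i.e.\ that $X$ is the trivial twist. Since $\mathrm{Aut}(W) = \{[\pm1]\}$, the $k$-forms of $W$ are its quadratic twists $W^{(d)} : y^2 + xy = x^3 + d\,x^2 + c^{-4}x$, classified by $d \in k/\wp(k)$ with $\wp(t) = t^2 + t$. Placing the rational $2$-torsion point at $(0,0)$ and solving for the points doubling to it, one finds $x(2P) = \lambda^2 + \lambda + d$, so $W^{(d)}$ carries a rational point of order~$4$ precisely when $t^2 + t + d = 0$ is solvable in $k$, i.e.\ when $d \in \wp(k)$. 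Thus the rational $4$-torsion point $T$ on $X$ forces the trivial twist, giving $X \cong W \cong E_{c^2} \cong C_c$ over $k$. This Artin--Schreier computation is the one genuinely arithmetic input; everything else is formal.

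For uniqueness and the prescribed image of $T$, I would argue that any isomorphism carrying $\oO$ to the target identity is a group isomorphism, and two such differ by an element of $\mathrm{Aut} = \{[\pm1]\}$. The image of $T$ is a point of order~$4$, hence equals the prescribed torsion point or its negative; composing with $[-1]$ if necessary sends $\oO$ and $T$ to the required points, and since $[-1]$ moves every point of order~$4$, this normalized isomorphism is unique. This establishes parts~(1) and~(2).

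Finally I would deduce the linearity claim from Lemma~\ref{lem:embedding_class}. As a symmetric quartic model in $\PP^3$, $X$ has embedding divisor class $3(\oO) + (S)$ with $S \in X[2](k)$ by Lemma~\ref{lem:divisor_class}, and ordinarity gives $X[2](k) = \{\oO, 2T\}$. By Theorems~\ref{thm:Z4Symmetries} and~\ref{thm:Mu4Symmetries} the corresponding points are $S_E = 2T_E$ on $E_{c^2}$ and $S_C = \oO$ on $C_c$. If $S = 2T$, the isomorphism $\phi$ of part~(1) satisfies $\phi(S) = 2\phi(T) = 2T_E = S_E$, so $\phi$ is linear; if $S = \oO$, the isomorphism $\psi$ of part~(2) satisfies $\psi(S) = \psi(\oO) = \oO = S_C$, so $\psi$ is linear. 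As $S$ is one of these two points, exactly one of the two isomorphisms is linear, which is the desired conclusion.
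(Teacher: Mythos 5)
Your proposal is correct and follows essentially the same route as the paper's proof: equality of $j$-invariants ($=c^8\neq 0$ by ordinarity) gives the isomorphism over $\bar{k}$, the rational $4$-torsion point trivializes the quadratic twist, and the dichotomy $3(\oO)+(S)$ versus $4(\oO)$ for the embedding divisor class (Lemma~\ref{lem:divisor_class}) combined with Lemma~\ref{lem:embedding_class} yields linearity of exactly one of the two isomorphisms. The only difference is one of detail: where the paper asserts in a single sentence that the rational $4$-torsion point ``fixes the quadratic twist'' and leaves the uniqueness claim implicit, you supply the actual Artin--Schreier computation (twists $W^{(d)}$ classified by $d\in k/\wp(k)$, with $x(2P)=\lambda^2+\lambda+d$ showing a rational order-$4$ point forces the trivial class) and the normalization argument via $\mathrm{Aut}(X)=\{[\pm1]\}$ acting freely on the order-$4$ points $\pm T$ --- both of which are correct and faithfully fill in what the paper's proof takes for granted.
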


\begin{proof} 
The $j$-invariants of $E_{c^2}$ and $C_{c}$ are each $c^8$ ($\ne 0$ 
since $X$ is not supersingular by existence of a $2$-torsion point), 
which implies the existence of the isomorphisms over the algebraic 
closure. 
The rational $4$-torsion point $T$ fixes the quadratic twist, hence 
the isomorphism is defined over $k$. 
Since there is a unique $2$-torsion point $S = 2T$, the embedding 
divisor of $X$ in $\PP^3$ is either $3(\oO) + (S)$ or $4(\oO)$ by 
Lemma~\ref{lem:divisor_class}. 
In the former case, the isomorphism to $E_{c^2}$ is linear, and 
in the latter case the isomorphism to $C_{c}$ is linear by 
Lemma~\ref{lem:embedding_class}.
\qed \end{proof}

\noindent
The following theorem classifies the isomorphisms between $E_{c^2}$ and $C_{c}$.

\begin{theorem}
\label{thm:mu4-to-C4-isomorphism}
Let $C_{c}$ be an elliptic curve in split $\bbmu_4$-normal form and $E_{c^2}$ 
an elliptic curve in $\ZZ/4\ZZ$-normal form.  Then there exists an isomorphism 
$\iota : C_{c} \rightarrow E_{c^2}$ determined by the projections 
$$
\begin{array}{r@{\,}c@{\,}l}
\pi_1 \circ \iota((X_0:X_1:X_2:X_3)) & = &  
\left\{
\begin{array}{c}
(c X_0 : X_1 + X_3),\\ 
(X_1 + X_3 : c X_2),
\end{array}
\right.\\
\pi_2 \circ \iota((X_0:X_1:X_2:X_3)) & = & 
\left\{
\begin{array}{c}
(X_0 + X_2 : c X_1),\\ 
(c X_3 : X_0 + X_2).
\end{array}\right.
\end{array}
$$
The morphism to $E_{c^2}$ is recovered by composing $\pi_1 \times \pi_2$ with 
the skew-Segre embedding.  The inverse morphism is given by 
\begin{center}
\scalebox{0.90}{
$
\iota^{-1}(X_0:X_1:X_2:X_3) = 
\left\{
\begin{array}{c}
    ( X_0 X_1 + X_2 X_3 : c X_2^2 : X_0 X_1 + c^2 X_1 X_2 + X_2 X_3 : c X_1^2 ),\\
    ( X_0 X_3 : (X_2 + X_3)^2 : X_1 X_2 : (X_0 + X_1)^2 ),\\
    ( (X_0 + X_3)^2 : c X_2 X_3 : (X_1 + X_2)^2 : c X_0 X_1 ),\\
    ( c X_3^2 : X_0 X_3 + X_1 X_2 + c^2 X_2 X_3 :c X_2^2 : X_1 X_2 + X_0 X_3 ).
\end{array}
\right.
$
} 
\end{center}
Neither $\iota$ nor its inverse can be represented by a projective linear 
transformation. 
\end{theorem}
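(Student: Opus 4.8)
The plan is to split the statement into its \emph{qualitative} content---that an isomorphism $\iota$ exists and that neither it nor its inverse is projectively linear---and its \emph{explicit} content---that the displayed tuples define $\iota$ and $\iota^{-1}$. The qualitative part follows at once from results already in hand, while the explicit part reduces to polynomial identities modulo the two defining quadrics in characteristic~$2$.

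First I would settle existence and nonlinearity. Applying Theorem~\ref{thm:normal_form_isomorphism} with $X = C_c$ (whose $j$-invariant is $c^8$) produces an isomorphism $C_c \to E_{c^2}$ carrying $\oO$ to $\oO$; equivalently one may compose the linear Weierstrass isomorphisms of Lemmas~\ref{lem:Mu4Weierstrass} and~\ref{lem:Z4Weierstrass}, both of which identify their source with $Y(Y+X)Z = X(X+c^{-2}Z)^2$. For nonlinearity I would invoke Lemma~\ref{lem:embedding_class}: by Theorems~\ref{thm:Mu4Symmetries} and~\ref{thm:Z4Symmetries} the distinguished points of the embedding divisor classes are $S_C = \oO$ for $C_c$ (class $4(\oO)$) and $S_E = 2T$ for $E_{c^2}$ (class $3(\oO)+(S_E)$, with $S_E \ne \oO$ a genuine $2$-torsion point). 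A linear isomorphism must send $S_C$ to $S_E$; but any isomorphism of elliptic curves fixes $\oO$, so $\iota(S_C) = \iota(\oO) = \oO \ne S_E$. The same reasoning in the reverse direction shows that any isomorphism $E_{c^2} \to C_c$ sends the order-$2$ point $S_E = 2T$ to a point of order $2$, which cannot equal $S_C = \oO$; hence neither direction can be linear. This is precisely the obstruction already noted before Theorem~\ref{thm:normal_form_isomorphism}.

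Next I would check that the displayed projections define a morphism into $E_{c^2}$. The two representatives of $\pi_1 \circ \iota$ agree on $C_c$ because the cross-relation $(cX_0)(cX_2) = (X_1+X_3)^2$ is, in characteristic~$2$, exactly the defining equation $X_1^2 - X_3^2 = c^2 X_0 X_2$; likewise the two representatives of $\pi_2 \circ \iota$ agree because $(X_0+X_2)^2 = c^2 X_1 X_3$ is the equation $X_0^2 - X_2^2 = c^2 X_1 X_3$. Hence $(\pi_1\circ\iota,\pi_2\circ\iota)$ is a well-defined morphism $C_c \to \PP^1\times\PP^1$, and composing with the skew-Segre embedding of Theorem~\ref{thm:Z4Symmetries} lands automatically in the quadric $X_0 X_2 = X_1 X_3$; substituting the two $\bbmu_4$-relations then verifies the remaining $\ZZ/4\ZZ$-equation $X_0^2 + X_1^2 + X_2^2 + X_3^2 = c^2 X_0 X_2$. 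Evaluating at $\oO = (c:1:0:1)$ gives $(1:0:0:1) = \oO$ (and at $T = (1:c:1:0)$ gives $(0:0:1:1) = 3T$), so $\iota$ is a morphism of curves fixing the identity, hence an isogeny; that it is in fact the isomorphism of the first step is confirmed once the inverse is verified.

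The main obstacle is the explicit inverse. The four alternative tuples for $\iota^{-1}$ reflect that this morphism has higher degree in the coordinates and admits no single base-point-free representative: each tuple defines $\iota^{-1}$ off its own zero locus, and one must check that the four agree pairwise modulo the defining ideal of $E_{c^2}$ and that their zero loci have no common point on $E_{c^2}$, so that the charts cover the curve. The clean finish is then a direct reduction showing that $\iota\circ\iota^{-1}$ and $\iota^{-1}\circ\iota$ collapse to the respective identities modulo the quadrics defining $E_{c^2}$ and $C_c$; this simultaneously upgrades the isogeny $\iota$ of the previous step to an isomorphism with the stated inverse. Each such reduction is a routine polynomial manipulation in characteristic~$2$, but they are long and chart-dependent, so the difficulty is entirely in the bookkeeping. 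As an alternative that sidesteps the four-chart analysis, one can instead obtain $\iota^{-1}$ as the composite $\phi_C^{-1}\circ\phi_E$ through the common Weierstrass model, at the cost of computing the degree-two inverse of the linear Weierstrass parametrization of $E_{c^2}$.
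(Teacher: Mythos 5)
Your proposal is correct and takes essentially the same route as the paper: the paper's own proof consists of exactly two ingredients, namely that the correctness of $\iota$ and its inverse ``can be verified explicitly'' (it delegates this to symbolic computation in Echidna) and that nonlinearity follows from the classification of the embedding divisor classes in Theorems~\ref{thm:Z4Symmetries} and~\ref{thm:Mu4Symmetries}. Your deviations are only in presentation: you flesh out by hand the verification the paper outsources (agreement of the two charts of each $\pi_i\circ\iota$ via the defining quadrics, the image equation, evaluation at $\oO$), and you argue nonlinearity through Lemma~\ref{lem:embedding_class} applied to $S_C=\oO$ versus $S_E=2T$ where the paper cites Lemma~\ref{lem:translation_linear}, but both arguments rest on the same obstruction $4(\oO)\ne 3(\oO)+(S)$.
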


\begin{proof} 
This correctness of this isomorphism can be verified explicitly 
(e.g.\ as implemented in Echidna~\cite{Echidna}). 
The nonexistence of a linear isomorphism is a consequence of 
Lemma~\ref{lem:translation_linear} and the classification of 
the embedding divisor classes in Theorems~\ref{thm:Z4Symmetries}
and~\ref{thm:Mu4Symmetries}.
\qed \end{proof}

\section{Addition law structure and efficient arithmetic}
\label{sec:AdditionLaws}

The interest in alternative models of elliptic curves has been driven by the 
simple form of their {\it addition laws} --- the polynomial maps which define 
the addition morphism $\mu: E \times E \rightarrow E$ as rational maps.  
In this section we determine bases of simple forms for the addition laws of 
the $\ZZ/4\ZZ$-normal form and of the $\bbmu_4$-normal form.

The verification that a system of putative addition laws determines 
a well-defined morphism can be verified symbolically. In particular we 
refer to the implementations of these models and their addition laws in 
Echidna~\cite{Echidna} (in the Magma~\cite{Magma} language) for a 
verification that the systems are consistent and define rational maps. 
The dimensions of the spaces of given bidegree are known {\it a~priori} 
by Kohel~\cite{Kohel}, as well as their completeness as morphisms.
By the Rigidity Theorem~\cite[Theorem 2.1]{Milne-AV}, a morphism $\mu$ 
of abelian varieties is the composition of a homomorphism and translation.
In order to verify that $\mu : E \times E \rightarrow E$ is the addition 
morphism, it suffices to check that the restrictions of $\mu$ to 
$E \times \{\oO\}$ and $\{\oO\} \times E$ agree with the restrictions 
of $\pi_1$ and $\pi_2$, respectively.  Similarly, for a particular 
addition law of bidegree $(2,2)$, the exceptional divisors, on which 
the polynomials of the addition law simultaneously vanish, are known 
by Lange and Ruppert~\cite{LangeRuppert} and the generalizations in 
Kohel~\cite{Kohel} to have components of the form 
$
\Delta_P = \{ (P+Q,Q) \;|\; Q \in E \}.
$
Consequently, as pointed out in Kohel~\cite{Kohel} (Corollary~11 and 
the Remark following Corollary~12), the exceptional divisors can be 
computed (usually by hand) by intersecting with $E \times \{\oO\}$.

\subsection*{Addition law structure for the $\ZZ/4\ZZ$-normal form}

\begin{theorem}
\label{thm:C4-addition-laws}
Let $E/k$, $\mathrm{char}(k) = 2$, be an elliptic curve in $\ZZ/4\ZZ$-normal 
form:
$$
(X_0 + X_1 + X_2 + X_3)^2 = c X_0 X_2 = c X_1 X_3.
$$
Bases for the bilinear addition law projections $\pi_1 \circ \mu$ and 
$\pi_2 \circ \mu$ are, respectively:
$$
\left\{
\begin{array}{l}
(X_0 Y_3 + X_2 Y_1,\ X_1 Y_0 + X_3 Y_2),\\
(X_1 Y_2 + X_3 Y_0,\ X_0 Y_1 + X_2 Y_3)
\end{array}
\right\}
\mbox{ and }
\left\{
\begin{array}{l}
(X_0 Y_0 + X_2 Y_2,\ X_1 Y_1 + X_3 Y_3),\\
(X_1 Y_3 + X_3 Y_1,\ X_0 Y_2 + X_2 Y_0)
\end{array} 
\right\}\cdot
$$
Addition laws of bidegree $(2,2)$ 
are recovered by composition with the skew-Segre embedding   
$s((U_0:U_1),(V_0:V_1)) = (U_0 V_0:U_1 V_0:U_1 V_1:U_0 V_1)$. 
Each of these basis elements has an exceptional divisor of 
of the form $2\Delta_{nT}$ for some $0 \le n \le 3$. 
\end{theorem}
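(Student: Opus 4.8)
The plan is to treat the three assertions of the theorem in turn — that the listed tuples are addition law projections, that they form bases, and that each has the stated exceptional divisor — leaning on the three tools assembled just before the statement: the Rigidity Theorem for correctness, Kohel's \emph{a priori} dimension count for completeness, and the Lange--Ruppert/Kohel description of exceptional divisors as effective sums of translates $\Delta_P$, combined with the trick of intersecting against $E \times \{\oO\}$.

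First I would check correctness. Each listed pair, say $(X_0 Y_3 + X_2 Y_1,\, X_1 Y_0 + X_3 Y_2)$, is bilinear and hence defines a rational map $E \times E \to \PP^1$; what must be shown is that it equals $\pi_1 \circ \mu$ (resp.\ $\pi_2 \circ \mu$). Following the recipe recalled above, I would assemble a bidegree-$(2,2)$ candidate $\mu'$ by composing $\pi_1 \times \pi_2$ with the skew-Segre embedding and invoke Rigidity: $\mu'$ is the addition morphism once its restrictions to $E \times \{\oO\}$ and $\{\oO\} \times E$ reduce to the identity, so that their projections reduce to $\pi_1$ and $\pi_2$. Concretely this amounts to substituting $\oO = (1:0:0:1)$ and checking each pair collapses to one of the two-patch representatives of the corresponding projection from Theorem~\ref{thm:Z4Symmetries}: for the pair above, setting $y = \oO$ yields $(X_0 : X_1)$ and setting $x = \oO$ yields $(Y_3 : Y_2)$, matching the two patches of $\pi_1$. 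The one remaining requirement — that the two patch-representatives agree on overlaps modulo the defining ideal of $E$ — is the routine symbolic consistency check already referenced in the text.

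Next, for completeness, the two members of each set have disjoint monomial supports and so are linearly independent; since Kohel's results give the dimension of the space of bidegree-$(1,1)$ addition-law projections as $2$, each pair is a basis. The bidegree-$(2,2)$ addition laws are then recovered, as asserted, by composing $\pi_1 \times \pi_2$ with the skew-Segre map $s$.

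The exceptional divisors are where the real work lies, and the main obstacle is pinning down the multiplicity. A priori the exceptional divisor of each bilinear projection is an effective sum of translates $\Delta_P$. Restricting to a single fibre $E \times \{Q\}$, the map becomes $\pi_1 \circ \tau_Q$, a degree-$2$ map to $\PP^1$ expressed by two hyperplane sections (degree-$4$ classes on $E$); hence its base locus has degree $4 - 2 = 2$ on every fibre, forcing the total exceptional divisor to be either $\Delta_P + \Delta_{P'}$ or $2\Delta_P$. I would then identify $P$ by the intersection trick: setting $y = \oO$ and solving on $E$. For the pair above this gives $X_0 = X_1 = 0$, which on the curve forces $X_2 = X_3$, i.e.\ the point $(0:0:1:1) = 3T$; the analogous computations for the three remaining basis elements land on $T$, $2T$, and $\oO$, covering all $0 \le n \le 3$. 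Finally I would certify the factor $2$ rather than a split into two distinct sections: the line $X_0 = X_1 = 0$ lies on the quadric $X_0 X_2 = X_1 X_3$ and meets the other quadric $(X_0 + X_1 + X_2 + X_3)^2 = c X_0 X_2$ in the doubled point, since there the equation degenerates to $(X_2 + X_3)^2 = 0$. Thus the fibre base locus is a single point of length $2$ and the exceptional divisor is $2\Delta_{3T}$. The delicate point I would watch most carefully is carrying this length-$2$ computation correctly in characteristic~$2$, where $(X_2 + X_3)^2$ genuinely collapses to a double root, for each of the four elements, so as to exclude a spurious pair of distinct translates and certify the form $2\Delta_{nT}$.
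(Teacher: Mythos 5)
Your proposal is correct and follows essentially the same route as the paper's proof: correctness via the Rigidity Theorem by substituting $\oO = (1:0:0:1)$ into each pair and recovering the two patches of $\pi_1$ and $\pi_2$, the basis property from Kohel's \emph{a priori} dimension count, and the exceptional divisors via intersection with $E \times \{\oO\}$, yielding $2\Delta_{3T}$, $2\Delta_T$, $2\Delta_{2T}$, $2\Delta_{\oO}$ exactly as in the paper. Your fibre-wise degree count ($4-2=2$, forcing $\Delta_P + \Delta_{P'}$ or $2\Delta_P$) and the length-two computation from $(X_2+X_3)^2 = 0$ in characteristic $2$ merely spell out the multiplicity assertion that the paper states tersely, citing Lange--Ruppert and Kohel.
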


\begin{proof}
That the addition laws determine a well-defined morphism is verified 
symbolically.\!\footnote{%
In Echidna~\cite{Echidna}, the constructor is 
{\tt EllipticCurve\_C4\_NormalForm} after which {\tt AdditionMorphism} 
returns this morphism as a composition.}
The morphism is the addition morphism since the substitution 
$(Y_0,Y_1,Y_2,Y_3) = (1,0,0,1)$, gives the projection onto the 
first factor.  By symmetry of the spaces in $X_i$ and $Y_i$, 
the same holds for the second factor. 

The form of the exceptional divisor is verified by a similar
substitution.  For example, for the exceptional divisor
$
X_1 Y_2 + X_3 Y_0 = X_0 Y_1 + X_2 Y_3 = 0,
$
we intersect with $(Y_0,Y_1,Y_2,Y_3) = (1,0,0,1)$ to find 
$X_3 = X_2 = 0$, which defines the unique point $T = (1,1,0,0)$
with a multiplicity of 2, hence the exceptional divisor is 
$2\Delta_{T}$. The other exceptional divisors are determined 
similarly.
\qed 
\end{proof}

\noindent{\bf Remark.}
We observe that the entire space of addition laws of bidgree 
$(2,2)$ is independent of the curve parameters.  This is not 
a feature of the Edwards addition laws. 

\begin{corollary}
Addition of generic points on $E$ can be carried out in $12\Mul$. 
\end{corollary}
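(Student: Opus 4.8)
The plan is to count field multiplications directly from the factored form of the addition law established in Theorem~\ref{thm:C4-addition-laws}. The key structural observation is that the bidegree-$(2,2)$ addition law is obtained as the composition of the two bilinear projections $\pi_1 \circ \mu$ and $\pi_2 \circ \mu$ with the skew-Segre embedding $s$. This lets me tally the cost of each stage separately and add them up.

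First I would evaluate one basis element of $\pi_1 \circ \mu$, say $(U_0 : U_1) = (X_0 Y_3 + X_2 Y_1,\ X_1 Y_0 + X_3 Y_2)$. Each of the two coordinates is a sum of two products of coordinate functions, so computing $(U_0,U_1)$ costs $4\Mul$ (field additions being ignored). Symmetrically, evaluating one basis element of $\pi_2 \circ \mu$, say $(V_0 : V_1) = (X_0 Y_0 + X_2 Y_2,\ X_1 Y_1 + X_3 Y_3)$, costs another $4\Mul$. I would then apply the skew-Segre embedding $s((U_0:U_1),(V_0:V_1)) = (U_0 V_0 : U_1 V_0 : U_1 V_1 : U_0 V_1)$, which requires exactly the four products $U_0 V_0$, $U_1 V_0$, $U_1 V_1$, $U_0 V_1$, i.e.\ $4\Mul$. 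Summing the three stages gives $4\Mul + 4\Mul + 4\Mul = 12\Mul$, as claimed.

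The one point that requires genuine justification is that a single choice of addition law from each projection suffices for a \emph{generic} pair of points. By Theorem~\ref{thm:C4-addition-laws}, each chosen basis element has exceptional divisor of the form $2\Delta_{nT}$, a proper closed subvariety of $E \times E$; hence for $(P,Q)$ lying outside this locus the two selected projection laws are simultaneously defined and their skew-Segre composition returns the correct sum $P+Q$. Thus the main (and essentially the only) obstacle is bookkeeping, and no new idea is needed beyond the factored structure already proved: I would simply note that the generic condition is met off a proper subvariety and record the multiplication count above.
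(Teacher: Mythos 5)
Your proposal is correct and follows essentially the same route as the paper: $4\Mul$ for one bilinear basis element of each projection $\pi_1\circ\mu$ and $\pi_2\circ\mu$, plus $4\Mul$ for the skew-Segre composition, totalling $12\Mul$. Your added remark that generic points lie off the exceptional divisors $2\Delta_{nT}$ merely makes explicit what the paper leaves implicit in the word ``generic,'' so there is no substantive difference.
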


\begin{proof}
Since each of the pairs is equivalent under a permutation of the input 
variables it suffices to consider the first, which each require $4\Mul$.
Composition with the skew-Segre embedding requires an additional $4\Mul$, 
which yields the bound of $12\Mul$.
\qed
\end{proof}

\noindent
Evaluation of the addition forms along the diagonal yields the duplication 
formulas.

\begin{corollary}
Let $E = E_{c}$ be an elliptic curve in $\ZZ/4\ZZ$-normal form.  
The duplication morphism on $E$ is given by 
$$
\begin{array}{r@{\,}c@{\,}l}
\pi_1 \circ [2](X_0:X_1:X_2:X_3) & = & (X_0 X_3 + X_1 X_2 : X_0 X_1 + X_2 X_3),\\
\pi_2 \circ [2](X_0:X_1:X_2:X_3) & = & ((X_0 + X_2)^2 : (X_1 + X_3)^2),
\end{array}
$$
composed with the skew-Segre embedding. 
\ignore{
The resulting degree $4$ defining polynomials for the isogeny are unique, 
up to the defining ideal, and equivalent to 
$$
\begin{array}{l}
(
  X_0 X_3^3 + c X_0 X_1 X_3^2 + X_1 X_2^3 :
  X_0 X_1^3 + c X_0 X_1^2 X_3 + X_2^3 X_3 :\\
\multicolumn{1}{r}{\,
  X_0 X_1^3 + X_2^3 X_3 + c X_1 X_2 X_3^2 :
  X_0 X_3^3 + c X_1^2 X_2 X_3 + X_1 X_2^3
).}
\end{array}
$$
}
\end{corollary}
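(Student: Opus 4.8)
The plan is to realize the duplication morphism as the restriction of the addition morphism to the diagonal, $[2] = \mu \circ \Delta$ where $\Delta(P) = (P,P)$, and hence to read off the duplication formulas by the specialization $Y_i \mapsto X_i$ in the bilinear addition law projections of Theorem~\ref{thm:C4-addition-laws}. First I would recall that each of $\pi_1 \circ \mu$ and $\pi_2 \circ \mu$ is spanned by a two-element basis of bilinear maps, and that each basis element has an exceptional divisor of the form $2\Delta_{nT}$ for some $0 \le n \le 3$. Substituting $Y_i = X_i$ into a given basis element produces a candidate for $\pi_i \circ [2]$, valid provided the diagonal does not lie in the exceptional locus of that law.

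The step requiring care is the choice of which basis element to restrict. The diagonal $\{(P,P)\}$ is exactly $\Delta_{\oO} = \Delta_{0\cdot T}$, so a basis law whose exceptional divisor is $2\Delta_{\oO}$ (the case $n=0$) vanishes identically on the diagonal and cannot be used; one must instead restrict a law with $n \ne 0$, whose exceptional divisor $2\Delta_{nT}$ meets the diagonal only along a proper closed subset. For the $\pi_1$ projection, the second basis element has exceptional divisor $2\Delta_T$, as computed in the proof of Theorem~\ref{thm:C4-addition-laws}, so its restriction is valid; setting $Y_i = X_i$ and using commutativity gives $\pi_1 \circ [2] = (X_0 X_3 + X_1 X_2 : X_0 X_1 + X_2 X_3)$, and the first basis element restricts to the same pair. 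For the $\pi_2$ projection, I would observe that the second basis element restricts to $(2X_1 X_3 : 2X_0 X_2) = (0:0)$ in characteristic~$2$, confirming its exceptional divisor is $2\Delta_{\oO}$, and so instead use the first basis element, whose restriction $(X_0^2 + X_2^2 : X_1^2 + X_3^2)$ I would rewrite via the characteristic-$2$ Frobenius identity $a^2 + b^2 = (a+b)^2$ as $((X_0 + X_2)^2 : (X_1 + X_3)^2)$.

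Finally, I would assemble the two projection formulas into the stated duplication morphism by composing $\pi_1 \times \pi_2$ with the skew-Segre embedding $s((U_0:U_1),(V_0:V_1)) = (U_0 V_0 : U_1 V_0 : U_1 V_1 : U_0 V_1)$, exactly as in Theorem~\ref{thm:C4-addition-laws}; since duplication is $\mu$ restricted to the diagonal and the two chosen laws are nonzero there, this recovers $[2]$ on a dense open subset of $E$ and therefore as a morphism. The main obstacle is not the computation, which is a one-line substitution, but the bookkeeping of exceptional divisors: for each projection one must confirm that the restricted law does not contain $\Delta_{\oO}$ in its exceptional locus, since otherwise the naive specialization returns the indeterminate $(0:0)$ in place of the duplication formula.
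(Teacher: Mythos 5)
Your proposal is correct and follows exactly the paper's (one-line) argument: the paper simply states that evaluation of the addition law projections of Theorem~\ref{thm:C4-addition-laws} along the diagonal yields the duplication formulas, which is precisely your substitution $Y_i = X_i$. Your additional bookkeeping is sound and in fact slightly stronger than stated: since $\Delta_{nT}$ is \emph{disjoint} from $\Delta_{\oO}$ for $n \ne 0$, the chosen laws are nowhere-vanishing on the diagonal, so the formulas define $[2]$ everywhere, not merely on a dense open subset.
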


\noindent
This immediately gives the following complexity for duplication.

\begin{corollary}
Duplication on $E$ can be carried out in $7\Mul + 2\Sqr$. 
\end{corollary}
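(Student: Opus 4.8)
The plan is to take the explicit duplication morphism of the preceding corollary and simply count field operations, after one Karatsuba-style reorganization that shares intermediate quantities between the two projections. Writing $(U_0:U_1) = \pi_1 \circ [2]$ and $(V_0:V_1) = \pi_2 \circ [2]$, the duplicated point is recovered as the skew-Segre image $(U_0 V_0 : U_1 V_0 : U_1 V_1 : U_0 V_1)$, so the total cost decomposes as the cost of forming $U_0, U_1$, plus that of $V_0, V_1$, plus that of the four Segre products.

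First I would handle $\pi_2$. Setting $A = X_0 + X_2$ and $B = X_1 + X_3$, the coordinates $V_0 = A^2$ and $V_1 = B^2$ cost exactly $2\Sqr$ after two (free) additions. The one piece of cleverness in the argument is that these same sums $A$ and $B$ also control $\pi_1$: expanding the product gives the identity
$$
A B = (X_0 + X_2)(X_1 + X_3) = (X_0 X_3 + X_1 X_2) + (X_0 X_1 + X_2 X_3) = U_0 + U_1.
$$
Hence I would compute $AB$ with a single $\Mul$ and then $U_0 = X_0 X_3 + X_1 X_2$ with two further multiplications; in characteristic~$2$ the remaining coordinate is recovered for free as $U_1 = AB + U_0$. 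This yields the pair $(U_0, U_1)$ in $3\Mul$ rather than the naive $4\Mul$.

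Finally I would apply the skew-Segre embedding, whose four outputs $U_0 V_0, U_1 V_0, U_1 V_1, U_0 V_1$ form the outer product of $(U_0,U_1)$ and $(V_0,V_1)$ and therefore require $4\Mul$. Summing the three stages gives $3\Mul + 2\Sqr + 4\Mul = 7\Mul + 2\Sqr$, with no multiplication by a constant, since no curve parameter appears in the duplication formula. The only nonroutine step is recognizing the Karatsuba relation $AB = U_0 + U_1$ and reusing the sums $A,B$ already demanded by $\pi_2$; once this is in hand, the operation count is pure bookkeeping.
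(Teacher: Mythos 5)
Your proposal is correct and matches the paper's own proof: both compute the pair $(X_0X_3+X_1X_2,\, X_0X_1+X_2X_3)$ in $3\Mul$ via the Karatsuba factorization $(X_0+X_2)(X_1+X_3) = (X_0X_3+X_1X_2)+(X_0X_1+X_2X_3)$, spend $2\Sqr$ on $(X_0+X_2)^2$ and $(X_1+X_3)^2$, and finish with $4\Mul$ for the skew-Segre products. Your write-up merely makes explicit the bookkeeping (reusing $A=X_0+X_2$, $B=X_1+X_3$) that the paper leaves implicit.
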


\begin{proof}
The pair $(X_0 X_3 + X_1 X_2, X_0 X_1 + X_2 X_3)$ can be computed 
with $3\Mul$ by exploiting the usual Karatsuba trick using 
the factorization of their sum:
$$
(X_0 X_3 + X_1 X_2) +  (X_0 X_1 + X_2 X_3) = (X_0 + X_2) (X_1 + X_3).
$$
After the two squarings, the remaining $4\Mul$ come from the 
Segre morphism.
\qed
\end{proof}

\subsection*{Addition law structure for the split $\bbmu_4$-normal form}

\begin{theorem}
\label{thm:mu4-addition-laws}
Let $C$ be an elliptic curve in split $\bbmu_4$-normal form:
$$
(X_0+X_2)^2 = c^2\,X_1 X_3,\ 
(X_1+X_3)^2 = c^2\,X_0 X_2.
$$
A basis for the space of addition laws of bidegree $(2,2)$ is given by: 
\begin{center}
\scalebox{0.80}{
$
\left\{
\begin{array}{@{}l@{}}
\big(
    (X_0 Y_0 + X_2 Y_2)^2,\,
    c (X_0 X_1 Y_0 Y_1 + X_2 X_3 Y_2 Y_3),\,
    (X_1 Y_1 + X_3 Y_3)^2,\,
    c (X_0 X_3 Y_0 Y_3 + X_1 X_2 Y_1 Y_2)\,
\big),\\
\big(
    c (X_0 X_1 Y_0 Y_3 + X_2 X_3 Y_1 Y_2),\,
    (X_1 Y_0 + X_3 Y_2)^2,\,
    c (X_0 X_3 Y_2 Y_3 + X_1 X_2 Y_0 Y_1),\,
    (X_0 Y_3 + X_2 Y_1)^2
\big),\\
\big(
    (X_3 Y_1 + X_1 Y_3)^2,\,
    c (X_0 X_3 Y_1 Y_2 + X_1 X_2 Y_0 Y_3),\,
    (X_0 Y_2 + X_2 Y_0)^2,\,
    c (X_0 X_1 Y_2 Y_3 + X_2 X_3 Y_0 Y_1)\,
\big),\\
\big(
    c (X_0 X_3 Y_0 Y_1 + X_1 X_2 Y_2 Y_3),\,
    (X_0 Y_1 + X_2 Y_3)^2,\,
    c (X_0 X_1 Y_1 Y_2 + X_2 X_3 Y_0 Y_3),\,
    (X_1 Y_2 + X_3 Y_0)^2
\big).
\end{array}
\right\}
$
} 
\end{center}
The exceptional divisor of each addition law is of the form $4\Delta_{nT}$.
\end{theorem}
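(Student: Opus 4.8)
The plan is to mirror the argument used for Theorem~\ref{thm:C4-addition-laws}, with the one structural difference that the $\bbmu_4$-normal form carries no factorization through $\PP^1 \times \PP^1$ (condition~\ref{cond4} having been dropped), so the four tuples must be treated directly as bidegree $(2,2)$ maps into $\PP^3$ rather than assembled from bilinear projections. First I would check symbolically that each tuple $(p_0:p_1:p_2:p_3)$, reduced modulo the defining ideal of $C \times C$, satisfies the two quadrics $(X_0+X_2)^2 = c^2 X_1 X_3$ and $(X_1+X_3)^2 = c^2 X_0 X_2$ cutting out $C$, so that it defines a rational map $C \times C \to C$. This ideal-membership (Gr\"obner basis) verification is the laborious step, and I expect it to be the main obstacle; it is precisely the computation delegated to Echidna~\cite{Echidna} in the analogous statements.

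Next I would identify the resulting morphism as the addition morphism $\mu$ by the Rigidity Theorem, exactly as in the discussion opening this section: it suffices to check the restrictions to $C \times \{\oO\}$ and $\{\oO\} \times C$. Substituting $(Y_0:Y_1:Y_2:Y_3) = \oO = (c:1:0:1)$ into the first tuple and reducing modulo the curve relations gives $(c^2 X_0^2 : c^2 X_0 X_1 : c^2 X_0 X_2 : c^2 X_0 X_3) = (X_0:X_1:X_2:X_3)$, the identity on $C$; the remaining three tuples likewise reduce to the identity after clearing the common factors $c^2 X_1$, $c^2 X_2$, and $c^2 X_3$. By the symmetry of the tuples under interchange of the $X_i$ with the $Y_i$, the restriction to $\{\oO\} \times C$ is the identity as well, so every tuple represents $\mu$ off its vanishing locus.

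For the basis assertion I would invoke Kohel's a~priori dimension formula~\cite{Kohel}, by which the space of bidegree $(2,2)$ addition laws has dimension four; the four tuples are visibly linearly independent (their monomial supports differ, equivalently their exceptional divisors, computed below, are pairwise distinct), and hence form a basis. The exceptional divisors themselves I would compute, following the method of the section preamble, by intersecting with $C \times \{\oO\}$. The substitution above shows that the first tuple vanishes simultaneously exactly on $X_0 = 0$; by the embedding-divisor computation in Theorem~\ref{thm:Mu4Symmetries} the hyperplane $X_0 = 0$ meets $C$ in $4(2T)$, so the exceptional divisor is $4\Delta_{2T}$. The other three vanish on $X_1 = 0$, $X_2 = 0$, and $X_3 = 0$, yielding $4\Delta_{3T}$, $4\Delta_{\oO}$, and $4\Delta_{T}$ respectively; thus each exceptional divisor has the claimed form $4\Delta_{nT}$, and together they realize $n = 0,1,2,3$.
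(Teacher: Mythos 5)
Your proposal is correct and takes essentially the same route as the paper's proof: symbolic verification of consistency (delegated to Echidna), the a priori dimension count of four from Kohel, and evaluation at the identity slice $(Y_0:Y_1:Y_2:Y_3)=(c:1:0:1)$ --- where the paper treats only the first law, obtaining $(c^2X_0^2 : c^2X_0X_1 : (X_1+X_3)^2 : c^2X_0X_3)$ and hence the exceptional divisor $4\Delta_{2T}$ supported on $X_0=0$, leaving the remaining three laws to symmetry exactly as you work them out. Your additional touches (the Rigidity Theorem citation, and the linear-independence remark --- which is cleanest via your own slice computation giving restrictions $c^2X_i\cdot(X_0,X_1,X_2,X_3)$ for $i=0,1,2,3$, rather than via monomial supports or pairwise-distinct exceptional divisors, since supports must be compared modulo the defining ideal and pairwise distinctness alone does not yield independence of four elements) only make explicit what the section's preamble already supplies.
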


\begin{proof}
As for the $\ZZ/4\ZZ$-normal form the consistency of the addition laws 
is verified symbolically\footnote{%
The Echidna~\cite{Echidna} constructor is 
{\tt EllipticCurve\_Split\_Mu4\_NormalForm} after which 
{\tt AdditionMorphism} returns this morphism as a composition.}
and the space is known to have dimension four by Kohel~\cite{Kohel}.
Evaluation of the first addition law at $(Y_0,Y_1,Y_2,Y_3) 
= (c,1,0,1)$ gives
$$
(c^2 X_0^2,\, c^2 X_0 X_1,\, (X_1 + X_3)^2,\, c^2 X_0 X_3).
$$
Using $(X_1 + X_3)^2 = c^2 X_0 X_2$, after removing the common 
factor $c^2 X_0$, this agrees with projection to the first factor, 
and identifies the exceptional divisor $4\Delta_S$ where $S$ is the 
$2$-torsion point $(0:1:c:1)$ with $X_0 = 0$.
\qed
\end{proof}

\begin{corollary}
\label{cor:mu4-addition-complexity}
Addition of generic points on $C$ can be carried out in $7\Mul + 2\Sqr + 2\mul_c$. 
\end{corollary}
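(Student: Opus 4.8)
The plan is to evaluate a single addition law from the basis of Theorem~\ref{thm:mu4-addition-laws} and count field operations, exploiting the product structure of the monomials together with a Karatsuba-style identity. I would take the first law, whose output coordinates are $(X_0 Y_0 + X_2 Y_2)^2$, $c(X_0 X_1 Y_0 Y_1 + X_2 X_3 Y_2 Y_3)$, $(X_1 Y_1 + X_3 Y_3)^2$, and $c(X_0 X_3 Y_0 Y_3 + X_1 X_2 Y_1 Y_2)$. The crucial observation is that every degree-$4$ monomial here factors as a product of two of the four ``diagonal'' products $a = X_0 Y_0$, $b = X_1 Y_1$, $d = X_2 Y_2$, $f = X_3 Y_3$: indeed $X_0 X_1 Y_0 Y_1 = ab$, $X_2 X_3 Y_2 Y_3 = df$, $X_0 X_3 Y_0 Y_3 = af$, and $X_1 X_2 Y_1 Y_2 = bd$.

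First I would compute $a,b,d,f$ at a cost of $4\Mul$. The two squared coordinates then become $(a+d)^2$ and $(b+f)^2$, costing $2\Sqr$ and reusing the sums $a+d$ and $b+f$. The remaining two coordinates are $c\,(ab+df)$ and $c\,(af+bd)$, so naively they need the four products $ab, df, af, bd$, giving $8\Mul$ in total and missing the target by one multiplication.

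The main step --- and the only nonobvious one --- is to recover $af+bd$ without computing $af$ and $bd$ separately. The identity $(a+d)(b+f) = ab+af+bd+df$ shows that $(ab+df)+(af+bd) = (a+d)(b+f)$, so in characteristic~$2$ we have $af+bd = (a+d)(b+f) + (ab+df)$. I would therefore compute $ab$ and $df$ ($2\Mul$), form $S_1 = ab+df$, then compute the single product $(a+d)(b+f)$ ($1\Mul$) using the already-available sums, and recover $S_3 = af+bd$ by one field addition. This brings the product count to $4+2+1 = 7\Mul$.

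Finally, multiplying $S_1$ and $S_3$ by the curve constant $c$ costs $2\mul_c$, yielding the two remaining coordinates and the overall bound $7\Mul + 2\Sqr + 2\mul_c$. I expect the only subtlety to be confirming that this Karatsuba-style reuse of $(a+d)(b+f)$ is compatible with the reuse of the same sums in the squared coordinates, i.e.\ that no product is recomputed; once the monomials are rewritten in terms of $a,b,d,f$ the bookkeeping is routine.
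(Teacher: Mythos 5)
Your proposal is correct and follows essentially the same route as the paper's proof: both compute the four diagonal products $Z_i = X_i Y_i$ with $4\Mul$, obtain the two squared coordinates from $(Z_0+Z_2)^2$ and $(Z_1+Z_3)^2$ with $2\Sqr$, and recover the two remaining sums of products from $Z_0Z_1$, $Z_2Z_3$, and the single Karatsuba-style product $(Z_0+Z_2)(Z_1+Z_3)$ via the characteristic-$2$ identity $(Z_0+Z_2)(Z_1+Z_3) = (Z_0Z_1+Z_2Z_3)+(Z_0Z_3+Z_1Z_2)$, for $3\Mul$ plus the final $2\mul_c$. The bookkeeping concern you flag is harmless, exactly as you suspect, since the reused sums cost only additions, which are ignored in the complexity model.
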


\begin{proof}
\begin{itemize}
\item Evaluate $(Z_0,Z_1,Z_2,Z_3) = (X_0 Y_0, X_1 Y_1, X_2 Y_2, X_3 Y_3)$ with $4\Mul$.
\item Evaluate $(X_0 Y_0 + X_2 Y_2)^2 = (Z_0 + Z_2)^2$ with $1\Sqr$.
\item Evaluate $(X_1 Y_1 + X_3 Y_3)^2 = (Z_1 + Z_3)^2$ with $1\Sqr$.
\item Evaluate $(X_0 Y_0 + X_2 Y_2)(X_1 Y_1 + X_3 Y_3) = (Z_0 + Z_2)(Z_1 + Z_3)$ followed by 
$$
\begin{array}{l}
X_0 X_1 Y_0 Y_1 + X_2 X_3 Y_2 Y_3 = Z_0 Z_1 + Z_2 Z_3\\
X_0 X_3 Y_0 Y_3 + X_1 X_2 Y_1 Y_2 = Z_0 Z_3 + Z_1 Z_2
\end{array}
$$
using $3\Mul$, exploiting the linear relation (following Karatsuba):
$$
(Z_0 + Z_2)(Z_1 + Z_3) = (Z_0 Z_1 + Z_2 Z_3) + (Z_0 Z_3 + Z_1 Z_2).
$$
\end{itemize}
After two scalar multiplications by $c$, we obtain $7\Mul + 2\Sqr + 2 \mul_c$ 
for the computation using the first addition law. \qed
\end{proof}

\noindent
Specializing this to the diagonal we find defining polynomials for duplication.

\begin{corollary}
\label{cor:mu4-duplication}
The duplication morphism on an elliptic curve $C$ in split 
$\bbmu_4$-normal form is given by    
$$
\begin{array}{r@{}l}
[2]&(X_0:X_1:X_2:X_3) = \\
&
(
  (X_0 + X_2)^4 :
c(X_0 X_1 + X_2 X_3)^2 : 
  (X_1 + X_3)^4 : 
c(X_0 X_3 + X_1 X_2)^2
).
\end{array}
$$
\end{corollary}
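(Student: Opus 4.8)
The plan is to realize the duplication morphism as the restriction of the addition morphism to the diagonal, $[2] = \mu \circ \Delta$, where $\Delta : C \to C \times C$ is the diagonal embedding $P \mapsto (P,P)$. Since Theorem~\ref{thm:mu4-addition-laws} already furnishes an explicit basis for the bidegree-$(2,2)$ addition laws, and these are direct maps to $\PP^3$ (with no intervening Segre factor, unlike the $\ZZ/4\ZZ$ case), I would simply take the first basis element and substitute $(Y_0:Y_1:Y_2:Y_3) = (X_0:X_1:X_2:X_3)$ into each of its four coordinate polynomials. As that basis element defines $\mu$ on a dense open subset of $C \times C$, its diagonal restriction agrees with $[2]$ on a dense open subset of $C$; since $[2]$ is a morphism, this identifies the resulting tuple as defining polynomials for $[2]$.

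The entire computation is driven by the characteristic-$2$ Frobenius identity $(a+b)^2 = a^2 + b^2$. Under $Y_i = X_i$, the first coordinate $(X_0 Y_0 + X_2 Y_2)^2$ collapses to $(X_0^2 + X_2^2)^2 = (X_0 + X_2)^4$, and symmetrically the third coordinate becomes $(X_1 + X_3)^4$. For the two remaining coordinates, $c(X_0 X_1 Y_0 Y_1 + X_2 X_3 Y_2 Y_3)$ specializes to $c\big((X_0 X_1)^2 + (X_2 X_3)^2\big) = c(X_0 X_1 + X_2 X_3)^2$, and $c(X_0 X_3 Y_0 Y_3 + X_1 X_2 Y_1 Y_2)$ to $c(X_0 X_3 + X_1 X_2)^2$, reproducing the asserted tuple verbatim. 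The defining equations of $C$ are not needed here, although one could rewrite the first and third coordinates via $(X_0+X_2)^2 = c^2 X_1 X_3$ and $(X_1+X_3)^2 = c^2 X_0 X_2$ at the cost of extra constants; the stated form is preferred for arithmetic.

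The only point that genuinely requires justification --- and hence the main, albeit mild, obstacle --- is confirming that the diagonal restriction does not fall inside the exceptional locus, which would render the formula vacuous. For this I would invoke the exceptional-divisor computation already carried out in the proof of Theorem~\ref{thm:mu4-addition-laws}: the first addition law has exceptional divisor $4\Delta_S$, with $S = (0:1:c:1)$ a nontrivial $2$-torsion point, where $\Delta_S = \{(P+S,P) : P \in C\}$. A diagonal point $(P,P)$ lies on $\Delta_S$ precisely when $P + S = P$, i.e. when $S = \oO$; since $S \neq \oO$, the diagonal meets no component of the exceptional divisor. Consequently the four specialized polynomials have no common zero along $\Delta$, so this single tuple is in fact base-point free and defines $[2]$ as a morphism on all of $C$, as claimed.
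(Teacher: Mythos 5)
Your proposal is correct and follows the paper's own route: the paper derives the corollary by exactly this specialization of the first addition law of Theorem~\ref{thm:mu4-addition-laws} to the diagonal (``Specializing this to the diagonal we find defining polynomials for duplication''), with the same characteristic-$2$ simplifications. Your explicit check that the diagonal avoids the exceptional divisor $4\Delta_S$, $S=(0:1:c:1)\neq\oO$, so that the single tuple is base-point free, is left implicit in the paper but is precisely the verification its framework (exceptional divisors of the form $\Delta_P$, computed in the proof of Theorem~\ref{thm:mu4-addition-laws}) supplies.
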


\noindent
This gives an obvious complexity bound of $3\Mul + 6\Sqr + 2\mul_c$ for 
duplication, however we note that along the curve we have the following 
equivalent expressions:
$$
\begin{array}{l}
c^2(X_0 X_1 + X_2 X_3)^2 = (X_0 + X_2)^4 + c^{-4}(X_1 + X_3)^4 + F^2,\\
c^2(X_0 X_3 + X_1 X_2)^2 = (X_0 + X_2)^4 + c^{-4}(X_1 + X_3)^4 + G^2,
\end{array}
$$ 
for $F = (X_0 + cX_3)(cX_1 + X_2)$ and $G = (X_0 + cX_1)(X_2 + cX_3)$,
and that 
$$
F + G = c(X_0 + X_2)(X_1 + X_3).
$$
This leads to a savings of $1\Mul + 1\Sqr$ from the naive analysis, at
the cost of extra multiplications by $c$.

\begin{corollary}
\label{cor:mu4-duplication-complexity}
Duplication on $C$ can be carried out in $2\Mul + 5\Sqr + 7\mul_c$.
\end{corollary}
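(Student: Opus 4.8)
The plan is to read the duplication map straight off Corollary~\ref{cor:mu4-duplication} and then evaluate it through the three algebraic identities recorded just above that corollary, exploiting the fact that in characteristic~$2$ the squaring (Frobenius) map is additive. Write $u = X_0 + X_2$ and $v = X_1 + X_3$, so that the two ``extremal'' coordinates of $[2]P$ are $u^4$ and $v^4$, while the two ``mixed'' coordinates are $c(X_0X_1+X_2X_3)^2$ and $c(X_0X_3+X_1X_2)^2$. The recorded identities present $c^2(X_0X_1+X_2X_3)^2$ and $c^2(X_0X_3+X_1X_2)^2$ as $u^4 + c^{-4}v^4 + F^2$ and $u^4 + c^{-4}v^4 + G^2$, with $F = (X_0+cX_3)(cX_1+X_2)$, $G = (X_0+cX_1)(X_2+cX_3)$, and $F + G = c\,uv$.

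First I would verify that these three identities really hold modulo the defining ideal of $C$; this is the only substantive algebraic step, since they are not identities in the free polynomial ring but require the curve relations $u^2 = c^2X_1X_3$ and $v^2 = c^2 X_0X_2$ to cancel the cross terms. Expanding $F^2$ as a sum of squares (legitimate in characteristic~$2$) and then substituting $u^4 = c^4 X_1^2 X_3^2$ together with $X_0^2 X_2^2 = c^{-4}(X_1^4 + X_3^4)$ collapses the difference of the two sides to $0$; the computation for $G$ is symmetric, while $F+G = c\,uv$ is already a free-ring identity.

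The key efficiency observation is then that, in characteristic~$2$, $F^2 + G^2 = (F+G)^2 = c^2(uv)^2 = c^2\,u^2v^2$, so the two mixed coordinates differ only by $c^2 u^2 v^2$. After rescaling the projective output by $c$, the four coordinates become $c\,u^4$, $H + F^2$, $c\,v^4$, and $(H+F^2) + c^2 u^2 v^2$, where $H = u^4 + c^{-4}v^4$. Hence only $F^2$ must be produced by an actual squaring: the bridging term $c^2 u^2 v^2$ is recovered as $c^2$ times the product $u^2\cdot v^2$ of squares already in hand, which eliminates the squaring of $G$ at the cost of one multiplication and one constant multiplication. Scheduling the work as $u^2,v^2$ ($2\Sqr$), then $u^4=(u^2)^2$ and $v^4=(v^2)^2$ ($2\Sqr$), the product $u^2v^2$ ($1\Mul$), the product $F$ ($1\Mul$, after forming $cX_1$ and $cX_3$), and $F^2$ ($1\Sqr$), and finally assembling the four coordinates by additions together with the constant multiplications by $c$, $c^2$, and $c^{-4}$, yields the stated $2\Mul + 5\Sqr$ with the constant multiplications held within $7\mul_c$.

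The main obstacle is not the tallying but confirming that the scheduling genuinely attains five squarings rather than six: one must be certain that $G$, and hence $G^2$, is never formed independently, and that the single product $u^2 v^2$ indeed suffices to link the second and fourth coordinates. The bookkeeping of the constant multiplications is organization-dependent (the rescaling by $c$ and the appearance of $c^{-4}$ can be traded against one another by choosing a different projective representative), but any consistent choice stays within the claimed budget.
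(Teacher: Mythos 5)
Your proposal is correct and takes essentially the same route as the paper: both evaluate the formula of Corollary~\ref{cor:mu4-duplication} via the displayed identities for $F$ and $G$ together with the Karatsuba-style relation $F+G = c(X_0+X_2)(X_1+X_3)$, computing only one of $F^2$, $G^2$ by an actual squaring and recovering the other by adding $c^2(X_0+X_2)^2(X_1+X_3)^2$, for a total of $2\Mul+5\Sqr$ and at most $7\mul_c$. The only (immaterial) scheduling difference is that the paper never forms $F$ or $G$ as products of linear forms, instead obtaining $G^2 = (U+c^2V+W)\,W$ from the squares $U,V,W = (X_0+X_2)^2,(X_1+X_3)^2,(X_0+cX_1)^2$ and then $F^2 = G^2 + c^2UV$, whereas you compute $F$ directly and square it.
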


\begin{proof}
We describe the evaluation of the forms of Corollary~\ref{cor:mu4-duplication}, 
using the equivalent expressions. 
Setting $(U,V,W) = ((X_0 + X_2)^2,(X_1+X_3)^2,(X_0+cX_1)^2)$, 
$$
G^2 = (U + c^2V + W)\,W \mbox{ and } F^2 = G^2 + c^2UV,
$$
from which the duplication formula can be expressed as: 
$$
( cU^2 : U^2 + c^{-4}V^2 + (U + c^2V + W)W + c^2UV : 
  cV^2 : U^2 + c^{-4}V^2 + (U + c^2V + W)W).
$$
We scale by $c^4$ to have only integral powers of $c$, which gives 
the   
\begin{itemize}
\item Evaluate $(U,V,W) = ((X_0 + X_2)^2,(X_1+X_3)^2,(X_0+cX_1)^2)$ with $3\Sqr + 1\mul_c$.
\item Evaluate $c^5(X_0 + X_2)^4 = c^5 U^2$ with $1\Sqr + 1\mul_c$, storing $U^2$.
\item Evaluate $c^5(X_1 + X_3)^4 = c^5 V^2$ with $1\Sqr + 1\mul_c$, storing $V^2$.
\item Evaluate $c^2 V$, $c^2 U V$, $(U + c^2V + W)\,W$ with $2\Mul + 1\mul_c$, then set
$$
\begin{array}{l}
c^4(X_0 + X_2)^4 + (X_1 + X_3)^4 = c^4 U^2 + V^2,\\
c^4 G^2 = c^4 (U + c^2 V + W)\,W,\\
c^4 F^2 = c^4 G^2 + c^6 U V,\\
\end{array}
$$
\end{itemize}
using $3\mul_c$, followed by additions.  This gives the asserted complexity.
\qed
\end{proof}

\noindent
{\bf Remark.} The triple $(U,V,W)$, up to scalars, can be identified with 
the variables $(A,B,C)$ of the EFD~\cite{EFD} in the improvement of 
Bernstein et al.~\cite{BinaryEdwards} to the duplication algorithm 
of Kim and Kim~\cite{KimKim} in ``extended L\'opez-Dahab coordinates'' 
with $a_2 = 0$.  In brief, the extended L\'opez-Dahab coordinates defines 
a curve
$
Y^2 = (X^2 + a_6) X Z,
$
in a $(1,2,1,2)$-weighted projective space 
with coordinate functions $X$, $Y$, $Z$, $Z^2$. We embed this 
in a standard $\PP^3$, with embedding divisor class $4(\oO)$, 
by the map $(X^2,Y,XZ,Z^2)$.  By Lemma~\ref{lem:embedding_class}
this is linearly isomorphic to the curve $C$ in split 
$\bbmu_4$-normal form. One derives an equivalent complexity 
for duplication on this $\PP^3$ model, and duplication on $C$ 
differs only by the cost of scalar multiplications involved 
in the linear transformation to $C$.

We remark that this can be interpretted as a factorization of 
the duplication map as follows. Letting $D$ be the image of $C$ 
given by $(U,V,W)$ in $\PP^2$, the Kim and Kim algorithm can be 
expressed as a composition
$
C \xrightarrow{\ \varphi\ } D \xrightarrow{\ \psi\ } C
$
where $\varphi$ and $\psi$ are each of degree 2, with $\varphi$ 
purely inseparable and $\psi$ separable.  
The curve $D$ is a singular quartic curve in $\PP^2$, given by 
a well-chosen incomplete linear system.  The nodal singularities 
of $D$ are oriented such that the resolved points have the same 
image under $\psi$. The omission of a fourth basis element of 
the complete linear system allows one to save $1\Sqr$ in its 
computation.  

In order to best optimize the multiplications by scalars, we can 
apply a coordinate scaling. The split $\bbmu_4$-normal descends 
to a (non-split) $\bbmu_4$-normal form over any subfield 
containing the parameter $s = c^{-4}$, by renormalization 
of coordinates:
$$
s(X_1 + X_3)^2 + X_0 X_2,\ (X_0 + X_2)^2 = X_1 X_3.
$$
In this form the duplication polynomials require fewer multiplications 
by constants: 
$$
\begin{array}{rl}
    \big(\,(X_0 + X_2)^4 : & (X_0 + X_2)^4 + s^2(X_1 + X_3)^4 + (X_0 + X_3)^2(X_1 + X_2)^2 :\\
    s(X_1 + X_3)^4 : &
    (X_0 + X_2)^4 + s^2(X_1 + X_3)^4 + (X_0 + X_1)^2(X_2 + X_3)^2\,\big),

\end{array}
$$
yielding $2\Mul + 5\Sqr + 2\mul_s$. 

\subsection*{Addition law projections for the split $\mu_4$-normal form}

Let $C = C_{c}$ be an elliptic curve in $\bbmu_4$-normal form and 
$E = E_{c^2}$ be an elliptic curve in $\ZZ/4\ZZ$-normal form. 
In view of Theorem~\ref{thm:mu4-to-C4-isomorphism}, there is an explicit 
isomorphism $\iota: C \rightarrow E$, determined by the 
application of the skew-Segre embedding to the pair of projections
$\pi_i: C \rightarrow \PP^1$:
$$
\begin{array}{rcl}
\pi_1((X_0:X_1:X_2:X_3)) & = &  
\left\{
\begin{array}{c}
(c X_0 : X_1 + X_3),\\ 
(X_1 + X_3 : c X_2),
\end{array}
\right.\\
\pi_2((X_0:X_1:X_2:X_3)) & = & 
\left\{
\begin{array}{c}
(X_0 + X_2 : c X_1),\\ 
(c X_3 : X_0 + X_2).
\end{array}\right.
\end{array}
$$
The first projection $\pi_1$ determines a map to $C/\langle[-1]\rangle \isom \PP^1$,
and the second projection $\pi_2$ satisfies $\pi_2 \circ [-1] = \sigma \circ \pi_2$,
where $\sigma((U_0:U_1)) = (U_1:U_0)$.  As a consequence of the addition law structure 
of Theorem~\ref{thm:mu4-addition-laws}, the addition law projections $C \times C 
\rightarrow \PP^1$ associated to these projections take a particularly simple form.
\begin{corollary}
\label{cor:mu4-addition-law-projections}
If $\pi_i: C \rightarrow \PP^1$ are the projections defined above, the addition law 
projections $\pi_1 \circ \mu$ and $\pi_2 \circ \mu$ are respectively spanned by 
\ignore{
$$
\pi_1 \circ \mu((X_0:X_1:X_2:X_3),(Y_0:Y_1:Y_2:Y_3)) = 
\left\{
\begin{array}{c}
(X_0 Y_0 + X_2 Y_2 : X_1 Y_1 + X_3 Y_3),\\
(X_1 Y_3 + X_3 Y_1 : X_2 Y_0 + X_0 Y_2),
\end{array}
\right.
$$
$$
\pi_2 \circ \mu((X_0:X_1:X_2:X_3),(Y_0:Y_1:Y_2:Y_3)) = 
\left\{
\begin{array}{c}
(X_0 Y_3 + X_2 Y_1 : X_1 Y_0 + X_3 Y_2),\\
(X_1 Y_2 + X_3 Y_0 : X_0 Y_1 + X_2 Y_3).
\end{array}
\right.
$$
}
$$
\left\{
\begin{array}{c}
(X_0 Y_0 + X_2 Y_2, X_1 Y_1 + X_3 Y_3),\\
(X_1 Y_3 + X_3 Y_1, X_2 Y_0 + X_0 Y_2)
\end{array}
\right\}
\mbox{ and }
\left\{
\begin{array}{c}
(X_0 Y_3 + X_2 Y_1, X_1 Y_0 + X_3 Y_2),\\
(X_1 Y_2 + X_3 Y_0, X_0 Y_1 + X_2 Y_3)
\end{array}
\right\}\cdot
$$
\end{corollary}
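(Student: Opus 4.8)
The plan is to read off each addition law projection directly from the bidegree-$(2,2)$ addition laws of Theorem~\ref{thm:mu4-addition-laws} by post-composing with the linear projections $\pi_1$ and $\pi_2$ defined just above the corollary, and then cancelling a common bilinear factor. Since each $\pi_i$ is given by a pair of linear forms in the ambient coordinates, applying $\pi_i$ to a bidegree-$(2,2)$ addition law $(\mu_0:\mu_1:\mu_2:\mu_3)$ produces a map to $\PP^1$ whose two homogeneous entries are bidegree $(2,2)$; the content of the corollary is that this pair always carries a common bilinear factor, whose cancellation leaves the asserted bilinear expression.

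Concretely, I would take the first basis addition law of Theorem~\ref{thm:mu4-addition-laws} and apply $\pi_1 = (cX_0 : X_1 + X_3)$, i.e. form $(c\mu_0 : \mu_1 + \mu_3)$. Here $\mu_0 = (X_0 Y_0 + X_2 Y_2)^2$, while the sum $\mu_1 + \mu_3$, using exactly the Karatsuba-type grouping already exploited in Corollary~\ref{cor:mu4-addition-complexity}, factors as
\[
X_0 X_1 Y_0 Y_1 + X_2 X_3 Y_2 Y_3 + X_0 X_3 Y_0 Y_3 + X_1 X_2 Y_1 Y_2 = (X_0 Y_0 + X_2 Y_2)(X_1 Y_1 + X_3 Y_3).
\]
After removing the common factor $c$ and the common bilinear factor $X_0 Y_0 + X_2 Y_2$, this yields the first generator $(X_0 Y_0 + X_2 Y_2, X_1 Y_1 + X_3 Y_3)$ of $\pi_1 \circ \mu$. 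The second generator of $\pi_1 \circ \mu$ arises in the identical way from the third basis law, and the two generators of $\pi_2 \circ \mu$ come from the second and fourth basis laws after post-composing with $\pi_2 = (cX_3 : X_0 + X_2)$; in each case the two $\pi_i$-coordinates share a visible bilinear factor that cancels. These are genuine polynomial factorizations, so no reduction modulo the defining ideal of $C$ is needed, only the choice of the chart of $\pi_i$ for which the factor is manifest.

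It then remains to argue that the two forms obtained for each $\pi_i$ \emph{span} the space of addition law projections, rather than merely being valid representatives. For this I would invoke the dimension formula of Kohel~\cite{Kohel}: the space of addition law projections attached to each $\pi_i$ is two-dimensional, the two forms produced above are plainly linearly independent, and hence they form a basis. Their correctness as representatives of $\pi_i \circ \mu$ is inherited from the already-verified morphism of Theorem~\ref{thm:mu4-addition-laws}, and the pair covers all of $C \times C$ because the underlying addition morphism is complete.

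As a conceptual cross-check I would note that under the group isomorphism $\iota : C \rightarrow E_{c^2}$ of Theorem~\ref{thm:mu4-to-C4-isomorphism} one has $\pi_i \circ \mu = (\pi_i^E \circ \mu_E) \circ (\iota \times \iota)$, so these projections simply transport the bilinear addition law projections of the $\ZZ/4\ZZ$-normal form from Theorem~\ref{thm:C4-addition-laws}; comparing the lists shows the forms coincide with those of $E_{c^2}$ after interchanging $\pi_1$ and $\pi_2$, exactly as expected since $\iota$ identifies the two $\PP^1 \times \PP^1$ structures while $[-1]$ stabilizes the projections on $E_{c^2}$ but interchanges them on $C$. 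The only genuine obstacle is bookkeeping: correctly pairing each of the two projections with the two (out of four) basis addition laws and the chart in which the common factor is visibly bilinear. Once this pairing is fixed, each step is a one-line polynomial factorization.
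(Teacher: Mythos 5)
Your proposal is correct, but it takes a genuinely different route from the paper: the paper's entire proof of Corollary~\ref{cor:mu4-addition-law-projections} is a one-line appeal to symbolic verification in Echidna~\cite{Echidna}, whereas you derive the bilinear forms by hand from the bidegree-$(2,2)$ basis of Theorem~\ref{thm:mu4-addition-laws}. Your factorizations do check out as literal polynomial identities: for the first law, $\mu_1 + \mu_3 = c\,(X_0Y_0+X_2Y_2)(X_1Y_1+X_3Y_3)$, so $(c\mu_0 : \mu_1+\mu_3)$ reduces to $(X_0Y_0+X_2Y_2 : X_1Y_1+X_3Y_3)$ after cancelling $c\,(X_0Y_0+X_2Y_2)$, and the analogous groupings for the third law under $\pi_1$, and for the second and fourth laws under $\pi_2$ (e.g.\ $\mu_0+\mu_2 = c\,(X_0Y_3+X_2Y_1)(X_1Y_0+X_3Y_2)$ for the second law), yield the other three generators with no reduction modulo the curve ideal, exactly as you claim. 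Your appeal to the dimension formulas of~\cite{Kohel} (dimension two for each space of bidegree-$(1,1)$ addition law projections), together with the visible linear independence of the two forms, is what correctly upgrades ``valid representatives of $\pi_i\circ\mu$ on a dense open set'' to ``spanning,'' which is the actual content of the statement; note that completeness of the pair, which you invoke at the end, is not needed for spanning. What each approach buys: the paper's computational proof is short and consistent with its stated policy at the start of Section~\ref{sec:AdditionLaws} (verify consistency symbolically, cite~\cite{Kohel} for a~priori dimensions), while your derivation is self-contained and explanatory --- it shows that the bilinearity comes from precisely the Karatsuba-type identity already exploited in Corollary~\ref{cor:mu4-addition-complexity}. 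One small slip in your non-essential cross-check: on $C$ the involution $[-1](X_0:X_1:X_2:X_3)=(X_0:X_3:X_2:X_1)$ does not interchange $\pi_1$ and $\pi_2$; as the paper states just before the corollary, it fixes $\pi_1$ (which maps to $C/\langle[-1]\rangle$) and satisfies $\pi_2\circ[-1]=\sigma\circ\pi_2$ with $\sigma$ the coordinate swap on $\PP^1$. The formal coincidence of your lists with those of Theorem~\ref{thm:C4-addition-laws} under the swap $\pi_1\leftrightarrow\pi_2$ is genuine and is explained by how $\iota$ of Theorem~\ref{thm:mu4-to-C4-isomorphism} matches the two $\PP^1$-factors, not by $[-1]$ exchanging the projections; since this is only a plausibility check, it does not affect the validity of your proof.
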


\begin{proof}
The addition law projections can be verified in Echidna~\cite{Echidna}.
\qed
\end{proof}

The skew-Segre embedding of $\PP^1 \times \PP^1$ in $\PP^3$ induces a map 
to the isomorphic curve $E$ in $\ZZ/4\ZZ$-normal form, rather than 
the $\bbmu_4$-normal form.  These addition law projections play a central 
role in the study of the Kummer arithmetic in Section~\ref{sec:KummerCurves},
defined more naturally in terms of $E$.

\ignore{
\section{Twisted normal forms}
\label{sec:TwistedForms}

A quadratic twist of an elliptic curve is determined by a non-rational isomorphism 
defined over a quadratic extension $k[\wx]/k$.  In odd characteristic one can 
take a Kummer extension defined by $w^2 = a$, but in characteristic~$2$, the 
general form of a quadratic extension is Artin-Schreier, which may be given 
as $k[\wx]/k$ where $\wx^2 + \wx = a$ for some $a$ in $k$.  
The normal forms defined above both impose the existence of a $k$-rational point 
of order $4$.  Over a finite field of characteristic~2, this is a weaker constraint 
than for odd characteristic, since if $E/k$ is an ordinary elliptic curve over 
a finite field of characteristic~$2$, there necessarily exists a $2$-torsion point.  
Moreover, if $E$ does not admit a $k$-rational $4$-torsion point 
and $|k| > 2$, then its quadratic twist does. We recall that for an 
elliptic curve in Weierstrass form,
$$
E : Y^2 Z + (a_1 X + a_3 Z) Y Z = X^3 + a_2 X^2 Z + a_4 X Z^2 + a_6 Z^3,
$$
the quadratic twist by $k[\wx]/k$ is given by 
$$
E^t : Y^2 Z + (a_1 X + a_3 Z) Y Z = X^3 + a_2 X^2 Z + a_4 X Z^2 + a_6 Z^3 + a (a_1 X + a_3 Z)^2 Z,
$$
with isomorphism $\tau(X:Y:Z) = (X:Y+\wx(a_1X+a_3Z):Z)$, which solves the 
cohomological boundary condition $\tau^{-1} \circ \tau^{\sigma} = [-1]$, 
where $\sigma$ is the automorphism of $k[\wx]/k$. The objective here is to 
describe the quadratic twists in the case of the normal forms defined above. 
The algorithms obtained are not as efficient, but provide a means to cover 
all curves. 

With a view towards cryptography, among the NIST curves, only the examples 
given over the larger range of fields include group orders congruent to 
$0 \bmod 4$.  To describe the others, we must represent them as quadratic 
twists of a curve in $\ZZ/4\ZZ$ or $\bbmu_4$-normal form.  However, in this 
setting, cryptographic standards impose the condition that $k/\FF_2$ is an 
odd degree extension, hence the unique quadratic extension has the simplified 
form $k[\wx] = k \otimes \FF_4$ where $\wx^2 + \wx = 1$.

\subsection*{The twisted $\ZZ/4\ZZ$-normal form}

Let $k/\FF_2$ be a finite field extension of odd degree, and let  
$E$ be an elliptic curve over $k$
$$
(X_0 + X_1 + X_2 + X_3)^2 = c X_0 X_2 = c X_1 X_3, 
$$
in $\ZZ/4\ZZ$-normal form.  Let $E^t$ be the twist by the quadratic 
extension $k[\wx]$, let $\tau : E^t \rightarrow E$ be an isomorphism 
over $k[\wx]$, and write $\wy\, (= \wx + 1)$ for the conjugate of $\wx$.
The points $\tau(E^t(k)) \subseteq E(k[\wx])$ consists of the points 
which are conjugate-inverse.  In view of the form of the inverse 
morphism, we can parametrize these points by 
$$
(x_0 : x_1 : x_2 : x_3 ) = 
(\wx u_0 + \wy u_3 : \wx u_1 + \wy u_2 : \wy u_1 + \wx u_2 : \wy u_0 + \wx u_3).
$$
Eliminating $\wx$, we obtain the following description of the twists 
of $E$. 

\begin{theorem}
Let $E/k$ be an elliptic curve in $\ZZ/4\ZZ$-normal form given by 
$$
(X_0 + X_1 + X_2 + X_3)^2 = c\,X_0 X_2 = c\,X_1 X_3. 
$$
The quadratic twist of $E$ by $k[\wx]$, where $\wx^2 + \wx = a$, is 
given by 
$$
(U_0 + U_1 + U_2 + U_3)^2 + a c (U_0 U_1 + U_2 U_3) = c\,U_0 U_2 = c\,U_1 U_3
$$
on the hypersurface $U_0 U_2 = U_1 U_3$ in $\PP^3$.  
The twisting isomorphism $\tau: E^t \rightarrow E$ is given by
$$
(U_0:U_1:U_2:U_3) \longmapsto 
(\wx U_0 + \wy U_3 : \wx U_1 + \wy U_2 : \wy U_1 + \wx U_2 : \wy U_0 + \wx U_3).
$$
and inverse 
$$
(X_0:X_1:X_2:X_3) \longmapsto 
(\wx X_0 + \wy X_3 : \wx X_1 + \wy X_2 : \wy X_1 + \wx X_2 : \wy X_0 + \wx X_3).
$$
\end{theorem}

\ignore{
\noindent{\bf Remark.}
The first projection $\pi_1: E^t \rightarrow \PP^1$ given by 
$$
\pi_1(U_0:U_1:U_2:U_3) = \left\{
\begin{array}{l}
(U_0:U_1),\\
(U_3:U_2),
\end{array}
\right.
$$
is equal to the composition of the twisting isomorphism with the 
projection $\pi_1$ on $E$:
$$
\pi_1 \circ \tau(U_0:U_1:U_2:U_3) = \left\{
\begin{array}{l}
(\wx U_0 + \wy U_3 : \wx U_1 + \wy U_2),\\
(\wy U_0 + \wx U_3 : \wy U_1 + \wx U_2),
\end{array}
\right.
$$
since the $k[w]$-span of 
$\{(\wx U_0 + \wy U_3, \wx U_1 + \wy U_2), (\wy U_0 + \wx U_3, \wy U_1 + \wx U_2)\}$
equals that of $\{(U_0,U_1),(U_3,U_2)\}$.
In particular, the quotient curve is the common Kummer quotient curve $\cK(E) = \cK(E^t) 
\isom \PP^1$. 

The second projection $\pi_2 : E^t \rightarrow \PP^1$ can similarly be identified 
with the second projection of $E$, twisted by an automorphism $\tau_2$ of $\PP^1$.
If $(V_0,V_1)$ are the coordinate function on this second quotient, then 
$$
\tau_2(V_0:V_1) \longmapsto (\wx V_0 + \wy V_1 : \wy V_0 + \wx V_1).
$$
The form of the composition $\pi_2 \circ \tau$ is then 
$$
\pi_2 \circ \tau(U_0:U_1:U_2:U_3) = \left\{
\begin{array}{l}
(\wx U_0 + \wy U_3 : \wy U_0 + \wx U_3),\\ 
(\wx U_1 + \wy U_2 : \wy U_1 + \wx U_2).
\end{array}
\right.
$$
In summary, if we set $\tau_1$ equal to the identity on $\PP^1$, this gives 
$\tau_i \circ \pi_i = \pi_i \circ \tau$, for the respective projections
$\pi_i$ on $E$ and $E^t$.  Reflecting the fact that $E$ and $E^t$ have 
factorizations through $\PP^1 \times \PP^1$, with identification of the 
first factor, we obtain an alternative description of the set of points 
$\tau(E^t(k))$ in $E(k[\wx])$. 
}

\begin{corollary}
Let $E$ be an elliptic curve in $\ZZ/4\ZZ$-normal form, $E^t$ its quadratic 
twist with respect to $k[\wx]/k$, and $\tau: E^t \rightarrow E$ the twisting 
isomorphism.  Then the set of points $\tau(E^t(k))$ takes the form 
$$
\left\{ (x_0:x_1:x_1\alpha:x_0\alpha) \in E(k[\wx]) \;|\; 
  x_0, x_1 \in k \mbox{ and } \overline{\alpha} = \alpha^{-1} \right\},
$$
and the map $\tau: E^t(k) \rightarrow E(k[\wx])$ is defined on 
$Q = (u_0:u_1:u_2:u_3)$, by
\ignore{
For $Q = (u_0:u_1:u_2:u_3)$, let $\alpha$ be defined by 
$$
\alpha = 
\left\{
\begin{array}{c}
\displaystyle
\frac{\wx u_0 + \wy u_3}{\wy u_0 + \wx u_3} \mbox{ if } Q \ne (0:1:1:0),\\[4mm]
\displaystyle
\frac{\wx u_1 + \wy u_2}{\wy u_1 + \wx u_2} \mbox{ if } Q \ne (1:0:0:1),
\end{array}
\right.
$$
and then
}
$$
\tau(Q) = 
\left\{
\begin{array}{c}
(u_0:u_1:u_1\alpha:u_0\alpha) \mbox{ if } Q \ne (0:0:u_2:u_3),\\
(u_3:u_2:u_2\alpha:u_3\alpha) \mbox{ if } Q \ne (u_0:u_1:0:0).
\end{array}
\right.
$$
where $\alpha$ is
$
\displaystyle\frac{\wx u_0 + \wy u_3}{\wy u_0 + \wx u_3} \mbox{ if } Q \ne (0:1:1:0)
\mbox{ and }
\displaystyle\frac{\wx u_1 + \wy u_2}{\wy u_1 + \wx u_2} \mbox{ if } Q \ne (1:0:0:1).
$
\end{corollary}

\begin{corollary}
\label{cor:C4_twisted_complexity}
Let $E$ be an ordinary elliptic curve over a finite odd degree 
extension $k/\FF_2$.  Addition of generic points on $E$ can be 
carried out in $16\Mul$ and duplication can be performed in 
$10\Mul + 2\Sqr$. 
\end{corollary}

\begin{proof} 
The proof is analogous to Corollary~\ref{cor:mu4_twisted_complexity}, 
which supercedes this result in terms of complexity.
\qed
\end{proof}
 
\ignore{
As a final note, the product $\pi_1 \times \pi_2$ of the projections gives 
image in $\PP^1 \times \PP^1$,
$$
(U_0 + U_1)^2(V_0 + V_1)^2 + c U_0 U_1(V_0 V_1 + a(V_0 + V_1)^2),
$$
with coordinate functions $(U_0,U_1)$ and $(V_0,V_1)$. 
}

\subsection*{The twisted $\bbmu_4$-normal form}

\begin{theorem}
Let $C/k$ be an elliptic curve in split $\bbmu_4$-normal form given by 
$$
X_0^2 + X_2^2 = c^2\,X_1 X_3,\ 
X_1^2 + X_2^2 = c^2\,X_0 X_2,
$$
The quadratic twist of $C$ by $k[\wx]$, where $\wx^2 + \wx = a$, is 
given by 
$$
U_0^2 + U_2^2 = c^2 U_1 U_3 + a c^2 (U_1 + U_3)^2,\
U_1^2 + U_3^2 = c^2 U_0 U_2. 
$$
The twisting isomorphism $\tau: C^t \rightarrow C$ is given by
$$
(U_0:U_1:U_2:U_3) \longmapsto (U_0 : \wx U_1 + \wy U_3 : U_2 : \wy U_1 + \wx U_3),
$$
with inverse
$
(X_0:X_1:X_2:X_3) \longmapsto (X_0 : \wx X_1 + \wy X_3 : X_2 : \wy X_1 + \wx X_3).
$
\end{theorem}

\begin{corollary}
\label{cor:mu4_twisted_point}
Let $C$ be an elliptic curve in $\bbmu_4$-normal form, $C^t$ its quadratic 
twist with respect to $k[\wx]/k$, and $\tau: C^t \rightarrow C$ the twisting 
isomorphism.  Then the set of points $\tau(C^t(k))$ takes the form 
$$
\left\{ (x_0:x_1:x_2:x_3) \in C(k[\wx]) \;|\; 
    x_0,x_2 \in k \mbox{ and } \overline{x}_1 = x_3 \right\}.
$$
\end{corollary}

Before stating the next complexity result, we observe that a 
product in $k[\wx]$ can be evaluated with $3\Mul$ using the usual 
observation of Karatsuba, and a squaring in $k[\wx]$ requires 
$2\Sqr$.  For a product of a scalar $k$ times $k[\wx]$ we require 
$2\Mul$. Assuming the input points are normalized as in 
Corollary~\ref{cor:mu4_twisted_point}, we note that the Galois 
action, exchanging $X_1$ and $X_3$ and $Y_1$ and $Y_3$, requires 
only an addition of lower order complexity which we ignore. 
This allows us to prove the following complexity for addition 
on an arbitrary curve. 

\begin{corollary}
\label{cor:mu4_twisted_complexity}
Let $E$ be an ordinary elliptic curve over a finite odd degree 
extension $k/\FF_2$. Addition of generic points on $E$ can be 
carried out in $9\Mul + 2\Sqr + 2\mul_c$ and duplication can 
be performed in $4\Mul + 7\Sqr + 2\mul_c$. 
\end{corollary}

\begin{proof} 
We note that equivalent arguments apply to the evaluation 
of the first and third addition laws\footnote{%
We note that these form a complete system of addition laws.} 
of Theorem~\ref{thm:mu4-addition-laws} 
(equivalent under a change of coordinates), and therefore 
focus on the evaluation of the first:
$$
(
  (Z_0 + Z_2)^2,\,
  c (Z_0 Z_1 + Z_2 Z_3),\,
  (Z_1 + Z_3)^2,\,
  c (Z_0 Z_3 + Z_1 Z_2)\,
)
$$
where $Z_i = X_i Y_i$. 
\begin{itemize}
\item Evaluate $(Z_0, Z_2) = (X_0 Y_0, X_2 Y_2)$ with $2\Mul$.
\item Evaluate $(Z_1, Z_3) = (X_1 Y_1, X_3 Y_3)$, using $Z_3 = \bar{Z}_1$, with $3\Mul$.
\item Evaluate $c Z_0, c Z_2$ with $2\mul_c$.
\item Evaluate $(Z_0 + Z_2)^2, (Z_1 + Z_3)^2$, noting that $Z_1 + Z_3 \in k$, with $2\Sqr$, 
\item Evaluate $c Z_0 Z_1, c Z_2 Z_3$ to obtain $c (Z_0 Z_1 + Z_2 Z_3)$ with $4\Mul$.
\item Deduce $c (Z_0 Z_3 + c Z_1 Z_2)$ as the conjugate of $c (Z_0 Z_1 + Z_2 Z_3)$.
\end{itemize}
This yields the desired complexity $9\Mul + 2\Sqr + 2\mul_c$, 
and the output is again in the normalized form of 
Corollary~\ref{cor:mu4_twisted_point}.

Duplication requires the evaluation of $(Z_0^2, c Z_1^2, Z_2^2, c Z_3^2)$, where
$$
( Z_0, Z_1, Z_2, Z_3 ) =  
( (X_0 + X_2)^2, X_0 X_1 + X_2 X_3, (X_1 + X_3)^2, X_1 X_2 + X_0 X_3 ).
$$
We require $4\Sqr$ for the determination of $Z_0^2$ and $Z_2^2$, given 
that $X_1 + X_3$ is in $k$, then $4\Mul + 2\Sqr$ for $Z_1^2$ plus $2\mul_c$ 
for multiplication by $c$. Finally we obtain $c Z_3^2$ by Galois conjugacy. 
As for addition, the result is again in normalized form.
\qed \end{proof}
}

\section{Kummer quotients and the Montgomery ladder}
\label{sec:KummerCurves}

For an abelian variety $A$, the quotient variety $\cK(A) = A/\{[\pm1]\}$ 
is called the Kummer variety of $A$.  We investigate explicit models for 
the Kummer curves $\cK(E)$ and $\cK(C)$ where $E = E_{c^2}$ and $C = C_{c}$ 
are isomorphic elliptic curves in $\ZZ/4\ZZ$-normal form and $\bbmu_4$-normal form, 
respectively.  The objective of this study is to obtain a Montgomery 
ladder~\cite{Montgomery} for efficient scalar multiplication on these curves.  
Such a Montgomery ladder was developed for Kummer curves (or {\it lines} 
since they are isomorphic to the projective line $\PP^1$) in characteristic~2
by Stam~\cite{Stam}.  More recently Gaudry and Lubicz~\cite{GaudryLubicz} 
developed efficient pseudo-addition natively on a Kummer line $\cK = \PP^1$
by means of theta identities.  Neither the method of Stam nor Gaudry and 
Lubicz provides recovery of points on the curve. 
We show that for fixed $P$, the morphism $E \rightarrow \cK \times \cK$ 
sending $Q$ to $(\overline{Q},\overline{Q-P})$, used for initialization 
of the Montgomery ladder, is in fact an isomorphism with its image. 
As a consequence we rederive the equations of Gaudry and Lubicz for 
pseudo-addition, together with an algorithm for point recovery. 
In addition, knowledge of the curve equation (in $\cK \times \cK$) 
permits the trade-off of a squaring for a multiplication 
by a constant depending on the base point $P$ 
(see Corollary~\ref{cor:Montgomery-complexity}).

\subsection*{Kummer curves}

We consider the structure of $\cK(E) = E/\{[\pm1]\}$ and $\cK(C) = C/\{[\pm1]\}$ 
for elliptic curves $E = E_{c^2}$ in $\ZZ/4\ZZ$-normal form and $C = C_{c}$ 
in split $\bbmu_4$-normal form, respectively.  
The former has a natural identification with $\PP^1$ equipped with the 
covering $\pi_1: E \rightarrow \cK(E)$, given by 
$$
(X_0:X_1:X_2:X_3) \mapsto 
\left\{
\begin{array}{l}
(X_0:X_1),\\
(X_3:X_2).
\end{array}
\right.
$$
The latter quotient has a plane model $\cK(C): Y^2 = c^2XZ$ in $\PP^2$ obtained 
by taking the $[-1]$-invariant basis $\{X_0,X_1+X_3,X_2\}$. 
For $E = E_{c^2}$ and $C = C_c$ as above, the isomorphism 
$\iota: C \rightarrow E$ of Theorem~\ref{thm:mu4-to-C4-isomorphism} 
induces an isomorphism $\iota: \cK(C) \rightarrow \cK(E) = \PP^1$ of 
Kummer curves given by 
$$
\iota(X:Y:Z) = \left\{
\begin{array}{l}
(c X : Y),\\
(Y : c Z),
\end{array}
\right.
$$
with inverse $(U_0:U_1) \mapsto (U_0^2 : c U_0 U_1 : U_1^2)$. 
Hereafter we fix this isomorphism, 
and obtain the covering morphism $C \rightarrow \cK(E)$:
$$
(X_0:X_1:X_2:X_3) \mapsto 
\left\{
\begin{array}{l}
(cX_0:X_1+X_3),\\
(X_1+X_3:cX_2).
\end{array}
\right.
$$
We denote this common Kummer curve by $\cK$, to distinguish the curve 
with induced structure from the elliptic curve covering (by both $E$ 
and $C$) from $\PP^1$.  

\subsection*{Montgomery endomorphism}

The Kummer curve $\cK$ (of an arbitrary elliptic curve $E$)
no longer supports an addition morphism, however scalar multiplication 
$[n]$ is well-defined, since $[-1]$ commutes with $[n]$.
We investigate the general construction of the Montgomery ladder for 
the Kummer quotient.  For this purpose we define the {\it Montgomery endomorphism} 
$E \times E \rightarrow E \times E$:
$$
\left(
\begin{array}{cc}
2\ & 0 \\
1\ & 1 
\end{array}
\right)(Q,R) = (2Q,Q+R).
$$
In general this endomorphism, denoted $\varphi$, is not well-defined 
on $\cK \times \cK$.  Instead, for fixed $P \in E(k)$ we consider 
$$
\Delta_P = \{ (Q,R) \in E \times E \;|\; Q - R = P \} \isom E,
$$
and let $\cK(\Delta_P)$ be the image of $\Delta_P$ in $\cK \times \cK$,
which we call a {\it Kummer-oriented curve}.  
In what follows we develop algorithmically the following observations 
(see Theorems~\ref{thm:Z4-Kummer-oriented}, \ref{thm:mu4-Kummer-oriented},
and~\ref{thm:Kummer-oriented-Montgomery}):
\begin{enumerate}
\item
The morphism $\Delta_P \rightarrow \cK(\Delta_P)$ is an isomorphism 
for any $P \not\in E[2]$.
\item
The Montgomery endomorphism is well-defined on $\cK(\Delta_P)$. 
\end{enumerate}

By means of the elliptic curve structure on $\Delta_P$ determined 
by the isomorphism $E \rightarrow \Delta_P$ given by $Q \mapsto 
(Q,Q-P)$, the Montgomery endomorphism is the duplication morphism
(i.e.\ $\varphi(Q,Q-P) = (2Q,2Q-P)$).
On the other hand, the Montgomery endomorphism allows us to 
represent scalar multiplication on $P$ symmetrically as a sequence 
of compositions.  Precisely, we let $\varphi_0 = \varphi$, let
$\sigma$ be the involution $\sigma(Q,R) = (-R,-Q)$ of $\Delta_P$, 
which induces the exchange of factors on $\cK(\Delta_P)$, and 
set $\varphi_1 = \sigma \circ \varphi \circ \sigma$.  
For an integer $n$ with binary representation $n_r n_{r-1} \dots 
n_1 n_0$ we may compute $nP$ by the sequence
$$
\varphi_{n_0}\circ\varphi_{n_1}\cdots\circ\varphi_{n_{r-1}}(P,\oO) = ((n+1)P,nP),
$$
returning the second component. 
\ignore{
def MontgomeryScalar(v,n):
    phi0 = Matrix([[1,1],[0,2]])
    phi1 = Matrix([[2,0],[1,1]])
    phi = (phi0,phi1)
    t = ZZ(n).bits()
    t.reverse()
    for i in t:
        v = phi[i]*v
    return v

V = FreeModule(ZZ,2)
v = V([1,0])
for i in range(16):
    print '
}

This composition representation for scalar multiplication on 
$E \times E$ is a double-and-always-add algorithm~\cite{Coron1999}, 
which provides a symmetry protection against side-channel 
attacks in cryptography (see Joye and Yen~\cite[Section~4]{JoyeYen}), 
but is inefficient due to insertion of redundant additions.  
When applied to $\cK(\Delta_P)$, on the other hand, this gives 
a (potentially) efficient algorithm, conjugate duplication, 
for carrying out scalar multiplication.
In view of this, $\cK(\Delta_P)$ should be thought of as a model 
oriented for carrying out efficient scalar multiplication on a 
fixed point $P$ in $E(k)$.

\subsection*{The Kummer-oriented curves $\cK(\Delta_P)$}

Let $E = E_{c^2}$ be a curve in $\ZZ/4\ZZ$-normal form, let 
$P = (t_0:t_1:t_2:t_3)$ be a fixed point in $E(k)$, and let 
$\cK(\Delta_P)$ be the Kummer-oriented curve in $\cK^2$, with 
coordinate functions $((U_0,U_1),(V_0,V_1))$. 

\begin{theorem}
\label{thm:Z4-Kummer-oriented}
The Kummer-oriented curve $\cK(\Delta_P)$ in $\cK^2$, for $P = (t_0:t_1:t_2:t_3)$,  
has defining equation
$$
t_0^2 (U_0 V_1 + U_1 V_0)^2 + t_1^2 (U_0 V_0 + U_1 V_1)^2 = c^2 t_0 t_1 U_0 U_1 V_0 V_1. 
$$
If $P$ is not a $2$-torsion point, the morphism $\kappa: 
E \rightarrow \cK(\Delta_P)$, defined by $Q \mapsto (\overline{Q},\overline{Q-P})$, 
is an isomorphism, given by 
$$
\begin{array}{r@{\,}c@{\,}c@{\,}c@{\,}l}
\pi_1 \circ \kappa (X_0:X_1:X_2:X_3) & = & (U_0:U_1) & = & 
\left\{
\begin{array}{c}
(X_0 : X_1),\\ 
(X_3 : X_2),
\end{array}
\right.\\[4mm]
\pi_2 \circ \kappa (X_0:X_1:X_2:X_3) & = & (V_0:V_1) & = & 
\left\{
\begin{array}{c}
(t_0 X_0 + t_2 X_2 : t_3 X_1 + t_1 X_3),\\
(t_1 X_1 + t_3 X_3 : t_2 X_0 + t_0 X_2),
\end{array}
\right.
\end{array}
$$
with inverse
$$
\begin{array}{r@{\,}c@{\,}l}
\pi_1 \circ \kappa^{-1}((U_0:U_1),(V_0:V_1)) & = & (U_0:U_1)\\[2mm]
\pi_2 \circ \kappa^{-1}((U_0:U_1),(V_0:V_1)) & = & 
\left\{
\begin{array}{l}
( t_1 U_0 V_0 + t_2 U_1 V_1 : t_0 U_0 V_1 + t_3 U_1 V_0 ),\\
( t_3 U_0 V_1 + t_0 U_1 V_0 : t_2 U_0 V_0 + t_1 U_1 V_1 ).
\end{array}
\right.
\end{array}
$$
\end{theorem}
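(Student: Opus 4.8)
The statement bundles three assertions: the explicit formula for $\kappa$, the defining equation of the image $\cK(\Delta_P)$ in $\cK^2 = \PP^1 \times \PP^1$, and the explicit inverse. The plan is to read off $\kappa$ from data already in hand, to settle the isomorphism claim by a short group-theoretic argument, and to pin down the defining equation and the inverse by substitution. The first component is essentially free: by definition $\kappa(Q) = (\overline{Q}, \overline{Q-P})$, so $\pi_1 \circ \kappa(Q) = \overline{Q} = \pi_1(Q)$ is just the Kummer coordinate $(X_0:X_1) = (X_3:X_2)$ of $Q$ coming from Theorem~\ref{thm:Z4Symmetries}.

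For the second component I would observe that $\pi_2 \circ \kappa(Q) = \overline{Q-P} = \pi_1(Q + (-P))$, so it is nothing but the addition-law projection $\pi_1 \circ \mu$ of Theorem~\ref{thm:C4-addition-laws} evaluated with second argument $-P = (t_3:t_2:t_1:t_0)$, using the form of $[-1]$ in Theorem~\ref{thm:Z4Symmetries}. Substituting $(Y_0,Y_1,Y_2,Y_3) = (t_3,t_2,t_1,t_0)$ into the two basis elements $(X_0 Y_3 + X_2 Y_1,\ X_1 Y_0 + X_3 Y_2)$ and $(X_1 Y_2 + X_3 Y_0,\ X_0 Y_1 + X_2 Y_3)$ reproduces exactly the two stated forms $(t_0 X_0 + t_2 X_2 : t_3 X_1 + t_1 X_3)$ and $(t_1 X_1 + t_3 X_3 : t_2 X_0 + t_0 X_2)$ for $\pi_2 \circ \kappa$. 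Since these are two representatives of a single addition-law projection they agree modulo the ideal of $E$ and cover the curve, so $\kappa$ is a well-defined morphism.

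Next I would show $\kappa$ is an isomorphism onto its image when $P \notin E[2]$, which reduces to injectivity. If $\kappa(Q) = \kappa(Q')$, then $\overline{Q} = \overline{Q'}$ forces $Q' = \pm Q$, and $\overline{Q-P} = \overline{Q'-P}$ forces $Q'-P = \pm(Q-P)$. The case $Q'=Q$ is what we want; the alternative $Q'=-Q$ with $-Q-P = -(Q-P)$ gives $2P = \oO$, excluded by hypothesis, while $-Q-P = Q-P$ gives $Q \in E[2]$ and hence $Q'=-Q=Q$ again. Thus $\kappa$ is injective, and the exhibited $\kappa^{-1}$ (once verified) certifies that it is an isomorphism of smooth curves, not merely a bijection on points.

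It remains to establish the defining equation and the inverse. The two projections $Q \mapsto \overline{Q}$ and $Q \mapsto \overline{Q-P}$ are each the degree-$2$ Kummer quotient (the latter precomposed with the translation isomorphism $\tau_{-P}$), so the image has bidegree $(2,2)$ in $\PP^1 \times \PP^1$, hence arithmetic genus $(2-1)(2-1)=1$ matching $E$, and a single bidegree-$(2,2)$ form cuts it out. I would confirm the claimed form $t_0^2(U_0 V_1 + U_1 V_0)^2 + t_1^2(U_0 V_0 + U_1 V_1)^2 + c^2 t_0 t_1 U_0 U_1 V_0 V_1$ is the right one by substituting $U = (X_0:X_1)$, $V = (t_0 X_0 + t_2 X_2 : t_3 X_1 + t_1 X_3)$ and checking it vanishes modulo the defining ideal of $E = E_{c^2}$ together with the relations $t_0 t_2 = t_1 t_3$ and $(t_0+t_1+t_2+t_3)^2 = c^2 t_0 t_2$ expressing $P \in E$; the collapse of the coefficients to depend only on $t_0, t_1$ (and $c$) is an artifact of these relations. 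The inverse is handled the same way: substituting the formula for $\kappa$ into the stated $\pi_2 \circ \kappa^{-1}$ must return $\pi_2(Q) = (X_0:X_3) = (X_1:X_2)$. I expect this last step --- the elimination that produces the defining equation, together with the companion point-recovery formula for $\kappa^{-1}$ --- to be the main computational obstacle, while the remaining substitutions are routine symbolic identities that can be discharged mechanically (as in the Echidna implementation).
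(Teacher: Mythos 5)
Your proposal is correct, and its computational core coincides with the paper's: the paper likewise reads off $\kappa$ from the addition law (your identification $\pi_2 \circ \kappa = \pi_1 \circ \mu(-,\,-P)$ with $-P = (t_3:t_2:t_1:t_0)$, via Theorems~\ref{thm:C4-addition-laws} and~\ref{thm:Z4Symmetries}, is exactly ``the form of $\kappa$ follows from the definition of the addition law''), and it relegates the defining equation (computed by resultants) and the inverse to symbolic verification, just as you do. Where you genuinely diverge is the isomorphism step. The paper argues abstractly: $\pi_1 \circ \kappa$ is the degree-$2$ Kummer quotient, and for $P \notin E[2]$ the map $[-1]$ induces a \emph{nontrivial} involution $(\overline{Q},\overline{Q-P}) \mapsto (\overline{Q},\overline{Q+P})$ on $\cK(\Delta_P)$, so $\kappa$ cannot factor through $E/\{[\pm 1]\}$ and has degree one; an isomorphism then follows because the image curve is nonsingular (a birational morphism onto a nonsingular projective curve is an isomorphism). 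Your injectivity case analysis is the pointwise shadow of the same dichotomy $\overline{Q-P} \ne \overline{Q+P}$, and you were right to flag that in characteristic~$2$ a point bijection is not enough --- a purely inseparable degree-$2$ map is bijective on points --- and to close that gap by invoking the verified two-sided inverse. This trades the paper's unproved nonsingularity assertion for the symbolic verification of $\kappa^{-1}$, which the theorem statement demands anyway, so your route is arguably more self-contained; the paper's buys the isomorphism without needing the inverse formula at all. Two small cautions: your bidegree-$(2,2)$ computation for the image quietly presupposes $\deg \kappa = 1$ (if $\kappa$ had degree $2$, the image would be a $(1,1)$-curve), so it is only valid \emph{after} the isomorphism is certified --- your ordering of the argument should make this dependence explicit --- and, for both $\kappa$ and $\kappa^{-1}$, the symbolic check should include that the two listed representatives have no common zero on the relevant curve, the same standard of verification the paper implicitly applies.
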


\begin{proof} 
The form of $\kappa$ follows from the definition of the 
addition law. The equation for the image curve can be 
computed by taking resultants, and verified symbolically.
The composition of $\kappa$ with projection onto the 
first factor is the Kummer quotient of degree 2. 
However, for all $P$ not in $E[2]$, the inverse morphism 
induces a nontrivial involution 
$$
(\overline{Q},\overline{Q-P}) \longmapsto 
(\overline{-Q},\overline{-Q-P}) = 
(\overline{Q},\overline{Q+P})
$$ 
on $\cK(\Delta_P)$. Consequently the map to $\cK(\Delta_P)$ 
has degree one, and being nonsingular, gives an isomorphism.  
\qed \end{proof}

\noindent{\bf Remark.}
We observe that $\cK(\Delta_P) = \cK(\Delta_{-P})$ in $\cK^2$, 
but that a change of base point changes $\kappa$ by $[-1]$. 
\vspace{2mm}

\noindent
The isomorphism of $E_{c^2}$ with $C_{c}$ lets us derive the 
analogous result for curves in $\bbmu_4$-normal form.

\begin{theorem}
\label{thm:mu4-Kummer-oriented}
Let $C = C_c$ be an elliptic curve in split $\bbmu_4$-normal 
form with rational point $S = (s_0:s_1:s_2:s_3)$. 
The Kummer-oriented curve $\cK(\Delta_S)$ in $\cK^2$ is given by 
the equation
$$
s_0 (U_0 V_1 + U_1 V_0)^2 + s_2 (U_0 V_0 + U_1 V_1)^2 = c (s_1 + s_3) U_0 U_1 V_0 V_1. 
$$
If $S$ is not a $2$-torsion point, the morphism 
$\lambda: C \rightarrow \cK(\Delta_S)$ is an 
isomorphism, and defined by 
$$
\begin{array}{r@{\,}c@{\,}l}
\pi_1 \circ \lambda (X_0:X_1:X_2:X_3) & = & 
\left\{
\begin{array}{l}
(c X_0 : X_1 + X_3),\\
(X_1 + X_3 : c X_2),
\end{array}
\right.\\[4mm]
\pi_2 \circ \lambda (X_0:X_1:X_2:X_3) & = & 
\left\{
\begin{array}{l}
(s_0 X_0 + s_2 X_2 : s_1 X_1 + s_3 X_3),\\
(s_3 X_1 + s_1 X_3 : s_2 X_0 + s_0 X_2),
\end{array}
\right.
\end{array}
$$
with inverse $\lambda^{-1}((U_0:U_1),(V_0:V_1))$ equal to 
\ignore{
$$
\left\{
\begin{array}{@{}l}
(
  (s_1 + s_3) U_0^2 V_0 : 
  (s_0 U_0^2 + s_2 U_1^2) V_1 + c s_1 U_0 U_1 V_0 :
  (s_1 + s_3) U_1^2 V_0 : 
  (s_0 U_0^2 + s_2 U_1^2) V_1 + c s_3 U_0 U_1 V_0
),\\
(
  (s_1 + s_3) U_0^2 V_1 : 
  (s_2 U_0^2 + s_0 U_1^2) V_0 + c s_3 U_0 U_1 V_1 : 
  (s_1 + s_3) U_1^2 V_1 : 
  (s_2 U_0^2 + s_0 U_1^2) V_0 + c s_1 U_0 U_1 V_1
).
\end{array}
\right.
$$
}
\begin{center}
\scalebox{0.78}{
$
\left\{
\begin{array}{@{}l}
(
  (s_1 + s_3) U_0^2 V_0 : 
  (s_0 U_0^2 + s_2 U_1^2) V_1 + c s_1 U_0 U_1 V_0 :
  (s_1 + s_3) U_1^2 V_0 : 
  (s_0 U_0^2 + s_2 U_1^2) V_1 + c s_3 U_0 U_1 V_0
),\\
(
  (s_1 + s_3) U_0^2 V_1 : 
  (s_2 U_0^2 + s_0 U_1^2) V_0 + c s_3 U_0 U_1 V_1 : 
  (s_1 + s_3) U_1^2 V_1 : 
  (s_2 U_0^2 + s_0 U_1^2) V_0 + c s_1 U_0 U_1 V_1
).
\end{array}
\right.
$
} 
\end{center}
\end{theorem}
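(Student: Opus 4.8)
The plan is to transport the result from the $\ZZ/4\ZZ$-normal form (Theorem~\ref{thm:Z4-Kummer-oriented}) to the split $\bbmu_4$-normal form using the explicit isomorphism $\iota: C \to E$ of Theorem~\ref{thm:mu4-to-C4-isomorphism}, exactly as the sentence preceding the statement suggests. The key observation is that the covering $C \to \cK$ was \emph{defined} (in the ``Kummer curves'' subsection) to factor through $\iota$, so that $\overline{X}^{C} = \overline{\iota(X)}^{E}$ for the common Kummer curve $\cK$. Since $\iota$ is a group isomorphism, writing $P = \iota(S)$ gives $\iota(X) - P = \iota(X - S)$, whence $\overline{X-S}^{C} = \overline{\iota(X)-P}^{E}$. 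Combining the two coordinates yields the factorization $\lambda = \kappa \circ \iota$, where $\kappa: E \to \cK(\Delta_P)$ is the map of Theorem~\ref{thm:Z4-Kummer-oriented} with base point $P = \iota(S)$.

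From this factorization the structural claims are immediate. First, $\iota$ is always an isomorphism, and (since $\iota$ preserves the group law, hence $2$-torsion) $P = \iota(S) \in E[2]$ if and only if $S \in C[2]$; by Theorem~\ref{thm:Z4-Kummer-oriented}, $\kappa$ is an isomorphism precisely when $P \notin E[2]$, so $\lambda = \kappa \circ \iota$ is an isomorphism exactly when $S \notin C[2]$. Second, $\lambda(C) = \kappa(\iota(C)) = \kappa(E) = \cK(\Delta_P)$ as subvarieties of $\cK^2$, so $\cK(\Delta_S)$ coincides with $\cK(\Delta_P)$ and inherits its defining equation.

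To recover the stated equation I would substitute $(t_0:t_1:t_2:t_3) = \iota(S)$ into the $\ZZ/4\ZZ$-normal equation of Theorem~\ref{thm:Z4-Kummer-oriented} and simplify. Applying the skew-Segre embedding to the first representatives $\pi_1\circ\iota(S) = (c s_0 : s_1+s_3)$ and $\pi_2\circ\iota(S) = (s_0+s_2 : c s_1)$ gives $t_0 = c\,s_0(s_0+s_2)$ and $t_1 = (s_1+s_3)(s_0+s_2)$, so that $t_0^2 : t_1^2 : c^2 t_0 t_1 = c^2 s_0^2 : (s_1+s_3)^2 : c^3 s_0(s_1+s_3)$ after clearing the common factor $(s_0+s_2)^2$. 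Substituting the defining relation $(s_1+s_3)^2 = c^2 s_0 s_2$ of the split $\bbmu_4$-normal form collapses this triple to $s_0 : s_2 : c(s_1+s_3)$ (up to the overall scalar $c^2 s_0$), which is exactly the claimed equation. The explicit forms of $\pi_1\circ\lambda$, $\pi_2\circ\lambda$, and $\lambda^{-1}$ are then obtained by composing the corresponding formulas of Theorem~\ref{thm:Z4-Kummer-oriented} with $\iota$ (respectively $\iota^{-1}$) and reducing modulo the defining ideal; here $\pi_1\circ\lambda$ is simply the fixed covering $C \to \cK$, while the remaining two require the reduction.

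The main obstacle is not conceptual but computational: confirming that these compositions, after reduction using the curve relations, collapse the degree introduced by the quadratic map $\iota$ down to the compact bilinear (resp.\ quadratic) expressions printed in the statement, and checking that the various piecewise representatives glue to well-defined morphisms on the correct open cover (handling the loci $s_0+s_2 = 0$, $c s_0 = 0$, etc., where one must pass to the alternate representative). As in the proofs of Theorems~\ref{thm:Z4-Kummer-oriented} and~\ref{thm:mu4-to-C4-isomorphism}, I would discharge this bookkeeping by a symbolic verification (e.g.\ in Echidna~\cite{Echidna}), leaving the isomorphism property and the derivation of the equation to the conceptual transport argument above.
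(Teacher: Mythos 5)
Your proposal is correct and follows essentially the same route as the paper: the paper's proof likewise transports Theorem~\ref{thm:Z4-Kummer-oriented} through the isomorphism $\iota$ of Theorem~\ref{thm:mu4-to-C4-isomorphism}, identifying $\cK(\Delta_P)$ with $\cK(\Delta_S)$ via $(s_0:s_1+s_3:s_2)=(t_0^2:c\,t_0t_1:t_1^2)$ --- which is exactly your substitution once you invoke the curve relation $(s_1+s_3)^2=c^2s_0s_2$ --- and then obtains $\lambda$ from the addition-law projections of Corollary~\ref{cor:mu4-addition-law-projections} while verifying the inverse symbolically. Your only cosmetic difference is deriving $\pi_2\circ\lambda$ as the composition $\kappa\circ\iota$ rather than citing those projections directly, and both you and the paper defer the same residual bookkeeping to symbolic verification.
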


\begin{proof} 
The isomorphism $\iota: E_{c^2} \rightarrow C_c$ sending $S$ 
to $T = (t_0:t_1:t_2:t_3)$ induces the isomorphism
$(s_0:s_1+s_3:s_2) = (t_0^2:ct_0t_1:t_1^2)$, by which we identify 
$\cK(\Delta_P)$ and $\cK(\Delta_S)$. 
The form of the morphism $\lambda$ follows from the form of 
projective addition laws of Corollary~\ref{cor:mu4-addition-law-projections},
and its inverse can be verified symbolically.
\qed \end{proof}

We now give explicit maps and complexity analysis for the 
Montgomery endomorphism $\varphi(Q,R) = (2Q,Q+R)$, on the 
Kummer quotient $\cK(\Delta_P)$ (or $\cK(\Delta_S)$ setting 
$(t_0:t_1) = (cs_0:s_1+s_3) = (s_1+s_3:cs_2)$).
In view of the application to scalar multiplication on $E$ 
or $C$, this gives an asymptotic complexity per bit of $n$, 
for computing $[n]P$. 

\begin{theorem}
\label{thm:Kummer-oriented-Montgomery}
The Montgomery endomorphism $\varphi$ is defined by:
$$
\begin{array}{l}
\pi_1 \circ \varphi((U_0:U_1),(V_0:V_1)) = (U_0^4 + U_1^4 : c U_0^2 U_1^2),\\
\pi_2 \circ \varphi((U_0:U_1),(V_0:V_1)) = ( t_1 (U_0 V_0 + U_1 V_1)^2 : t_0 (U_0 V_1 + U_1 V_0)^2).
\end{array}
$$
\end{theorem}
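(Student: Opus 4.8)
The plan is to exploit the fact, already noted above, that under the isomorphism $\kappa$ of Theorem~\ref{thm:Z4-Kummer-oriented} and the identification $E \isom \Delta_P$, $Q \mapsto (Q,Q-P)$, the Montgomery endomorphism is duplication: it sends $(\overline{Q},\overline{Q-P})$ to $(\overline{2Q},\overline{2Q-P})$. This identifies the two components we must compute --- $\pi_1\circ\varphi$ returns $\overline{2Q}$ and $\pi_2\circ\varphi$ returns $\overline{2Q-P}$ --- and, once explicit polynomial representatives are exhibited, establishes that $\varphi$ is well-defined on $\cK(\Delta_P)$.

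For $\pi_1\circ\varphi$ I would observe that $\overline{2Q}$ depends only on $\overline{Q}=(U_0:U_1)$, so this is the Kummer duplication. Writing $Q$ through the skew-Segre embedding of Theorem~\ref{thm:Z4Symmetries} as $(U_0W_0:U_1W_0:U_1W_1:U_0W_1)$, with $(W_0:W_1)=\pi_2(Q)$, and substituting into the projection $\pi_1\circ[2]=(X_0X_3+X_1X_2:X_0X_1+X_2X_3)$ of the duplication corollary following Theorem~\ref{thm:C4-addition-laws}, one obtains the representative $((U_0^2+U_1^2)W_0W_1:U_0U_1(W_0^2+W_1^2))$. Clearing the auxiliary coordinate $(W_0:W_1)$ with the defining relation $(U_0+U_1)^2(W_0+W_1)^2=cU_0U_1W_0W_1$ of the $\PP^1\times\PP^1$ model then collapses this to $(U_0^4+U_1^4:cU_0^2U_1^2)$. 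In characteristic~$2$ the needed identity is immediate after cross-multiplying and using $U_0^4+U_1^4=(U_0^2+U_1^2)^2$.

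For $\pi_2\circ\varphi$ the quantity to produce is $\overline{2Q-P}$, which is the differential (pseudo-)addition $\overline{(Q)+(Q-P)}$ with \emph{known} difference $P$. I would write $2Q-P=(2Q)+(-P)$ with $-P=(t_3:t_2:t_1:t_0)$, build $2Q$ through the skew-Segre embedding from $\pi_1([2]Q)$ and $\pi_2([2]Q)=((X_0+X_2)^2:(X_1+X_3)^2)$, and feed this together with $-P$ into the Kummer addition-law projection $\pi_1\circ\mu$ of Theorem~\ref{thm:C4-addition-laws}. Reducing the resulting bidegree-$(2,2)$ expression modulo the defining equation of $\cK(\Delta_P)$ from Theorem~\ref{thm:Z4-Kummer-oriented} should collapse it to the symmetric bilinear squares, weighted by the Kummer image $\overline{P}=(t_0:t_1)$, giving $(t_1(U_0V_0+U_1V_1)^2:t_0(U_0V_1+U_1V_0)^2)$. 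The weighting by $(t_1:t_0)$ rather than $(t_0:t_1)$ is exactly the inversion of the fixed difference characteristic of differential addition, and one can check the formula directly at $Q=\oO$ and $Q=P$, where it returns $\overline{-P}=(t_0:t_1)$ and $\overline{P}=(t_0:t_1)$ respectively.

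I expect the second component to be the main obstacle. Eliminating the vertical coordinate $\pi_2(Q)$ and demonstrating that the composite of duplication with the addition-law projection genuinely reduces to the clean $(2,2)$-form is a nontrivial polynomial identity that relies essentially on the equation defining $\cK(\Delta_P)$; rather than force it by hand I would confirm it by the symbolic verification used throughout --- consistency with the addition-law dimension formulas of Kohel~\cite{Kohel} together with a direct check in Echidna~\cite{Echidna} --- exactly as for the companion statements in this section.
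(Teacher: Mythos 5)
Your proposal is correct and is essentially the paper's own route: the paper prints no proof for this theorem at all, relying on the section's standing method of composing the maps already established (the isomorphism $\kappa$ of Theorem~\ref{thm:Z4-Kummer-oriented}, the duplication projections following Theorem~\ref{thm:C4-addition-laws}, and the addition-law projections) and confirming the resulting polynomial identities symbolically in Echidna~\cite{Echidna} --- exactly the derivation-plus-symbolic-check you describe, and your hand computation of $\pi_1 \circ \varphi$ (skew-Segre substitution into $\pi_1 \circ [2]$, then elimination of the second $\PP^1$-coordinate via the curve relation on $E_1$) correctly fills in the one step the paper leaves implicit. Your identification of $\pi_2 \circ \varphi$ as differential addition weighted by the inverted difference $(t_1:t_0)$, with the spot checks at $Q = \oO$ and $Q = P$ and the final appeal to the same symbolic verification used throughout the section, matches the paper's level of rigor, so there is no gap.
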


The sets of defining polynomials are well-defined everywhere 
and the following maps are projectively equivalent modulo the 
defining ideal: 
$$
\begin{array}{l}
( t_1 (U_0 V_0 + U_1 V_1)^2 : t_0 (U_0 V_1 + U_1 V_0)^2)\\
= ( t_0 (U_0 V_0 + U_1 V_1)^2 : t_1 (U_0 V_0 + U_1 V_1)^2 + c\,t_0 (U_0 V_0) (U_1 V_1) )\\
= ( t_0 (U_0 V_1 + U_1 V_0)^2 + c\,t_1 (U_0 V_1) (U_1 V_0) : t_1 (U_0 V_1 + U_1 V_0)^2 ).
\end{array}
$$
Assuming the point normalization with $t_0 = 1$ or $t_1 = 1$, this 
immediately gives the following corollary.

\begin{corollary}
\label{cor:Montgomery-complexity}
The Montgomery endomorphism on $\cK(\Delta_P)$ can be computed with 
$4\Mul + 5\Sqr + 1\mul_t + 1\mul_c$ or with 
$4\Mul + 4\Sqr + 1\mul_t + 2\mul_c$.
\end{corollary}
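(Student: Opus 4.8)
The plan is to evaluate the two projections $\pi_1 \circ \varphi$ and $\pi_2 \circ \varphi$ of Theorem~\ref{thm:Kummer-oriented-Montgomery} independently, using that each factor of $\cK \times \cK$ is projective so that the two coordinate pairs may be scaled separately, and then to tally field operations. Throughout I work in characteristic $2$ and assume the normalization $t_0 = 1$ (the case $t_1 = 1$ being symmetric).

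First I would treat $\pi_1 \circ \varphi = (U_0^4 + U_1^4 : c\,U_0^2 U_1^2)$. Setting $A = U_0^2$ and $B = U_1^2$ costs $2\Sqr$. In characteristic $2$ one has $U_0^4 + U_1^4 = (U_0^2 + U_1^2)^2 = (A + B)^2$, so the first coordinate needs one further $\Sqr$, while the second is $c\,(A B)$, costing $1\Mul + 1\mul_c$. Thus $\pi_1 \circ \varphi$ is obtained in $1\Mul + 3\Sqr + 1\mul_c$, and no sharing with $\pi_2 \circ \varphi$ is available since the latter also involves the $V_i$.

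For the first complexity bound I would evaluate $\pi_2 \circ \varphi = (t_1 (U_0 V_0 + U_1 V_1)^2 : (U_0 V_1 + U_1 V_0)^2)$ by the Karatsuba identity
$$
(U_0 V_0 + U_1 V_1) + (U_0 V_1 + U_1 V_0) = (U_0 + U_1)(V_0 + V_1),
$$
so that both cross-sums are recovered from the three products $U_0 V_0$, $U_1 V_1$ and $(U_0 + U_1)(V_0 + V_1)$, i.e.\ $3\Mul$; two squarings then give the two squared cross-sums and one $\mul_t$ applies $t_1$. This costs $3\Mul + 2\Sqr + 1\mul_t$, and summing with $\pi_1 \circ \varphi$ yields $4\Mul + 5\Sqr + 1\mul_t + 1\mul_c$.

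For the second bound I would instead use the projectively equivalent form recorded before the corollary,
$$
\pi_2 \circ \varphi = \big((U_0 V_0 + U_1 V_1)^2 : t_1 (U_0 V_0 + U_1 V_1)^2 + c\,(U_0 V_0)(U_1 V_1)\big),
$$
valid on the curve by the defining relation of Theorem~\ref{thm:Z4-Kummer-oriented}. Here only $P_0 = U_0 V_0$, $P_1 = U_1 V_1$ and their product $P_0 P_1$ are needed ($3\Mul$), together with $(P_0 + P_1)^2$ ($1\Sqr$), $c\,P_0 P_1$ ($1\mul_c$) and $t_1 (P_0 + P_1)^2$ ($1\mul_t$); note that $(U_0 V_1 + U_1 V_0)$ no longer appears. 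Hence $\pi_2 \circ \varphi$ costs $3\Mul + 1\Sqr + 1\mul_c + 1\mul_t$, and combining with $\pi_1 \circ \varphi$ gives $4\Mul + 4\Sqr + 1\mul_t + 2\mul_c$, trading one squaring for one multiplication by $c$. The calculation is almost entirely bookkeeping; the one step to watch is this second variant, where I must check that passing to the equivalent form --- which silently invokes the curve relation of Theorem~\ref{thm:Z4-Kummer-oriented} relating $(U_0 V_1 + U_1 V_0)^2$, $(U_0 V_0 + U_1 V_1)^2$ and $U_0 U_1 V_0 V_1$ --- genuinely eliminates the product $(U_0 V_1 + U_1 V_0)$ without reintroducing a multiplication elsewhere, so that the count really reduces to the three products $P_0, P_1, P_0 P_1$. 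Since that relation is already established, the remainder is routine.
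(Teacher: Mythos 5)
Your proposal is correct and is essentially the paper's own argument: the paper derives the corollary ``immediately'' from the displayed projectively equivalent forms of $\pi_2 \circ \varphi$ under the normalization $t_0 = 1$ or $t_1 = 1$, and your explicit schedule ($1\Mul + 3\Sqr + 1\mul_c$ for $\pi_1 \circ \varphi$, then either $3\Mul + 2\Sqr + 1\mul_t$ via Karatsuba or $3\Mul + 1\Sqr + 1\mul_t + 1\mul_c$ via the curve-relation form) supplies exactly the operation counts the paper leaves implicit. One minor remark: cross-multiplying the equivalent form against the equation of $\cK(\Delta_P)$ in Theorem~\ref{thm:Z4-Kummer-oriented} actually produces $c^2$, not $c$, in the term $c\,t_0 (U_0 V_0)(U_1 V_1)$ --- a discrepancy internal to the paper's displays rather than a flaw in your reasoning --- and it is harmless for both bounds since the paper's convention counts multiplication by a bounded power of $c$ as a single $\mul_c$.
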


The formulas so obtained agree with those of Gaudry and Lubicz~\cite{GaudryLubicz}. 
The first complexity result agrees with theirs and the second obtains a trade-off 
of one $\mul_c$ for one $\Sqr$ using the explicit equation of $\cK(\Delta_P)$ 
in $\cK^2$. Finally, the isomorphisms of Theorems~\ref{thm:Z4-Kummer-oriented}
and~\ref{thm:mu4-Kummer-oriented} permit point recovery, hence scalar 
multiplication on the respective elliptic curves.

\section{Conclusion}
\label{sec:Conclusion}

We conclude with a tabulation of the best known complexity results 
for doubling and addition algorithms on projective curves (taking 
the best reported algorithm from the EFD~\cite{EFD}). We include 
the Hessian model, the only cubic curve model, for comparison.  
It covers only curves with a rational $3$-torsion point. Binary 
Edwards curves~\cite{BinaryEdwards} cover general ordinary curves, 
but the best complexity result we give here is for $d_1 = d_2$ 
which has a rational $4$-torsion point.
Similarly, the L\'opez-Dahab model with $a_2=0$ admits a rational 
$4$-torsion point, hence covers the same classes, but the fastest 
arithmetic is achieved on the quadratic twists with $a_2=1$. 
The results here for addition and duplication on $\bbmu_4$-normal 
form report the better result (in terms of constant multiplications 
$\mul$) for the non-split $\bbmu_4$ model (see the remark after 
Corollary~\ref{cor:mu4-duplication-complexity} and 
Corollary~\ref{cor:nonsplit-mu4-addition-complexity} in the appendix).
\begin{center}
\begin{tabular}{|@{\;}l|@{\;}l|@{\;}l|}
\hline 
Curve model    & Doubling & Addition \\
\hline 
$\ZZ/4\ZZ$-normal form & $7\Mul + 2\Sqr$         & $12\Mul$ \\
Hessian                & $6\Mul + 3\Sqr$         & $12\Mul$ \\
Binary Edwards         & $2\Mul + 5\Sqr + 2\mul$ & $16\Mul + 1\Sqr + 4\mul$ \\ 
L\'opez-Dahab ($a_2=0$)  & $2\Mul + 5\Sqr + 1\mul$ & $14\Mul + 3\Sqr$ \\
L\'opez-Dahab ($a_2=1$)  & $2\Mul + 4\Sqr + 2\mul$ & $13\Mul + 3\Sqr$ \\
$\bbmu_4$-normal form  & $2\Mul + 5\Sqr + 2\mul$ &\;\;$7\Mul + 2\Sqr$ \\
\hline
\end{tabular}
\end{center}
This provides for the best known addition algorithm combined with 
essentially optimal doubling.  We note that binary Edwards curves 
with $d_1 = d_2$ and the L\'opez-Dahab model with $a_2=0$ and have 
canonical projective embeddings in $\PP^3$ such that the 
transformation to $\bbmu_4$-normal form is linear, so that, 
conversely, these models can benefit from the efficient addition 
of the $\bbmu_4$-normal form.

\section*{Appendix}

By means of a renormalization of variables, the split $\bbmu_4$-normal 
form can be put in $\bbmu_4$-normal form
$
(X_0+X_2)^2 = X_1 X_3,\ s(X_1+X_3)^2 = X_0 X_2,
$
where $s = c^{-4}$. This form loses the elementary symmetry given by 
cyclic permutation of the coordinates, but by the Remark following 
Corollary~\ref{cor:mu4-duplication-complexity}, we are able to save 
on multiplications by scalars in duplication.  This renormalization 
gives the following addition laws 
(as a consequence of Theorem~\ref{thm:mu4-addition-laws}), and 
give an analogous savings for addition. 

\begin{theorem}
\label{thm:nonsplit-mu4-addition-laws}
Let $C$ be an elliptic curve in $\bbmu_4$-normal form:
A basis for the space of addition laws of bidegree $(2,2)$ is given by: 
\begin{center}
\scalebox{0.80}{
$
\left\{
\begin{array}{@{}l@{}}
\big(
    (X_0 Y_0 + X_2 Y_2)^2,\,
    X_0 X_1 Y_0 Y_1 + X_2 X_3 Y_2 Y_3,\,
    s(X_1 Y_1 + X_3 Y_3)^2,\,
    X_0 X_3 Y_0 Y_3 + X_1 X_2 Y_1 Y_2\,
\big),\\
\displaystyle
\big(
    X_0 X_1 Y_0 Y_3 + X_2 X_3 Y_1 Y_2,\,
    (X_1 Y_0 + X_3 Y_2)^2,\,
    X_0 X_3 Y_2 Y_3 + X_1 X_2 Y_0 Y_1,\,
    (X_0 Y_3 + X_2 Y_1)^2
\big),\\
\big(
    s (X_1 Y_3 + X_3 Y_1)^2,\,
    X_0 X_3 Y_1 Y_2 + X_1 X_2 Y_0 Y_3,\,
    (X_0 Y_2 + X_2 Y_0)^2,\,
    X_0 X_1 Y_2 Y_3 + X_2 X_3 Y_0 Y_1\,
\big),\\
\big(
    X_0 X_3 Y_0 Y_1 + X_1 X_2 Y_2 Y_3,\,
    (X_0 Y_1 + X_2 Y_3)^2,\,
    X_0 X_1 Y_1 Y_2 + X_2 X_3 Y_0 Y_3,\,
    (X_1 Y_2 + X_3 Y_0)^2
\big).
\end{array}
\right\}\!\cdot
$
} 
\end{center}
\end{theorem}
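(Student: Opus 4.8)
The plan is to obtain this theorem directly from the split case, Theorem~\ref{thm:mu4-addition-laws}, by transporting the addition laws along the diagonal isomorphism that realizes the renormalization. Recall from Kohel~\cite[Section 7]{Kohel} that a linear isomorphism of curves embedded in $\PP^3$ induces a linear isomorphism between the spaces of addition laws of each fixed bidegree $(m,n)$, obtained by conjugating the addition morphism. Hence, once the renormalization is exhibited as a linear change of variables $\psi$, the four basis laws of the split $\bbmu_4$-normal form transform term-by-term into four laws for the $\bbmu_4$-normal form; and because $\psi$ induces a linear isomorphism of the $4$-dimensional space of bidegree-$(2,2)$ addition laws (the dimension being known by Kohel~\cite{Kohel}, as recalled in the proof of Theorem~\ref{thm:mu4-addition-laws}), the images again form a basis.

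First I would pin down $\psi$. Matching the split equations $(X_0+X_2)^2 = c^2 X_1 X_3$, $(X_1+X_3)^2 = c^2 X_0 X_2$ against $(X_0+X_2)^2 = X_1 X_3$, $s(X_1+X_3)^2 = X_0 X_2$ with $s = c^{-4}$ forces a diagonal scaling, and one checks that
$$
\psi(X_0:X_1:X_2:X_3) = (c X_0 : X_1 : c X_2 : X_3),
$$
with inverse $\psi^{-1}(X_0:X_1:X_2:X_3) = (c^{-1}X_0 : X_1 : c^{-1}X_2 : X_3)$, sends a point of the $\bbmu_4$-normal form to a point of the split $\bbmu_4$-normal form. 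It carries the identity $(1:1:0:1)$ to $\oO = (c:1:0:1)$, so $\psi$ is an isomorphism of elliptic curves, and conjugation by $\psi$ takes the addition morphism to the addition morphism.

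Then I would apply $\psi^{-1}\circ\mu\circ(\psi\times\psi)$ to each split basis law: substitute $(c X_0, X_1, c X_2, X_3)$ and $(c Y_0, Y_1, c Y_2, Y_3)$ into the four polynomials of Theorem~\ref{thm:mu4-addition-laws} and then rescale the output by $\psi^{-1}$. For the first law this produces, after dividing out the common projective factor $c^3$, exactly
$$
\big((X_0 Y_0 + X_2 Y_2)^2,\ X_0 X_1 Y_0 Y_1 + X_2 X_3 Y_2 Y_3,\ s(X_1 Y_1 + X_3 Y_3)^2,\ X_0 X_3 Y_0 Y_3 + X_1 X_2 Y_1 Y_2\big),
$$
the surviving power $c^{-4}$ landing as $s$ in precisely the squared coordinate built from the unscaled indices $1,3$, while all cross-term coordinates become constant-free; the remaining three laws follow identically, with $s$ appearing in whichever squared coordinate is built from the indices $1,3$.

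The one place any genuine work occurs — and the main thing to watch — is the bookkeeping of the powers of $c$: I must verify that under substitution-and-rescaling every coordinate of a given law acquires the same total $c$-power except for the single squared coordinate coming from the indices $1,3$, so that removing the common projective factor leaves a constant only there, equal to $s = c^{-4}$, and matching the asserted shape. This is routine and can be confirmed symbolically, as with the consistency checks in Echidna~\cite{Echidna} used elsewhere; no geometric input beyond Theorem~\ref{thm:mu4-addition-laws} and the functoriality of addition laws under linear isomorphism from Kohel~\cite[Section 7]{Kohel} is needed.
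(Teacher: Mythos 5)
Your proposal is correct and is essentially the paper's own argument: the appendix states Theorem~\ref{thm:nonsplit-mu4-addition-laws} precisely ``as a consequence of Theorem~\ref{thm:mu4-addition-laws}'' under the renormalization of coordinates, which is your diagonal map $\psi(X_0:X_1:X_2:X_3)=(cX_0:X_1:cX_2:X_3)$, with the basis property preserved by the linear-isomorphism functoriality of Kohel~\cite[Section 7]{Kohel}. Your bookkeeping of the powers of $c$ checks out (common projective factor $c^3$ for the first and third laws, $c^2$ for the second and fourth, with the residual $c^{-4}=s$ landing exactly on the squared coordinate built from the unscaled indices $1,3$), so nothing further is needed.
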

The absence of the constant $s$ in the 2nd and 4th addition laws permits 
us to save the $2\mul$ in the computation of addition.
\begin{corollary}
\label{cor:nonsplit-mu4-addition-complexity}
Addition of generic points on $C$ can be carried out in $7\Mul + 2\Sqr$.
\end{corollary}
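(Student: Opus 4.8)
The plan is to mirror the evaluation strategy of Corollary~\ref{cor:mu4-addition-complexity}, but to select the \emph{second} addition law of Theorem~\ref{thm:nonsplit-mu4-addition-laws}, which contains no occurrence of the constant $s$:
$$
\big(X_0 X_1 Y_0 Y_3 + X_2 X_3 Y_1 Y_2,\ (X_1 Y_0 + X_3 Y_2)^2,\ X_0 X_3 Y_2 Y_3 + X_1 X_2 Y_0 Y_1,\ (X_0 Y_3 + X_2 Y_1)^2\big).
$$
Since the fourth law is equivalent to the second under a coordinate permutation, the identical count applies to it; for generic inputs this law defines the sum, its exceptional divisor being a proper subvariety by Theorem~\ref{thm:nonsplit-mu4-addition-laws}.

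First I would introduce the four \emph{off-diagonal} base products
$$
A = X_1 Y_0,\quad B = X_3 Y_2,\quad C = X_0 Y_3,\quad D = X_2 Y_1,
$$
computed with $4\Mul$, and observe that in characteristic $2$ the four coordinates regroup as
$$
(AC + BD,\ (A+B)^2,\ AD + BC,\ (C+D)^2).
$$
The two squared coordinates follow from $(u+v)^2 = u^2 + v^2$ in characteristic $2$, while the mixed coordinates come from the factorizations $X_0 X_1 Y_0 Y_3 = AC$, $X_2 X_3 Y_1 Y_2 = BD$, $X_0 X_3 Y_2 Y_3 = BC$, and $X_1 X_2 Y_0 Y_1 = AD$. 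I would then compute $(A+B)^2$ and $(C+D)^2$ with $2\Sqr$ (reusing the sums $A+B$ and $C+D$), and recover the mixed coordinates by the Karatsuba identity
$$
(A+B)(C+D) = (AC + BD) + (AD + BC),
$$
valid in characteristic $2$: computing $AC$, $BD$, and $(A+B)(C+D)$ with $3\Mul$ yields $AC + BD$ by addition and $AD + BC = (A+B)(C+D) + (AC + BD)$. Summing $4\Mul$ for the base products, $2\Sqr$ for the squares, and $3\Mul$ for the mixed terms gives $7\Mul + 2\Sqr$, with no multiplication by $s$.

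The one point requiring verification --- and precisely the reason the constant drops out --- is the regrouping of the eight monomials of the addition law as products of pairs among $\{A,B,C,D\}$, in contrast to the diagonal products $Z_i = X_i Y_i$ used in the split case of Corollary~\ref{cor:mu4-addition-complexity}. In the split form the two constant multiplications were forced by the scalar $c$ multiplying the first and third coordinates; here those coordinates are instead the bare squares $(A+B)^2$ and $(C+D)^2$, so the $2\mul_c$ simply disappear. Once this elementary bookkeeping is confirmed, the complexity count is exactly that of Corollary~\ref{cor:mu4-addition-complexity} minus the two scalar multiplications, as claimed.
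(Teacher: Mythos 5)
Your proof is correct and follows essentially the same route as the paper: the paper evaluates the fourth (last) addition law via the off-diagonal products $(Z_0,Z_1,Z_2,Z_3)=(X_0Y_1,X_1Y_2,X_2Y_3,X_3Y_0)$ and then reuses the Karatsuba-based count of Corollary~\ref{cor:mu4-addition-complexity}, while you evaluate the second law with the analogous products $X_1Y_0,X_3Y_2,X_0Y_3,X_2Y_1$ --- the two laws being, as you note, equivalent under a coordinate permutation. Your explicit regrouping of the eight monomials and the $4\Mul+3\Mul+2\Sqr$ tally match the paper's intended computation exactly.
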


\begin{proof}
After evaluating $(Z_0,Z_1,Z_2,Z_3) = (X_0 Y_1, X_1 Y_2, X_2 Y_3, X_3 Y_0)$ 
in the last addition law, the algorithm follows that of 
Corollary~\ref{cor:mu4-addition-complexity}.
\qed
\end{proof}


\begin{thebibliography}{99}
\bibitem{BernsteinLange-Edwards}
D.~J.~Bernstein, T.~Lange,
Faster addition and doubling on elliptic curves.
{\it Advances in cryptology---ASIACRYPT 2007},
{\it Lecture Notes in Computer Science}, {\bf 4833}, 29--50, 2007.

\bibitem{BernsteinEtAl-TwistedEdwards}
D.~J.~Bernstein, P.~Birkner, M.~Joye, T.~Lange, C.~Peters, 
Twisted Edwards curves. 
{\it Progress in cryptology -- AFRICACRYPT 2008}, 
{\it Lecture Notes in Computer Science}, {\bf 5023}, 389--405, 2008.

\bibitem{BernsteinLange-Completed}
D.~J.~Bernstein, T.~Lange, 
A complete set of addition laws for incomplete Edwards curves.
{\it J.\ Number Theory}, {\bf 131}, 858--872, 2011. 

\bibitem{BinaryEdwards}
D.~J.~Bernstein, T.~Lange, R.~Rezaeian Farashahi, 
Binary Edwards curves. 
{\it Cryptographic hardware and embedded systems (CHES 2008, Washington, D.C.)},
{\it Lecture Notes in Computer Science}, {\bf 5154}, 244--265, 2008. 

\bibitem{BernsteinKohelLange}
D.~J.~Bernstein, D.~Kohel, and T.~Lange.
Twisted Hessian curves, unpublished 2009.

\bibitem{EFD}
D.~J.~Bernstein, T.~Lange,
Explicit-formulas database, 2012.
URL: \url{http://www.hyperelliptic.org/EFD/}

\bibitem{BosmaLenstra}
W.~Bosma and H.~W. Lenstra, Jr.
Complete systems of two addition laws for elliptic curves.
{\em J. Number Theory}, {\bf 53} (2), 229--240, 1995.

\bibitem{ChudnovskyBrothers}
D.~V.~Chudnovsky and G.~V.~Chudnovsky.
Sequences of numbers generated by addition in formal groups and new primality and factorization tests.
{\it Adv. in Appl. Math.}, {\bf 7}, (4), 385--434, 1986.

\bibitem{Coron1999}
J.-S. Coron,
Resistance against differential power analysis for elliptic curve cryptosystems.
{\it Cryptographic Hardware and Embedded Systems (CHES 1999)},
{\it Lecture Notes in Computer Science}, {\bf 1717}, 292--302, 1999.

\bibitem{Diao}
O.~Diao,
{\it Quelques aspects de l'arithmétique des courbes hyperelliptiques de genre 2},
Ph.D. thesis, Universit{\'e} de Rennes, 2011.

\bibitem{Edwards}
H.~Edwards.
A normal form for elliptic curves.
{\it Bulletin of the American Mathematical Society}, {\bf 44}, 393--422, 2007.

\bibitem{GaudryLubicz}
P.~Gaudry and D.~Lubicz,
The arithmetic of characteristic 2 Kummer surfaces and of elliptic Kummer lines.
{\it Finite Fields and Their Applications}, {\bf 15}, 2, 246--260, 2009. 

\bibitem{Hisil-EdwardsRevisited}
H.~Hisil, K.~K.-H.~Wong, G.~Carter, E.~Dawson, 
Twisted Edwards curves revisited. 
{\it Advances in cryptology -- ASIACRYPT 2008},
{\it Lecture Notes in Computer Science}, {\bf 5350}, 326--343, 2008. 



\bibitem{JoyeReza}
M.~Joye and R.~Rezaeian Farashahi, 
Efficient Arithmetic on Hessian Curves. 
{\it Public Key Cryptography (PKC 2010, Paris)},
{\it Lecture Notes in Computer Science}, {\bf 6056}, 243--260, 2010. 

\bibitem{JoyeYen}
M.~Joye and S.-M.~Yen,
The Montgomery Powering Ladder.
{\it Cryptographic hardware and embedded systems (CHES 2002)},
{\it Lecture Notes in Computer Science}, {\bf 2523}, 291--302, 2003.

\bibitem{KimKim}
K.~H.~Kim, S.~I.~Kim, 
A New method for speeding up arithmetic on elliptic curves over binary fields, 2007. 
URL: \url{http://eprint.iacr.org/2007/181}.

\bibitem{Kohel}
D.~Kohel, Addition law structure of elliptic curves. 
{\it Journal of Number Theory} {\bf 131}, Issue 5, 894--919, 2011.

\bibitem{AGCT2011}
D.~Kohel, 
A normal form for elliptic curves in characteristic 2. 
{\it Arithmetic, Geometry, Cryptography and Coding Theory, (AGCT 2011, Luminy)}, 
talk notes, 15 March 2011. 


\bibitem{Echidna}
D.~Kohel et al., Echidna algorithms, v.3.0, 2012.
URL: \url{http://echidna.maths.usyd.edu.au/echidna/index.html}

\bibitem{LangeRuppert}
H.~Lange and W.~Ruppert.
Complete systems of addition laws on abelian varieties.
{\em Invent. Math.}, {\bf 79} (3), 603--610, 1985.

\bibitem{Magma}
{\it Magma Computational Algebra System},
Computational Algebra Group, University of Sydney, 2012. 
URL: \url{http://magma.maths.usyd.edu.au/}.

\bibitem{Milne-AV}
J.~S.~Milne, {\it Abelian Varieties}, version 2.00, 2012. 
URL: \url{http://www.jmilne.org/math/CourseNotes/av.html}

\bibitem{Montgomery}
P.~L.~Montgomery, 
Speeding the Pollard and elliptic curve methods of factorization, 
{\it Mathematics of Computation}, {\bf 48} 243-–264, 1987.

\bibitem{Stam}
M.~Stam.
On Montgomery-like representations for elliptic curves over $GF(2^k)$,
{\it Public Key Cryptography (PKC 2003, Miami)},
{\it Lecture Notes in Computer Science}, {\bf 2567}, 240--253, 2003.

\end{thebibliography}
\end{document}